\newtheorem{thm}{Theorem}
\newtheorem{prop}[thm]{Proposition}
\newtheorem{lem}[thm]{Lemma}
\newtheorem{cor}[thm]{Corollary}
\theoremstyle{definition}
\newtheorem{Def}[thm]{Definition}
\newtheorem{Not}[thm]{Notation}
\theoremstyle{remark}
\newtheorem{rem}[thm]{Remark}
\newtheorem{exmp}[thm]{Example}
\numberwithin{equation}{section}
\numberwithin{thm}{section}
\newcommand{\ds}{\displaystyle}
\newenvironment{proofof}[1]{\medskip\noindent
   \textbf{Proof of #1:} }{\hfill $\square$\par\medskip}
\DeclareRobustCommand{\stirling}{\genfrac\{\}{0pt}{}}
\title[Multivariate Juggling Probabilities]{
  Multivariate Juggling Probabilities}
\author[A. Ayyer, J. Bouttier, S. Corteel, and F. Nunzi]{Arvind Ayyer$^1$, J\'er\'emie Bouttier$^{2,3}$, Sylvie Corteel$^4$ and Fran\c{c}ois Nunzi$^4$}
\date{\today}
\email{arvind@math.iisc.ernet.in}
\email{jeremie.bouttier@cea.fr}
\email{corteel@liafa.univ-paris-diderot.fr}
\email{fnunzi@liafa.univ-paris-diderot.fr}
\address{$^1$ Department of Mathematics, Indian Institute of Science,\\ Bangalore - 560012, India}
\address{$^2$ Institut de Physique Th\'eorique, CEA, IPhT, 91191 Gif-sur-Yvette, France, CNRS URA 2306}
\address{$^3$ D\'epartement de Math\'ematiques et Applications, \'Ecole normale sup\'erieure,\\ 45 rue d'Ulm, F-75231 Paris Cedex 05}
\address{$^4$ LIAFA, CNRS et Universit\'e Paris Diderot, Case 7014, F-75205 Paris Cedex 13}
\thanks{The authors are partially funded by ANR grants ANR-08-JCJC-0011 and 
ANR 12-JS02-001-01 and by the projet \'Emergences Combinatoire \`a Paris}
\keywords{Markov chain, Combinatorics, Juggling}
\begin{document}

\begin{abstract}
We consider refined versions of Markov chains related to juggling introduced by Warrington. We further generalize the construction to juggling with 
arbitrary heights as well as infinitely many balls, which are expressed more succinctly in terms of Markov chains on integer partitions. 
In all cases, we give explicit product formulas for the stationary probabilities. The normalization factor in one case can be explicitly written as a homogeneous symmetric polynomial.
We also refine and generalize enriched Markov chains on set partitions. Lastly, we prove that in one case, the stationary
distribution is attained in bounded time.
\end{abstract}

\maketitle

\section{Introduction}
Although juggling as a human endeavour has been around since time
immemorial, it is fairly recently that mathematicians have taken an
active interest in exploring the field. Combinatorialists became
interested in juggling towards the end of the last century after an
article in the \emph{Amer. Math. Monthly} by Buhler, Eisenbud, Graham
and Wright \cite{Buhleretal}, where they enumerate what they call {\em
  juggling sequences} and relate it to other known combinatorial
structures.  Since then, their results have been $q$-ified
\cite{EhrenborgReaddy} and further refined in various ways
\cite{Stadler,Stadler2,ChungGraham1,ChungGraham2,ButlerGraham}. Other
connections between juggling and mathematics appear for instance in
algebraic geometry \cite{DevadossMugno,KnutsonLamSpeyer}.  A
mathematical history of juggling is given in the fascinating book by
Polster \cite{Polster}.

Probabilists, on the other hand, are newer to this game, possibly
because no one is able to perform totally random juggling. Coincidentally, it
was another popular article in the \emph{Amer. Math. Monthly} by
Warrington \cite{Warrington} on natural stochastic models inspired by
juggling that got the attention of a few other members of the
community.  Exact combinatorial formulas for the stationary
distribution of these models were found, and proved by a beautiful
argument involving an auxiliary model defined on a larger state space
whose elements may be viewed as set partitions, for which the
stationary distribution is the uniform distribution.

The simplest model considered by Warrington, where the number of balls
is conserved, has been generalized by Leskel\"a and Varpanen as the
so-called Juggler's Exclusion Process (JEP), where the balls can be
thrown arbitrarily high so that the state space is infinite
\cite{LeskelaVarpanen}. These authors showed that the JEP converges to
a (unique) stationary distribution under very mild assumptions, could
obtain an exact expression for this stationary distribution in the
particular case of a ``memoryless'' (geometric) height distribution,
and noted an intriguing phenomenon of ``ultrafast convergence'' in
this case. More recently, a $q$-deformation of Warrington's original
finite model was considered by Engstr\"om, Leskel\"a and Varpanen
\cite{EngstromLeskelaVarpanen}, who also obtained exact expressions
for the stationary distribution via an enriched chain formulated in
terms of rook placements (in bijection with some set partitions).

In this paper, we provide multivariate generalizations of all the models
introduced in \cite{Warrington}, namely \emph{juggling, add-drop
  juggling} and \emph{annihilation juggling}. Furthermore, in the case
of the juggling model with a conserved number of balls, we investigate
the limiting case where balls can be thrown arbitrarily high, which
corresponds to the so-called infinite juggling model suggested by
Warrington. We also consider the limiting case where the number of
balls tends to infinity. In all these cases, we obtain an exact
formula for the stationary distribution (in particular, in the
case where balls can be thrown arbitrarily high, 
we find an ``exactly solvable'' instance of the general
JEP of \cite{LeskelaVarpanen} with countably many parameters) and the normalization
factor. In one case the normalization factor can be explicitly written as a 
homogeneous symmetric polynomial. 
Our proofs were obtained using two approaches. The direct approach involves 
guessing the general formulas (for instance by a computation for
small system sizes) and then proving the results
in a straightforward manner by considering the juggling process itself.
The other approach is more combinatorial, and consists in introducing an
enriched chain whose stationary distribution is simpler, and which
yields the original chain by a projection or ``lumping'' procedure
on set partitions or words. 

We remark that juggling models (especially the add-drop and annihilation versions) have a natural interpretation in statistical physics, where one could think of the balls as coming from a reservoir of particles. A model in this vein has been proposed recently, see \cite{AritaBouttierKrapivskyMallick}.

The rest of the paper is organized as follows. In
Section~\ref{sec:juggle}, we concentrate on the simplest version of
our model, which we call the Multivariate Juggling Markov Chain
(MJMC).  The model is defined in Section~\ref{subsec:juggle-def}, and
we also discuss the uniqueness of the stationary distribution (a
technicality deemed obvious in \cite{Warrington}).  The expression for
the stationary distribution of the
Markov chain is stated in Section~\ref{subsec:prob-juggle}. For
pedagogical purposes, we decide to provide two independent proofs by
the direct and the combinatorial approaches. The direct proof is given
in Section~\ref{subsec:mjmcpar}, via an interesting
reformulation of the MJMC in terms of integer partitions restricted to
lie within a rectangle. The
combinatorial proof comes in Section~\ref{subsec:enriched} and
involves set partitions with a prescribed number of elements and
blocks.
We then turn to extended models. Extensions to infinite state spaces
are considered in Section~\ref{sec:infextensions}: the case of a
finite number of balls but \emph{unbounded} heights (UMJMC) is
discussed in Section~\ref{subsec:unb}, while the case of an
\emph{infinite} number of balls (IMJMC) is considered in
Section~\ref{subsec:inf}. In both cases, we find the stationary
measure by the direct approach. Extensions to a fluctuating number of
balls (but with a finite state space) are considered in
Section~\ref{sec:flucext}: we provide the multivariate extension of
the add-drop and the annihilation models introduced in
\cite{Warrington}, in the respective
Sections~\ref{subsec:juggle-adddrop} and
\ref{subsec:juggle-annihilation}. These models have the same
transition graph, only the transitions probabilities differ.
In both cases we find the
stationary distribution by the combinatorial approach. In the case of
the annihilation model, we further observe the
interesting property that the stationary distribution is attained in
bounded time.
We end with some remarks and questions for future study in Section~\ref{sec:conclusion}.

The claims of this paper can be verified by downloading the Maple${}^{\text{TM}}$ program \texttt{RandomJuggling} either from the \texttt{arXiv} source or the first author's (A.A.) homepage.

\section{The finite Multivariate Juggling Markov Chain}
\label{sec:juggle}

\subsection{Definition}  
\label{subsec:juggle-def}

In this section, we introduce our juggling model in the simplest
setting, i.e.~a Markov chain on a finite state space.  We start by
explaining the model in colloquial terms, and refer to
\cite{Warrington} for further motivation.  Consider a person, called
\emph{Magnus} with no loss of generality, who is juggling with a fixed
finite number $\ell$ of balls. Time is discretized in, say, steps of
one second and we assume that, at each second, Magnus is able to catch
at most one ball, and then throws it back immediately. Besides this
limitation Magnus juggles \emph{perfectly}, i.e.~in such a way that the
ball will always return to him after some time (an imperfect juggler
could drop a ball or throw it in a wrong direction, for
instance). Magnus controls the velocity at which he sends the ball,
which determines how long it will take for the ball to return to him.
We suppose for now that the launch velocity is bounded or, in other
words, that the number of seconds before the ball returns to Magnus is
bounded by an integer $h$.

Ignoring further spatial constraints, a simplified description of the
state of the balls at a given time consists in associating to each
ball the (integer) number of seconds remaining before it is caught by
Magnus. This is now known as the \emph{siteswap} notation.
Of course, to be able to juggle for an indefinite amount of
time, Magnus must choose the successive launch velocities in such a
way that no two balls arrive to him at the same time. Thus the numbers
associated to different balls shall be distinct and, treating the
balls as indistinguishable, there are $\binom{h}{\ell}$ possible ball
states. It is convenient to think of a state as a configuration of
$\ell$ non overlapping particles on a one-dimensional lattice with $h$
sites, where the $i$-th site (read from the left) is occupied if and
only if a ball is scheduled to arrive $i$ seconds in the
future. However, beware that sites do no correspond to actual spatial
positions, but to the ``remaining flight times'' of the balls.
We denote by $k=h-\ell$ the number of empty (unoccupied) sites.

In this language, the time evolution of a state is easy to describe:
at each time step, all particles are moved one site to the left. If
there is no particle on the first site (i.e.~Magnus catches no ball),
then nothing else has to be done. Otherwise the particle on the first
site, which would exit the lattice if moved to the left, is instead taken
away and \emph{reinserted} at one of the $k+1$ available (empty) sites
on the lattice (determined by the launch velocity chosen by Magnus).
This defines the transition graph of our model, illustrated on
Figure~\ref{fig:example_markov_juggling} for $h=4$ and $\ell=k=2$
(ignoring edge labels for now).

We now assume that Magnus juggles at random: each reinsertion is made
randomly at one of the $k+1$ available sites. In our model, we assume
that the reinsertion is made at the $i$-th available site (read from
the left) with probability $x_{i-1}$, independently of the past, so
that our model is a Markov chain. Here, $x_0,\ldots,x_k$ are fixed
nonnegative real numbers such that $x_0+\cdots+x_k=1$. This defines
the \emph{Multivariate Juggling Markov Chain} (MJMC), which
generalizes the model considered in \cite{Warrington}, obtained by
taking $x_0=\cdots=x_k=1/(k+1)$, but is a particular case of the
general ``Juggling Exclusion Process'' defined in
\cite{LeskelaVarpanen}: beyond the extension to infinitely many empty
sites which we will discuss in the next section, the main difference
is that, in the model of Leskel\"a and Varpanen, the $x_i$'s are allowed to
depend on the current state.

\begin{figure}[h]
\begin{center}
\begin{tikzpicture} [>=triangle 45]
\draw (3.0,-1) node[circle,inner sep=2pt,draw] {};
\draw (3.3,-1) node[circle,inner sep=2pt,draw] {};
\draw (3.6,-1) node[circle,inner sep=2pt,fill=black,draw] {};
\draw (3.9,-1) node[circle,inner sep=2pt,fill=black,draw] {};

\draw (3.0,1) node[circle,inner sep=2pt,fill=black,draw] {};
\draw (3.3,1) node[circle,inner sep=2pt,draw] {};
\draw (3.6,1) node[circle,inner sep=2pt,draw] {};
\draw (3.9,1) node[circle,inner sep=2pt,fill=black,draw] {};

\draw (0.0,3) node[circle,inner sep=2pt,fill=black,draw] {};
\draw (0.3,3) node[circle,inner sep=2pt,fill=black,draw] {};
\draw (0.6,3) node[circle,inner sep=2pt,draw] {};
\draw (0.9,3) node[circle,inner sep=2pt,draw] {};

\draw (6.0,3) node[circle,inner sep=2pt,draw] {};
\draw (6.3,3) node[circle,inner sep=2pt,fill=black,draw] {};
\draw (6.6,3) node[circle,inner sep=2pt,fill=black,draw] {};
\draw (6.9,3) node[circle,inner sep=2pt,draw] {};

\draw (3.0,5) node[circle,inner sep=2pt,fill=black,draw] {};
\draw (3.3,5) node[circle,inner sep=2pt,draw] {};
\draw (3.6,5) node[circle,inner sep=2pt,fill=black,draw] {};
\draw (3.9,5) node[circle,inner sep=2pt,draw] {};

\draw (3.0,7) node[circle,inner sep=2pt,draw] {};
\draw (3.3,7) node[circle,inner sep=2pt,fill=black,draw] {};
\draw (3.6,7) node[circle,inner sep=2pt,draw] {};
\draw (3.9,7) node[circle,inner sep=2pt,fill=black,draw] {};

\draw [->] (2.9,4.7) -- node [right]  {$x_0$} (1.1,3.0) ;
\draw [->] (4.1,4.7) -- node [right] {$x_1$} (5.7,3.3);
\draw [->] (3.3,5.3) -- node [left] {$x_2$} (3.3,6.7);

\draw [->] (4.1,-0.7) -- node [right] {$1$} (5.9,2.7); 

\draw [->] (3.6,6.7) -- node [right] {$1$} (3.6,5.3);

\draw [->,out=-90,in=-90,looseness=2.5] (6.4,2.7) to node [below] {$1$} (0.4,2.7);

\draw [->] (3.3,0.7) -- node [left] {$x_2$} (3.3,-0.7);
\draw [->] (4.1,1.3) -- node [left] {$x_1$} (5.7,2.7); 
\draw [->] (3.5,1.3) -- node [left] {$x_0$} (3.5,4.7); 

\draw [->,out=135,in=45,looseness=10] (0.3,3.3) to   node[above]  {$x_0$}(0.6,3.3);
\draw [->] (0.9,3.3) -- node [above]  {$x_1$} (2.7,5) ;
\draw [->] (1.1,2.7) -- node [right] {$x_2$} (2.7,1.3);
\end{tikzpicture}
\vspace{-0.5cm}
\caption{The Markov chain with $h=4$ and $\ell=k=2$.}
\label{fig:example_markov_juggling}
\end{center}
\end{figure}
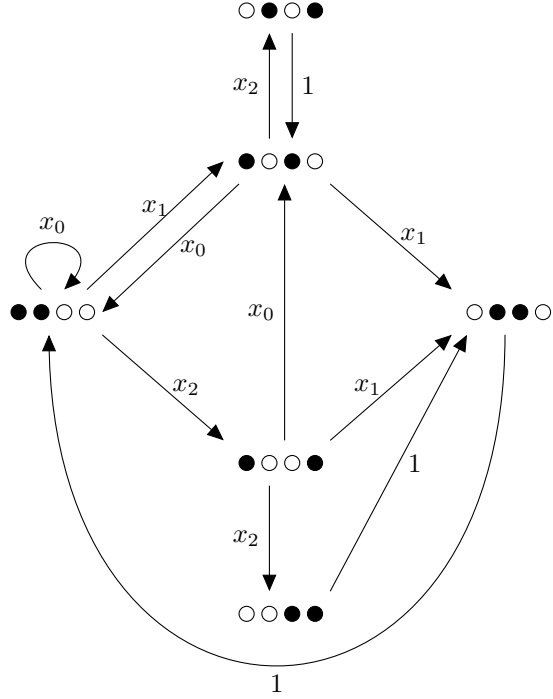

We now provide a more formal mathematical definition of the MJMC.
Following Warrington's notation, let $\mathrm{St}_h$ denote the set
of words of length $h$ on the alphabet $\{\bullet,\circ\}$, and let
$\mathrm{St}_{h,k} \subset \mathrm{St}_h$ be the subset of words
containing exactly $k$ occurrences of $\circ$.
For $A \in \mathrm{St}_{h,k+1}$ and $i \in \{0,\ldots,k\}$, we let
$T_i(A) \in \mathrm{St}_{h,k}$ be the word obtained by replacing the
$(i+1)$-th occurrence of $\circ$ in $A$ by $\bullet$.

\begin{Def} \label{Def:MJMC}
  Given $h,k$ nonnegative integers such that $h \geq k$, and
  $x_0,\ldots,x_k$ nonnegative real numbers such that
  $x_0+\cdots+x_k=1$, the \emph{Multivariate Juggling Markov Chain} is
  the Markov chain on the state space $\mathrm{St}_{h,k}$ for which
  the transition probability from $A=a_1 a_2 \cdots a_{h}$ to $B$ reads
  \begin{equation}
    P_{A,B}=
    \begin{cases}
      1 & \text{if $a_1=\circ$ and $B=a_2 \cdots a_h \circ$,} \\
      x_i & \text{if $a_1=\bullet$ and $B=T_i(a_2 \cdots a_h \circ)$,} \\
      0 & \text{otherwise.}
    \end{cases}
    \label{eq:jugp}
  \end{equation}
\end{Def}

\begin{exmp}
  Figure \ref{fig:example_markov_juggling} illustrates the Markov
  chain in the case $h=4$, $k=2$, and the transition matrix in the
  basis
  \(
  (\bullet \bullet \circ \circ, \bullet \circ \bullet \circ,
  \bullet \circ \circ \bullet , \circ \bullet \bullet \circ, \circ
  \bullet \circ \bullet , \circ \circ \bullet \bullet)
  \) 
  reads
  \begin{equation}
    \begin{pmatrix}
      x_0 & x_1 & x_2 & 0 & 0 & 0\\
      x_0 & 0 & 0 & x_1 & x_2 & 0\\
      0 & x_0 & 0 & x_1 & 0 & x_2\\
      1 & 0 & 0 & 0 & 0 & 0\\
      0 & 1 & 0 & 0 & 0 & 0\\
      0 & 0 & 0 & 1 & 0 & 0
    \end{pmatrix}
  \end{equation}
  Note that $(1,x_1+x_2,x_2,(x_1+x_2)^2,x_2(x_1+x_2),x_2^2)$ is a
  left eigenvector with eigenvalue $1$.
\end{exmp}

To compare our model with that of \cite{LeskelaVarpanen}, note that
$\mathrm{St}_{h,k}$ may be canonically identified with the set of
subsets of $\{1,\ldots,h\}$ with $\ell=h-k$ elements (corresponding to
the positions of the occupied sites). For two such subsets $S$ and
$S'$, the transition probability from $S$ to $S'$ reads
\begin{equation}
  \label{eq:MJMCsubset}
  P_{S,S'} =
  \begin{cases}
    1 & \text{if $S'=S-1$,}\\
    x_i & \text{if $S'=(S-1) \setminus \{0\} \cup \{j\}$ with $i=|\{1,\ldots,j\} \setminus S|$,}\\
    0 & \text{otherwise,}
  \end{cases}
\end{equation}
where $S-1$ is the set whose entries are those of $S$ with 1
subtracted.

\begin{prop} \label{prop:finirred} If $x_0>0$, then the MJMC has a
  unique closed communicating class, whose all states are
  aperiodic. If furthermore $x_k>0$, then the MJMC is irreducible.
\end{prop}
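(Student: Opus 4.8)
The plan is to single out the ``leftmost'' state $w_\star$, namely the word $\bullet^{\ell}\circ^{k}$ with all $\ell=h-k$ occupied sites packed to the left, and show that it carries a loop and is reachable from every state; this already uses only the two extreme reinsertions $T_0$ and $T_k$. First, $w_\star$ carries a loop: its first letter is $\bullet$, and reinserting the caught ball at the first available site (i.e.\ applying $T_0$) returns to $w_\star$, so $P_{w_\star,w_\star}=x_0>0$. Next, every state reaches $w_\star$: consider the \emph{greedy} step that applies $T_0$ whenever the first letter is $\bullet$ and performs the forced shift otherwise; it is a legal transition of positive probability ($1$ if $a_1=\circ$, $x_0>0$ if $a_1=\bullet$). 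One checks directly that it sends $\bullet^{t}\circ Y$ to $\bullet^{t}Y\circ$ when the leading $\bullet$-run has length $t\ge1$, and $\circ Y$ to $Y\circ$ otherwise. Hence the statistic $g(A)=\sum_{i:\,a_i=\bullet}i$ (the sum of the positions of the occupied sites) strictly decreases along each greedy step unless $A=w_\star$, where $g$ attains its global minimum $\binom{\ell+1}{2}$; being integer-valued and bounded below, $g$ must reach that minimum, so iterating the greedy step reaches $w_\star$ in boundedly many steps.

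From these two facts the first assertion follows by standard bookkeeping: the communicating class $\mathcal C$ of $w_\star$ is closed (if $A\in\mathcal C$ and $A\to B$, then $w_\star\to A\to B$ and $B\to w_\star$, so $B\in\mathcal C$); it is the unique closed communicating class (any closed class contains a state, which leads to $w_\star$, hence contains $w_\star$); and all states of $\mathcal C$ are aperiodic because $w_\star$ has a loop and the period is constant on a communicating class. Note this part uses only $x_0>0$.

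For irreducibility it remains, assuming also $x_k>0$, to show that $w_\star$ reaches every state; together with the above this makes the state space a single communicating class. The key point is that the \emph{opposite} greedy step — applying $T_k$ when $a_1=\bullet$, the forced shift otherwise, of probability $1$ or $x_k>0$ — acts on words as the left cyclic shift $\psi:\,a_1\cdots a_h\mapsto a_2\cdots a_h\,a_1$ (reinserting the caught ball at the appended $\circ$ when $a_1=\bullet$), so from any state one reaches all its cyclic rotations. I would then induct on $\ell$ to prove that $w_\star$ reaches all of $\mathrm{St}_{h,h-\ell}$: writing a $\bullet$-initial word as $\bullet W$ with $W$ of length $h-1$ and $\ell-1$ occupied sites, one verifies the two identities $\mu(\bullet W)=\bullet\,\mu(W)$ (for the greedy step $\mu$ above) and $\bullet W\to\bullet\,\psi(W)$ with positive probability (via $T_0$ if $W$ begins with $\circ$, via $T_k$ if $W$ begins with $\bullet$). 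Thus the transitions among $\bullet$-initial words simulate the $\mu$- and $\psi$-transitions of the chain on $h-1$ sites with $\ell-1$ balls; since $w_\star$ on $(h,\ell)$ equals $\bullet$ followed by $w_\star$ on $(h-1,\ell-1)$, the induction hypothesis gives that every $\bullet$-initial word is reachable from $w_\star$, and cyclic shifts $\psi$ then reach every state (each one, having a ball, is a rotation of a $\bullet$-initial word). The base case $\ell\le1$ is immediate.

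The main obstacle is the irreducibility. The statement ``every state reaches $w_\star$'' is a quick monovariant argument, but ``$w_\star$ reaches every state'' requires the extra observation that rightmost reinsertion realises a cyclic shift, together with the inductive reduction to the $(h-1,\ell-1)$ chain; the delicate points are establishing the two identities $\mu(\bullet W)=\bullet\,\mu(W)$ and $\bullet W\to\bullet\,\psi(W)$ and keeping track of which hypothesis ($x_0>0$ or $x_k>0$) is used at each step.
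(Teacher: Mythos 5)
Your proof is correct. The first half (unique closed class and aperiodicity) coincides with the paper's argument: same distinguished state $\bullet^{\ell}\circ^{k}$ with its $x_0$-loop, same greedy descent by always reinserting at the first available site; you merely make the termination explicit via the monovariant $g$, where the paper is content to call it clear. For irreducibility the routes genuinely diverge. The paper reaches an arbitrary target $A=a_1\cdots a_h$ from $\mathcal{E}$ by one explicit path of length exactly $h$: the intermediate state at step $i$ is $\bullet^{\ell-i+n_i}\circ^{k-n_i}a_1\cdots a_i$, and each step is a forced shift, a $T_0$ or a $T_k$ according to whether the leading run of $\bullet$'s is exhausted and whether $a_{i+1}$ is $\circ$ or $\bullet$ — i.e., it builds the suffix of the target letter by letter. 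You instead observe that the $T_k$-greedy step realizes the left cyclic shift and run an induction on $\ell$, simulating the $(h-1,\ell-1)$ chain inside the set of $\bullet$-initial words. Both are valid; the paper's construction is shorter, needs no strengthening of an induction hypothesis (your inductive step implicitly requires that reachability in the smaller chain be witnessed by $\mu$- and $\psi$-steps only — your construction does supply this, but it should be built into the induction statement), and yields the sharper quantitative fact that every state is reached from $\mathcal{E}$ in exactly $h$ steps. Your observation that rightmost reinsertion acts as a cyclic rotation is a pleasant structural byproduct not present in the paper.
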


\begin{proof}
  Assume that $x_0>0$ and let $\mathcal{E}=\bullet^\ell\circ^k$ be the
  ``lowest'' state. It is clear that, starting from any initial state,
  we may obtain $\mathcal{E}$ by a sequence of transitions of
  probabilities $x_0$ or $1$, simply by doing any reinsertion at the
  first available site. Since $\mathcal{E}$ jumps to itself with
  probability $x_0$, there is a unique closed communicating class, and
  all states in this class are aperiodic.

  Assume now that $x_k>0$: to prove irreducibility we simply need to
  show that any state $A=a_1 \cdots a_h \in \mathrm{St}_{h,k}$ may be
  obtained from $\mathcal{E}$. The idea is to build the prefix of $A$
  of length $i$ at the $i$-th step so that we get $A$ in exactly $h$ steps.
  For $i \in \{0,\ldots,h\}$, we let
  $A_i=\bullet^{\ell-i+n_i} \circ^{k-n_i} a_1 \cdots a_i$ where $n_i$
  is the number of occurrences of $\circ$ in $a_1\cdots a_i$, so that
  $A_i \in \mathrm{St}_{h,k}$, with $A_0=\mathcal{E}$ and $A_h=A$.
  For $i<h$, we have
  \begin{equation}
    P_{A_i,A_{i+1}} =
    \begin{cases}
      1 & \text{if $\ell=i-n_i$},\\
      x_0 & \text{if $\ell>i-n_i$ and $a_{i+1}=\circ$},\\
      x_k & \text{if $\ell>i-n_i$ and $a_{i+1}=\bullet$}.\\
    \end{cases}
  \end{equation}
  Note that when $a_1 = \circ$, $A_1 = \mathcal{E}$, but this is allowed
  since there is always a self-loop from $\mathcal{E}$ to itself.
  Thus $A$ may be obtained from $\mathcal{E}$ by a sequence of
  possible transitions.
\end{proof}

\begin{rem}
  When $x_0=0$, the chain may have several closed communicating
  classes. For example, see Figure~\ref{fig:example_markov_juggling} and let 	$x_0=x_1=0$.
\end{rem}

\subsection{Stationary distribution}
\label{subsec:prob-juggle}

From now on we assume $x_0>0$. By Proposition~\ref{prop:finirred}, the
MJMC admits a unique stationary probability distribution. Our main
result is an explicit form for it, given as follows.

\begin{thm} \label{thm:prob-juggle}
  The stationary distribution $\pi$ of the MJMC is given by
  \begin{equation}
    \label{eq:prob-juggle}
    \pi(B) = \frac{1}{Z_{h,k}} \prod_{\substack{i\in \{1,\ldots,h\} \\ b_i = \bullet}}
    \left( x_{E_i(B)} + \cdots + x_k \right),
  \end{equation}
  where $B=b_1 \cdots b_h \in \mathrm{St}_{h,k}$ and $E_i(B) = \#
  \{j<i | b_j = \circ \}$, and where $Z_{h,k} \equiv
  Z_{h,k}(x_0,\ldots,x_k)$ is determined by the condition that $\pi$
  be a probability distribution.
\end{thm}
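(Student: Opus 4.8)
The plan is to verify directly that the measure $\pi$ given by the product formula in \eqref{eq:prob-juggle} satisfies the stationarity equations $\pi P = \pi$, i.e.\ that for every $B \in \mathrm{St}_{h,k}$ we have $\sum_{A} \pi(A) P_{A,B} = \pi(B)$, where the sum runs over the (at most two kinds of) predecessors $A$ of $B$ under the transition graph. Since by Proposition~\ref{prop:finirred} the stationary distribution is unique when $x_0>0$, checking the balance equations is enough; the normalization $Z_{h,k}$ then takes care of itself. To make the bookkeeping manageable I would first analyze the structure of the predecessors of a given state $B=b_1\cdots b_h$. From the transition rules in \eqref{eq:jugp}, a state $A=a_1\cdots a_h$ maps to $B$ only if $a_2\cdots a_h$ is a ``shift'' of $B$: more precisely, either $a_1=\circ$ and $B=a_2\cdots a_h\circ$ (forcing $b_h=\circ$ and $A=\circ b_1\cdots b_{h-1}$), or $a_1=\bullet$ and $B = T_i(a_2\cdots a_h\circ)$ for some $i$, which means $b_h$ can be anything but $A=\bullet b_1\cdots b_{h-1}$ with one designated $\bullet$ of $B$ in positions $1,\dots,h-1$ reverted to the $\circ$ that sat at the reinsertion slot. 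So the predecessors are obtained by prepending a letter to $b_1\cdots b_{h-1}$ and possibly toggling one bit; this gives a clean finite sum to evaluate.

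The key computational step is then to compare $\pi(A)$ with $\pi(B)$ for each such predecessor $A$. The crucial observation is how the statistic $E_i$ behaves under the left-shift: if $A = c\, b_1\cdots b_{h-1}$ with $c \in\{\bullet,\circ\}$, then for the $\bullet$'s inherited from $b_1,\dots,b_{h-1}$ the quantity $x_{E_\bullet} + \cdots + x_k$ in $\pi(A)$ differs from the corresponding factor in $\pi(B)$ in a controlled way depending only on $c$ and on where the toggled bit sits. I expect that, after dividing through by the common factors, the balance equation at $B$ reduces to a single polynomial identity in $x_0,\dots,x_k$ — essentially a telescoping/partial-fraction-type identity of the form $\sum_{i} x_i \cdot (\text{tail sums}) = (\text{tail sum})$, reflecting that the weights $x_{E_i(B)}+\cdots+x_k$ are built precisely so that inserting a ball at available site $i$ with probability $x_{i-1}$ reconstitutes the weight. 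I would split the verification into the two cases $b_1=\bullet$ and $b_1=\circ$ (equivalently, according to whether $B$ has a predecessor of the first type), since the shape of the predecessor set and hence of the identity differs.

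The main obstacle I anticipate is purely organizational rather than deep: correctly tracking the index shifts in $E_i$ when the first letter is deleted and a new letter is prepended, and making sure the ``toggled'' $\circ\to\bullet$ position is indexed consistently with the $T_i$ notation (the $(i+1)$-th $\circ$, the $i$-th available site, etc.). Getting a sign or an off-by-one wrong here collapses the whole identity, so I would set up careful notation for the positions of the $\circ$'s in $B$ and express every weight factor in terms of those positions before attempting to collect terms. Once the indexing is pinned down, the remaining algebra should be a short induction-free manipulation: factor out $\pi(B)$ (or the relevant partial product), and recognize the leftover sum over reinsertion sites as a collapsing sum equal to~$1$. As a sanity check I would confirm the formula against the $h=4$, $k=2$ transition matrix and its left eigenvector $(1,x_1+x_2,x_2,(x_1+x_2)^2,x_2(x_1+x_2),x_2^2)$ displayed in the example, which already exhibits exactly the product structure being claimed.
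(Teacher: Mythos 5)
Your plan is essentially the paper's first (direct) proof: in Section~\ref{subsec:mjmcpar} the authors verify the balance equation predecessor-by-predecessor, after first translating words into integer partitions purely for notational convenience, and the collapsing sum you anticipate is exactly the telescoping identity \eqref{eq:finstatverif} obtained by writing $x_{\lambda_j}=y_{\lambda_j}-(y_{\lambda_j}-x_{\lambda_j})$. One indexing slip to fix before executing: the dichotomy governing the predecessor structure is on the \emph{last} letter $b_h$ of $B$, not on $b_1$ --- the probability-$1$ predecessor $\circ\, b_1\cdots b_{h-1}$ exists precisely when $b_h=\circ$, whereas if $b_h=\bullet$ the unique predecessor is $\bullet\, b_1\cdots b_{h-1}$ reached with probability $x_k$ (the case $\lambda_1=k$ in the paper's partition language); with that corrected, the rest of your outline goes through as described.
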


We will provide two proofs of this theorem, as they both lead to
interesting generalizations to be studied later in Sections
\ref{sec:infextensions} and \ref{sec:flucext}. The first proof,
presented in Section~\ref{subsec:mjmcpar}, is a rather direct
computational check that \eqref{eq:prob-juggle} indeed defines a
stationary measure of the MJMC (but we find it clearer to first
reformulate the chain in terms of integer partitions). The second proof,
presented in Section~\ref{subsec:enriched}, is more combinatorial and
generalizes the approach of \cite{Warrington}: it consists in defining
an ``enriched'' Markov chain on a larger state space, where the
stationary probability of a given state will be given by a monomial in
$x_0,\ldots,x_k$. The MJMC is then recovered by lumping together
states of the enriched chain, which explains the form of
\eqref{eq:prob-juggle}.

\noindent
Before proceeding to the proofs, let us discuss a few interesting cases.

\begin{exmp} \label{exmp:unif}
  If the reinsertion is made uniformly among all $k+1$ available
  sites, i.e.\ if we take $x_0=\cdots=x_k=1/(k+1)$, then we recover
  \cite[Theorem 1]{Warrington} from Theorem~\ref{thm:prob-juggle} (see
  Corollary~\ref{cor:Zexplic} below for a short rederivation of the
  normalization factor).
\end{exmp}

\begin{exmp} \label{exmp:elv}
  If the reinsertion site is chosen according to a geometric
  distribution of parameter $q$ conditioned to be smaller than or
  equal to $k+1$, i.e.\ if we take $x_i=(1-q)q^i/(1-q^{k+1})$,
  $i=0,\ldots,k$, then we recover the so-called bounded geometric JEP
  considered in \cite{EngstromLeskelaVarpanen}. It is a simple exercise
  to check that we recover \cite[Theorem 2.1]{EngstromLeskelaVarpanen}
  from Theorem~\ref{thm:prob-juggle} (see also
  Corollary~\ref{cor:Zexplic} below for the normalization factor).
\end{exmp}

\begin{exmp} \label{exmp:truncgeom}
  A slight variant of the previous example consists in picking the
  reinsertion site according to a ``truncated'' geometric distribution
  of parameter $q$, i.e.\ taking $x_i=(1-q)q^i$ for $i=0,\ldots,k-1$
  and $x_k=q^k$. In that case, the stationary probability of $B \in
  \mathrm{St}_{h,k}$ reads
  \begin{equation}
    \label{eq:truncgeom}
    \pi(B) = \frac{1}{\binom{h}{k}_q}
    \prod_{\substack{i\in \{1,\ldots,h\} \\ b_i = \bullet}} q^{E_i(B)}
  \end{equation}
  (see again Corollary~\ref{cor:Zexplic} below for the
  normalization factor).
\end{exmp}

We now move on to discuss properties of the normalization factor $Z_{h,k}$.
Observe first that the expression
\begin{equation}
  Z_{h,k}(x_0,\ldots,x_k)= \sum_{B \in \mathrm{St}_{h,k}} \prod_{\substack{i\in \{1,\ldots,h\} \\ b_i = \bullet}}
  \left( x_{E_i(B)} + \cdots + x_k \right)\label{eq:Zworddef}
\end{equation}
allows us to define it for any $x_0,\ldots,x_k$ (not necessarily positive
or subject to the condition $x_0+\cdots+x_k=1$) and $h,k \geq 0$. It
vanishes for $h<k$ and is otherwise a homogeneous polynomial of
degree $\ell=h-k$. 

\begin{prop}
  \label{prop:Zhform}
  We have
  \begin{equation}
    \label{eq:Zhform}
    Z_{h,k} = \sum_{0 \leq i_1 \leq \cdots \leq i_\ell \leq k} y_{i_1} \cdots y_{i_\ell}
    = h_\ell(y_0,\ldots,y_k)
  \end{equation}
  with $h_\ell$ the complete homogeneous symmetric polynomial of
  degree $\ell=h-k$, and
  \begin{equation}
    \label{eq:ydef}
    y_m = \sum_{j=m}^k x_j, \qquad m=0,\ldots,k.
  \end{equation}
\end{prop}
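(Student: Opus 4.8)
The plan is to establish \eqref{eq:Zhform} by a bijective/combinatorial argument that matches the defining sum \eqref{eq:Zworddef} term-by-term with the monomials of $h_\ell(y_0,\ldots,y_k)$. First I would unpack the right-hand side: $h_\ell(y_0,\ldots,y_k)$ is the sum over weakly increasing sequences $0\le i_1\le\cdots\le i_\ell\le k$ of the monomial $y_{i_1}\cdots y_{i_\ell}$, and each such sequence is equivalently a partition $\lambda=(\lambda_1\ge\cdots\ge\lambda_\ell)$ with $0\le\lambda_j\le k$ (reindexing $\lambda_j=i_{\ell+1-j}$, or some such). So the RHS is $\sum_{\lambda\subseteq(k^\ell)} \prod_j y_{\lambda_j}$, a sum over integer partitions fitting in an $\ell\times k$ box — precisely the combinatorial objects the authors say the MJMC can be reformulated in terms of. On the left-hand side, a word $B=b_1\cdots b_h\in\mathrm{St}_{h,k}$ with its $\ell$ bullets at positions $p_1<\cdots<p_\ell$ contributes $\prod_{r=1}^\ell (x_{E_{p_r}(B)}+\cdots+x_k) = \prod_{r=1}^\ell y_{E_{p_r}(B)}$, where $E_{p_r}(B)$ counts the open sites strictly before the $r$-th bullet. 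The sequence $(E_{p_1}(B),\ldots,E_{p_\ell}(B))$ is weakly increasing and takes values in $\{0,\ldots,k\}$, and the classical lattice-path bijection between words in $\mathrm{St}_{h,k}$ and partitions in the $\ell\times k$ box sends $B$ exactly to the partition whose parts are these values. Hence the map $B\mapsto(E_{p_r}(B))_r$ is a bijection $\mathrm{St}_{h,k}\to\{\,\lambda:\lambda\subseteq(k^\ell)\,\}$, and under it the summand of \eqref{eq:Zworddef} becomes the summand of \eqref{eq:Zhform}.

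Carrying this out concretely, the key steps in order are: (1) rewrite each bracket $x_{E_i(B)}+\cdots+x_k$ in \eqref{eq:Zworddef} as $y_{E_i(B)}$ using the definition \eqref{eq:ydef}; (2) observe that reading the word left to right, each $\bullet$ contributes one factor $y_m$ where $m$ is the number of $\circ$'s seen so far, so $Z_{h,k}$ is a sum over all $\{\bullet,\circ\}$-words with $k$ open slots of a product of $y$'s indexed by the running count of $\circ$'s; (3) identify this with the standard generating function of partitions in a box: a word in $\mathrm{St}_{h,k}$ is the boundary lattice path of a Young diagram $\lambda\subseteq(k^\ell)$, and the multiset $\{E_{p_r}(B)\}_{r=1}^\ell$ is exactly the multiset of parts of $\lambda$ (each $\circ$ is a horizontal step, each $\bullet$ a vertical step, and the running $\circ$-count records the column depth); (4) conclude $Z_{h,k}=\sum_{\lambda\subseteq(k^\ell)}\prod_j y_{\lambda_j}=h_\ell(y_0,\ldots,y_k)$.

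Alternatively — and this may be cleaner to write — one can skip the explicit bijection and argue by a recursion on $h$. Splitting the sum \eqref{eq:Zworddef} according to the last letter $b_h$: if $b_h=\circ$ the prefix $b_1\cdots b_{h-1}$ ranges over $\mathrm{St}_{h-1,k-1}$ and the contribution is $Z_{h-1,k-1}(x_0,\ldots,x_{k-1})$ with the $x$'s unshifted — but careful bookkeeping of the indices $E_i$ is needed; if $b_h=\bullet$ then $E_h(B)=k$ always (all $k$ open sites precede it), contributing a factor $y_k=x_k$ times $Z_{h-1,k}$. This yields $Z_{h,k}=x_k Z_{h-1,k}+Z_{h-1,k-1}'$, which one matches against the classical recursion $h_\ell(y_0,\ldots,y_k)=y_k\,h_{\ell-1}(y_0,\ldots,y_k)+h_\ell(y_0,\ldots,y_{k-1})$ for complete homogeneous symmetric polynomials, together with the base cases $Z_{h,0}=1$ and $Z_{k,k}=1$ (and $Z_{h,k}=0$ for $h<k$). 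The main obstacle in either route is the index bookkeeping: one must be precise that the brackets' lower indices $E_i(B)$, which are defined relative to the whole word $B$, behave correctly under removing a letter or under the path bijection — in particular that they only depend on the number of $\circ$'s to the left of each $\bullet$, not on their positions, so that the reindexing $x_{E_i}+\cdots+x_k=y_{E_i}$ genuinely decouples the sum into the product form. Once that is pinned down, both proofs are short.
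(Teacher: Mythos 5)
Your proposal is correct. Your primary route --- the bijection sending $B\in\mathrm{St}_{h,k}$ to the partition whose parts are the values $E_i(B)$ at the bullet positions, after rewriting each bracket of \eqref{eq:Zworddef} as $y_{E_i(B)}$ --- is precisely what the paper calls ``the most combinatorial explanation'' and records in Remark~\ref{rem:Zparform}; you just run it directly on the definition \eqref{eq:Zworddef} instead of reaching it through Theorem~\ref{thm:finstat}, which makes it self-contained. The proof the paper actually writes down is different in detail: an induction on the \emph{first} letter via Lemma~\ref{lem:Zsetrec}, namely $Z_{h,k}(x_0,\ldots,x_k)=Z_{h-1,k-1}(x_1,\ldots,x_k)+(x_0+\cdots+x_k)Z_{h-1,k}(x_0,\ldots,x_k)$, matched against $h_\ell(y_0,\ldots,y_k)=h_\ell(y_1,\ldots,y_k)+y_0\,h_{\ell-1}(y_0,\ldots,y_k)$ (stripping a leading $\circ$ shifts every $E_i$ up by one, whence the shift of variables). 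Your alternative recursion on the \emph{last} letter is the mirror image and also works, but the bookkeeping you flag in the $b_h=\circ$ case resolves as follows: the contribution is not $Z_{h-1,k-1}(x_0,\ldots,x_{k-1})$ but $Z_{h-1,k-1}(x_0,\ldots,x_{k-2},x_{k-1}+x_k)$, because each bracket $x_{E_i(B)}+\cdots+x_k$ still ends at $x_k$; in the $y$-variables this is exactly $h_\ell(y_0,\ldots,y_{k-1})$, since $y_0,\ldots,y_{k-1}$ are unchanged by that substitution, so your identification with the standard recursion for $h_\ell$ goes through. What your write-up buys over the paper's is a fully bijective, induction-free argument; what the paper's buys is a recursion (Lemma~\ref{lem:Zsetrec}) that it reuses to establish the specializations in Corollary~\ref{cor:Zexplic}.
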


Again, there are several ways to establish this result. Perhaps the
most combinatorial explanation comes from the integer partition
approach of Section~\ref{subsec:mjmcpar}, see Remark~\ref{rem:Zparform}
below. But, starting from the definition \eqref{eq:Zworddef}, we may
easily check \eqref{eq:Zhform} by induction, using the following:

\begin{lem} \label{lem:Zsetrec}
  For $h,k \geq 0$, we have
  \begin{equation}
    Z_{h,k}(x_0,\ldots,x_k) = 
    Z_{h-1,k-1}(x_1,\ldots,x_k) + (x_0+\cdots+x_k)
    Z_{h-1,k}(x_0,\ldots,x_k)\label{eq:Zsetrec}
  \end{equation}
  where, by convention, $Z_{h,k}=\delta_{h,k}$ if $h=-1$ or $k=-1$.
\end{lem}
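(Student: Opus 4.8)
The plan is to establish~\eqref{eq:Zsetrec} directly from the word-sum definition~\eqref{eq:Zworddef}, by partitioning the sum over $B \in \mathrm{St}_{h,k}$ according to the first letter $b_1$. Assume for the moment $h \ge 1$, so that $b_1$ is defined; the degenerate cases ($h=0$, or $h=-1$, or $k=-1$) will be checked at the end against the stated conventions and are immediate.

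Consider first the words with $b_1 = \circ$. Stripping the leading letter sets up a bijection $B = \circ B' \leftrightarrow B'$ between $\{B \in \mathrm{St}_{h,k} : b_1 = \circ\}$ and $\mathrm{St}_{h-1,k-1}$. The leading $\circ$ contributes nothing to the product in~\eqref{eq:Zworddef}, and for every position $i \ge 2$ with $b_i = \bullet$ the extra $\circ$ in front shifts the count of preceding $\circ$'s up by one, i.e.\ $E_i(B) = 1 + E_{i-1}(B')$. Hence the factor attached to that position is $x_{E_i(B)} + \cdots + x_k = x_{E_{i-1}(B')+1} + \cdots + x_k$, which is precisely the factor contributed by position $i-1$ of $B'$ in $Z_{h-1,k-1}$ once each variable $x_m$ is renamed $x_{m+1}$. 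Summing over $B'$, the words with $b_1 = \circ$ contribute exactly $Z_{h-1,k-1}(x_1,\ldots,x_k)$.

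Now consider the words with $b_1 = \bullet$, for which $B = \bullet B' \leftrightarrow B'$ is a bijection onto $\mathrm{St}_{h-1,k}$. Since $E_1(B) = 0$, position $1$ contributes the factor $x_0 + \cdots + x_k$; and for $i \ge 2$ with $b_i = \bullet$ no $\circ$ has been removed, so $E_i(B) = E_{i-1}(B')$, and the factor at position $i$ of $B$ equals the one at position $i-1$ of $B'$ in $Z_{h-1,k}(x_0,\ldots,x_k)$. Factoring out the common $x_0 + \cdots + x_k$ and summing over $B'$, these words contribute $(x_0 + \cdots + x_k)\, Z_{h-1,k}(x_0,\ldots,x_k)$. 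Adding the two contributions yields~\eqref{eq:Zsetrec}.

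It remains to treat the boundary cases. If $h=k=0$ the only word is empty, $Z_{0,0}=1$, while the right-hand side is $Z_{-1,-1} + (\cdots)\,Z_{-1,0} = 1 + 0$ by the convention $Z_{h,k}=\delta_{h,k}$; if $h<k$ both sides vanish; and if $k=0$ with $h \ge 1$ the set $\mathrm{St}_{h,0}=\{\bullet^h\}$ has no word beginning with $\circ$, consistently with the vanishing of $Z_{h-1,-1}$. I do not expect any genuine difficulty here: the only point requiring care is keeping straight the index shift $E_i(B) = 1 + E_{i-1}(B')$ in the $b_1=\circ$ branch versus $E_i(B) = E_{i-1}(B')$ in the $b_1=\bullet$ branch, which is exactly what forces the shifted argument list $(x_1,\ldots,x_k)$ in the first term of~\eqref{eq:Zsetrec} rather than $(x_0,\ldots,x_{k-1})$.
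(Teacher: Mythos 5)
Your proof is correct and is exactly the paper's argument: the paper also establishes \eqref{eq:Zsetrec} by splitting the sum in \eqref{eq:Zworddef} according to whether $B$ starts with $\circ$ or $\bullet$, merely stating this in one line where you have written out the index shift $E_i(B)=1+E_{i-1}(B')$ and the boundary conventions. Nothing is missing.
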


\begin{proof}
  Obtained immediately by distinguishing whether $B \in
  \mathrm{St}_{h,k}$ starts with a $\circ$ or a $\bullet$ (for $h<k$
  all terms vanish as wanted).
\end{proof}

We readily verify by induction and homogeneity some explicit specializations:

\begin{cor} \label{cor:Zexplic}
  We have
  \begin{align}
    &Z_{h,k}(1,\ldots,1) = \stirling{h+1}{k+1}, \label{eq:Zunif}\\
    &Z_{h,k}(q^k,q^{k-1},\ldots,1) = \stirling{h+1}{k+1}_q, \label{eq:Zqanal} \\
    &Z_{h,k}(1,q,\ldots,q^k) = q^{k(h-k)} \stirling{h+1}{k+1}_{1/q}, \\
    &Z_{h,k}((1-q),(1-q)q,\ldots,(1-q)q^{k-1},q^k) = \binom{h}{k}_q,
  \end{align}
  where $\stirling{\cdot}{\cdot}$ denotes Stirling numbers of the
  second kind, $\stirling{\cdot}{\cdot}_q$ their $q$-analogues as
  defined in \cite{Gould}
  and $\binom{\cdot}{\cdot}_q$ $q$-binomial coefficients. Using
  homogeneity, we deduce the normalization factors for
  Examples~\ref{exmp:unif} and \ref{exmp:elv}.
\end{cor}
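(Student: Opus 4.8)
The plan is to establish all four identities by a single induction on $h$, in each case specializing Lemma~\ref{lem:Zsetrec} and checking that the claimed right-hand side obeys the very same recursion and boundary conditions as $Z_{h,k}$. The boundary behaviour is uniform and requires no work: on the diagonal $h=k$ one has $\ell=0$, so $Z_{k,k}=1$, matching $\stirling{k+1}{k+1}=\stirling{k+1}{k+1}_q=\binom{k}{k}_q=1$; and for $h<k$ one has $Z_{h,k}=0$, matching the vanishing of all four right-hand sides. Hence only the range $h>k\ge 0$ needs the inductive step, and the convention $Z_{h,k}=\delta_{h,k}$ for $h=-1$ or $k=-1$ makes the recursion bottom out correctly (for $k=0$ the first term of \eqref{eq:Zsetrec} drops out).

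For $x_0=\cdots=x_k=1$ we have $x_0+\cdots+x_k=k+1$, so \eqref{eq:Zsetrec} becomes $Z_{h,k}(1,\dots,1)=Z_{h-1,k-1}(1,\dots,1)+(k+1)Z_{h-1,k}(1,\dots,1)$, which is exactly $\stirling{h+1}{k+1}=\stirling{h}{k}+(k+1)\stirling{h}{k+1}$. For $x_i=q^{k-i}$, the tuple $(x_1,\dots,x_k)=(q^{k-1},\dots,1)$ is the same specialization with $k$ replaced by $k-1$ and $x_0+\cdots+x_k=[k+1]_q$, so \eqref{eq:Zsetrec} becomes $Z_{h,k}=Z_{h-1,k-1}+[k+1]_q Z_{h-1,k}$, the defining recursion of Gould's $q$-Stirling numbers. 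For $x_i=(1-q)q^i$ ($i<k$) and $x_k=q^k$ one computes $x_0+\cdots+x_k=1$, while $(x_1,\dots,x_k)=q\cdot\bigl((1-q),\dots,(1-q)q^{k-2},q^{k-1}\bigr)$ is $q$ times the same specialization at level $k-1$; since $Z_{h-1,k-1}$ is homogeneous of degree $h-k$, \eqref{eq:Zsetrec} becomes $Z_{h,k}=q^{h-k}Z_{h-1,k-1}+Z_{h-1,k}$, which is the Pascal-type recurrence $\binom{h}{k}_q=q^{h-k}\binom{h-1}{k-1}_q+\binom{h-1}{k}_q$.

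The third identity I would deduce from the second rather than by a fresh induction: substituting $q\mapsto 1/q$ in $Z_{h,k}(q^k,\dots,1)=\stirling{h+1}{k+1}_q$ gives $Z_{h,k}(q^{-k},\dots,1)=\stirling{h+1}{k+1}_{1/q}$, and since $(1,q,\dots,q^k)=q^k\cdot(q^{-k},\dots,1)$, homogeneity of degree $h-k$ yields $Z_{h,k}(1,q,\dots,q^k)=q^{k(h-k)}\stirling{h+1}{k+1}_{1/q}$. The normalization factors for Examples~\ref{exmp:unif} and \ref{exmp:elv} then follow by rescaling: the weights $1/(k+1)$ and $(1-q)q^i/(1-q^{k+1})$ differ from $(1,\dots,1)$ and $(1,q,\dots,q^k)$ by a global scalar, so $Z_{h,k}$ picks up that scalar to the power $\ell=h-k$. (The fourth identity is itself the normalization factor of Example~\ref{exmp:truncgeom}, via \eqref{eq:truncgeom}.)

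No step here is a serious obstacle; the only thing demanding care is bookkeeping. One must pin down the precise recurrence satisfied by the $q$-Stirling numbers of \cite{Gould} (and choose the correct one of the two Pascal recurrences for Gaussian binomials), and one must track the powers of $q$ produced by homogeneity whenever a block of variables is globally rescaled. Alternatively, each identity can be read off instantly from Proposition~\ref{prop:Zhform}, since for these four specializations the transformed variables are $y_m=[k{-}m{+}1]_q$ (giving $h_\ell$ of $q$-integers $[1]_q,\dots,[k+1]_q$), $y_m=m{+}1$, $y_m=q^m[k{-}m{+}1]_q$, and $y_m=q^m$ respectively, and $h_\ell(1,q,\dots,q^k)=\binom{h}{k}_q$ together with the analogous symmetric-function evaluations close the argument.
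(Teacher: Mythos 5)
Your proof is correct and takes exactly the route the paper intends: the paper's entire justification is ``we readily verify by induction and homogeneity,'' i.e.\ precisely your specialization of Lemma~\ref{lem:Zsetrec} together with the degree-$\ell$ homogeneity argument for the third identity. (The only blemish is in your optional closing aside via Proposition~\ref{prop:Zhform}, where the $y_m$ for the first two specializations are $k-m+1$ and $[k-m+1]_q$ respectively --- your list swaps and reindexes them --- but by symmetry of $h_\ell$ this does not affect the conclusion.)
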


For completeness, we mention another recursion relation for $Z_{h,k}$
of a different nature, as we remove $x_k$ instead of $x_0$.

\begin{prop} \label{prop:zrecur}
  For $h,k \geq 0$, we have
  \begin{equation} \label{eq:zrecur}
    Z_{h,k}(x_0,\ldots,x_k) =
    \sum_{n=0}^{h-k} \binom{h}{n} x_k^{n} Z_{h-n-1,k-1}(x_0,\ldots,x_{k-1})
  \end{equation}
  where, by convention, $Z_{h,k}=\delta_{h,k}$ if $h=-1$ or $k=-1$.
\end{prop}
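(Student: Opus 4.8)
The plan is to deduce \eqref{eq:zrecur} from the closed form $Z_{h,k}=h_\ell(y_0,\ldots,y_k)$ of Proposition~\ref{prop:Zhform} (with $\ell=h-k$ and $y_m=\sum_{j=m}^{k}x_j$ as in \eqref{eq:ydef}), turning the statement into a classical identity for complete homogeneous symmetric polynomials. The first step is the elementary observation that deleting $x_k$ amounts to a uniform shift of the auxiliary variables: writing $y'_m=\sum_{j=m}^{k-1}x_j$ for the analogues of the $y_m$ attached to the reduced alphabet $x_0,\ldots,x_{k-1}$, one has $y_m=y'_m+x_k$ for $m\le k-1$ and $y_k=x_k=0+x_k$. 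Hence, by Proposition~\ref{prop:Zhform},
\[
  Z_{h,k}=h_\ell\bigl(y'_0+x_k,\ \ldots,\ y'_{k-1}+x_k,\ 0+x_k\bigr),
\]
a complete homogeneous polynomial of degree $\ell$ in the $k+1$ variables $(y'_0,\ldots,y'_{k-1},0)$ each translated by $x_k$.

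The second step is the shift identity
\[
  h_\ell(w_1+a,\ldots,w_m+a)=\sum_{j=0}^{\ell}\binom{m+\ell-1}{\ell-j}\,a^{\ell-j}\,h_j(w_1,\ldots,w_m),
\]
which I would obtain in one line from the generating series $\sum_{\ell\ge0}h_\ell(w)t^\ell=\prod_i(1-w_it)^{-1}$ by factoring $1-(w_i+a)t=(1-at)\bigl(1-\tfrac{w_it}{1-at}\bigr)$ and reading off the coefficient of $t^\ell$. Applying it with $m=k+1$, $a=x_k$ and $(w_1,\ldots,w_m)=(y'_0,\ldots,y'_{k-1},0)$, and using that appending a zero variable leaves a complete homogeneous polynomial unchanged, so that $h_j(y'_0,\ldots,y'_{k-1},0)=h_j(y'_0,\ldots,y'_{k-1})=Z_{(k-1)+j,\,k-1}(x_0,\ldots,x_{k-1})$ by Proposition~\ref{prop:Zhform} again, gives $Z_{h,k}=\sum_{j=0}^{\ell}\binom{k+\ell}{\ell-j}x_k^{\ell-j}Z_{(k-1)+j,\,k-1}(x_0,\ldots,x_{k-1})$. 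Since $k+\ell=h$, the substitution $n=\ell-j$ turns $\binom{k+\ell}{\ell-j}$ into $\binom{h}{n}$ and $(k-1)+j$ into $h-1-n$, and as $h_j$ vanishes for $j<0$ the surviving range is exactly $0\le n\le\ell=h-k$, which is \eqref{eq:zrecur}.

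It then remains to check the conventions: for $h<k$ both sides vanish (the left by Proposition~\ref{prop:Zhform}, the right because the sum is empty), and the base case $k=0$, where the $y'$-list is empty, is matched directly against the stated convention $Z_{h,k}=\delta_{h,k}$ for $h=-1$ or $k=-1$ (only the $n=h$ term survives, giving $x_0^h=Z_{h,0}$). I do not anticipate a genuine obstacle here; the only delicate point is bookkeeping — making the binomial coefficient $\binom{m+\ell-1}{\ell-j}$ coming out of the shift identity coincide with $\binom{h}{n}$, and handling the appended zero variable consistently with the $k=-1$ convention in Proposition~\ref{prop:Zhform}. As alternatives, one could extract $[t^\ell]$ directly from $\frac{1}{1-x_kt}\prod_{m=0}^{k-1}\frac{1}{1-(y'_m+x_k)t}$ without the intermediate identity, or — noting that \eqref{eq:zrecur} at $x_0=\cdots=x_k=1$ specializes, via \eqref{eq:Zunif}, to the classical Stirling recursion $\stirling{h+1}{k+1}=\sum_j\binom{h}{j}\stirling{j}{k}$ — seek a set-partition proof in the spirit of Section~\ref{subsec:enriched}, though the factor $\binom{h}{n}$ ranging over all $h$ positions makes a clean bijection from \eqref{eq:Zworddef} less obvious.
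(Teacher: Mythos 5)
Your proof is correct and follows essentially the same route as the paper's: both write $Z_{h,k}=h_\ell(y'_0+x_k,\ldots,y'_{k-1}+x_k,x_k)$ via Proposition~\ref{prop:Zhform} and then invoke the shift identity for complete homogeneous symmetric polynomials, the only (cosmetic) difference being that you derive that identity from the generating series $\prod_i(1-w_it)^{-1}$ while the paper verifies it by induction on $\ell+k$. Your bookkeeping of the binomial coefficient and the boundary conventions is in order.
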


\begin{proof}
  Write
  \begin{equation}
    Z_{h,k}=h_\ell(y'_0+x_k,y'_1+x_k,\ldots,y'_{k-1}+x_k,x_k)
  \end{equation}
  with $y'_m=\sum_{j=m}^{k-1} x_j$, and use the identity (easily
  verified e.g.\ by induction on $\ell+k$)
  \begin{equation}
    h_\ell(a_0+a,a_1+a,\ldots,a_k+a) = \sum_{n=0}^\ell \binom{\ell+k}{n} a^n
    h_{\ell-n}(a_0,\ldots,a_k).
  \end{equation}
\end{proof}

\subsection{Reformulation of the MJMC in terms of integer partitions}
\label{subsec:mjmcpar}

\begin{figure}[htpb]
  \centering
  \includegraphics[width=1\textwidth]{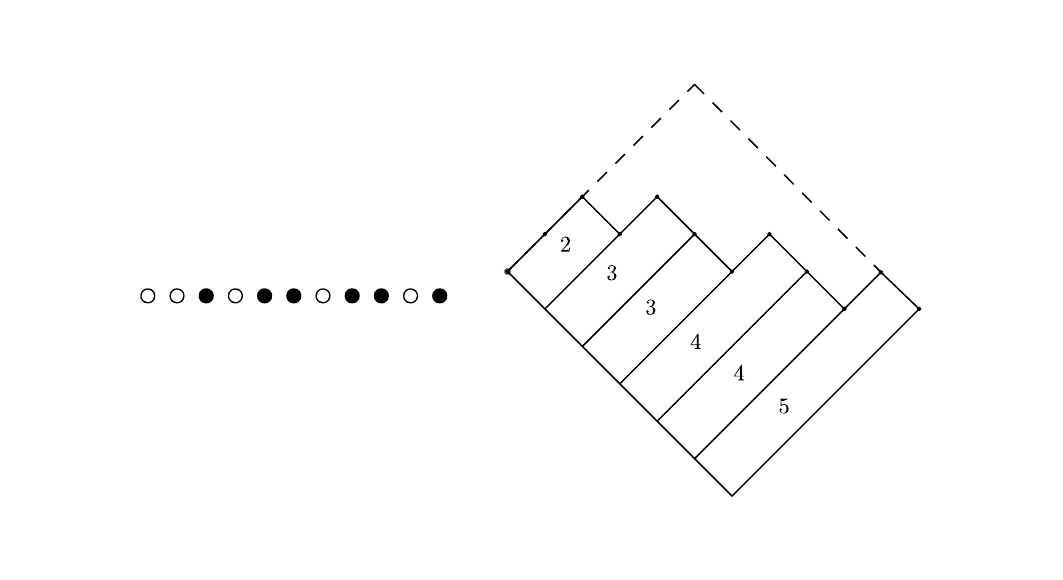}
  \caption{Bijection between juggling states and integer
    partitions. Each $\circ$ is replaced by a north-east step, and
    each $\bullet$ by a south-east step. The partition thus obtained
    is $(5,4,4,3,3,2)$.}
  \label{fig:bijection_states_partitions}
\end{figure}

There is a natural bijection between $\mathrm{St}_{h,k}$ and
$\mathrm{Par}_{k,h-k}$, where $\mathrm{Par}_{k,\ell}$ is the set of
integer partitions whose Young diagram fits within a $k \times \ell$
rectangle. In other words, an element of $\mathrm{Par}_{k,\ell}$ can be
viewed as a nonincreasing sequence of $\ell$ nonnegative integers
smaller than or equal to $k$.  The bijection is given explicitly as follows: given
a state in $\mathrm{St}_{h,k}$, we denote by $s_1 < \cdots < s_\ell$ 
the positions of $\bullet$'s, then the corresponding integer partition is
$(s_\ell-\ell,s_{\ell-1}-(\ell-1),\ldots,s_1-1)$.
Alternatively, an equivalent graphical construction is displayed on
Figure~\ref{fig:bijection_states_partitions}. Upon identifying
$\mathrm{St}_{h,k}$ and $\mathrm{Par}_{k,h-k}$ via this bijection, we
may reformulate the MJMC as follows.

\begin{prop}
  For $\lambda = (\lambda_1,\ldots,\lambda_{\ell})$ and $\mu$
  two partitions in
  $\mathrm{Par}_{k,\ell}$, the transition probability from $\lambda$
  to $\mu$ reads
  \begin{equation}
    P_{\lambda,\mu}=
    \begin{cases}
      1 & \text{if $\lambda_{\ell}\neq 0$ and
        $\mu=(\lambda_1-1,\ldots,\lambda_{\ell}-1)$,} \\
      x_i & \text{if $\lambda_{\ell}=0$ and there exists $j \in \{1,\ldots,\ell\}$ such that} \\
      & \text{$\mu=(\lambda_1-1, \ldots ,\lambda_{j-1}-1,i,\lambda_{j},\ldots,\lambda_{\ell-1})$,}\\
      0 & \text{otherwise.}
    \end{cases}
    \label{eq:part}
\end{equation}
\end{prop}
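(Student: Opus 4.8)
The plan is to push the bijection $\mathrm{St}_{h,k}\to\mathrm{Par}_{k,\ell}$ through a single step of the chain. The basic book‑keeping device is the observation that, writing $s_1<\cdots<s_\ell$ for the positions of the $\bullet$'s in $A=a_1\cdots a_h$, the associated partition $\lambda=(\lambda_1,\ldots,\lambda_\ell)$ satisfies $\lambda_{\ell+1-m}=s_m-m=E_{s_m}(A)$: the part attached to the $m$-th ball (counted from the left) is precisely the number of $\circ$'s to its left. In particular $a_1=\circ$ is equivalent to $s_1\geq 2$, i.e.\ to $\lambda_\ell\geq 1$, so the case split in \eqref{eq:part} matches that of \eqref{eq:jugp}.

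First I would dispatch the case $a_1=\circ$. Then $B=a_2\cdots a_h\circ$ has its balls at $s_1-1<\cdots<s_\ell-1$, and the only $\circ$ that disappears from the prefix preceding a ball is the one formerly at position $1$; hence every ball has exactly one fewer $\circ$ to its left, so $\mu=(\lambda_1-1,\ldots,\lambda_\ell-1)$, which is the first line of \eqref{eq:part}.

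The substance is the case $a_1=\bullet$, where $s_1=1$ and $\lambda_\ell=0$. Introduce the intermediate word $A'=a_2\cdots a_h\circ\in\mathrm{St}_{h,k+1}$; the ball that was at $s_m$ in $A$ now sits at $s_m-1$ in $A'$ and still has $\lambda_{\ell+1-m}$ $\circ$'s to its left ($m=2,\ldots,\ell$), since deleting a $\bullet$ at position $1$ and appending a $\circ$ at the end changes no interior $\circ$-count. Now $T_i$ turns the $(i+1)$-th $\circ$ of $A'$, say the one at position $p$, into a $\bullet$, yielding $B$. In $B$ the new ball at $p$ has exactly $i$ $\circ$'s to its left; every old ball lying to the right of $p$ loses exactly one $\circ$ from its left prefix (the one converted at $p$); every old ball to the left of $p$ is unchanged. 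Since reading the parts of $\mu$ in decreasing order amounts to reading the balls of $B$ from right to left, one meets first the old balls right of $p$, contributing $\lambda_1-1,\ldots,\lambda_{j-1}-1$ (with $j-1$ their number), then the new ball, contributing $i$, then the old balls left of $p$, contributing $\lambda_j,\ldots,\lambda_{\ell-1}$ (the vanishing part $\lambda_\ell$ has gone with the removed first ball). This is precisely the second line of \eqref{eq:part}.

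It remains to see that the two descriptions agree as probability vectors, not just that each $T_i$-move has the stated shape. For this I would check that, for fixed $\lambda$ with $\lambda_\ell=0$ and the convention $\lambda_0=k+1$, the recipe of \eqref{eq:part} produces an honest partition in $\mathrm{Par}_{k,\ell}$ exactly when $\lambda_j\leq i\leq\lambda_{j-1}-1$, and that these ranges, as $j$ runs over $1,\ldots,\ell$, form a partition of $\{0,\ldots,k\}$ into consecutive (possibly empty) blocks; hence each $i\in\{0,\ldots,k\}$ determines a unique admissible $j$, matching the fact that $T_i$ is defined and lands in $\mathrm{St}_{h,k}$ for every such $i$, with weight $x_i$ in both pictures. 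The main obstacle is purely organizational: getting the index conventions of the bijection right (the $\ell$ versus $\ell-1$ shift, the appended $\circ$, and the reversal between ``left‑to‑right in the word'' and ``decreasing order of parts''), and in particular seeing that it is the parts \emph{preceding} the insertion point that are decremented while those \emph{following} it are not.
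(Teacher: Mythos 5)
Your proof is correct, and since the paper explicitly leaves this verification to the reader, it supplies exactly the argument that is omitted: transporting one step of \eqref{eq:jugp} through the bijection via the identity $\lambda_{\ell+1-m}=E_{s_m}(A)$, with the key consistency check that the intervals $\{\lambda_j,\ldots,\lambda_{j-1}-1\}$ (with $\lambda_0=k+1$, $\lambda_\ell=0$) tile $\{0,\ldots,k\}$ so that each $i$ selects a unique insertion slot $j$. No gaps.
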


The proof of this proposition is left to the reader. The interest of
the reformulation in terms of integer partitions is that the
stationary distribution of the MJMC takes a particularly simple form:
indeed, Theorem~\ref{thm:prob-juggle} is equivalent to

\begin{thm}
  \label{thm:finstat}
  The stationary distribution $\pi$ of the MJMC is given by
  \begin{equation}
    \pi(\lambda) = \frac{1}{Z_{h,k}} \prod_{i=1}^\ell y_{\lambda_i}
  \end{equation}
  where $\lambda=(\lambda_1,\ldots,\lambda_\ell)$ is an element of
  $\mathrm{Par}_{k,\ell}$ and where the $y_m$ are as in \eqref{eq:ydef}.
\end{thm}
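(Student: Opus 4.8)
The plan is to check directly that the measure $\pi$ of Theorem~\ref{thm:finstat} is stationary, i.e.\ that it satisfies the balance equations $\pi(\mu)=\sum_{\lambda\in\mathrm{Par}_{k,\ell}}\pi(\lambda)P_{\lambda,\mu}$ for every $\mu\in\mathrm{Par}_{k,\ell}$, with $P$ given by \eqref{eq:part}. Combined with uniqueness of the stationary distribution (Proposition~\ref{prop:finirred}, as $x_0>0$) this suffices, once one notes that $\pi$ is indeed a probability distribution: it is nonnegative since each $y_m\ge 0$, and $\sum_\lambda\prod_{i=1}^\ell y_{\lambda_i}=Z_{h,k}$ because $\mathrm{Par}_{k,\ell}$ is precisely the set of weakly increasing $\ell$-tuples $0\le i_1\le\cdots\le i_\ell\le k$ occurring in Proposition~\ref{prop:Zhform}.

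First I would, for a fixed $\mu=(\mu_1,\dots,\mu_\ell)$, enumerate its predecessors under \eqref{eq:part}. There is at most one predecessor of the first (``global shift'') type, namely $\mu+(1^\ell)$, which lies in $\mathrm{Par}_{k,\ell}$ exactly when $\mu_1\le k-1$, with transition probability $1$. The predecessors of the second (``drop-and-reinsert'') type are, for $j=1,\dots,\ell$, the $\ell$-tuples
\[
  \lambda^{(j)}:=(\mu_1+1,\dots,\mu_{j-1}+1,\mu_{j+1},\dots,\mu_\ell,0),
\]
each reaching $\mu$ by reinserting the value $\mu_j$, hence with transition probability $x_{\mu_j}$; here $\lambda^{(1)}$ always belongs to $\mathrm{Par}_{k,\ell}$, while $\lambda^{(j)}$ for $j\ge 2$ does so iff $\mu_1\le k-1$. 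One checks these $\lambda^{(j)}$ are pairwise distinct (for $j<j'$ they disagree in position $j$, carrying $\mu_{j+1}$ and $\mu_j+1$ respectively, and $\mu_{j+1}\le\mu_j<\mu_j+1$) and differ from the shift predecessor (whose last part is positive), so there is no overcounting.

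Substituting these contributions, using $y_0=x_0+\cdots+x_k=1$, and adopting the convention $y_{k+1}=0$ so that the terms attached to tuples carrying the forbidden part $k+1$ drop out automatically (which makes the boundary case $\mu_1=k$ come out uniformly), the balance equation at $\mu$ reduces to the identity
\[
  \prod_{i=1}^\ell y_{\mu_i}=\prod_{i=1}^\ell y_{\mu_i+1}+\sum_{j=1}^\ell x_{\mu_j}\Bigl(\prod_{i=1}^{j-1}y_{\mu_i+1}\Bigr)\Bigl(\prod_{i=j+1}^\ell y_{\mu_i}\Bigr).
\]
This I would establish by telescoping: writing $Q_j=\prod_{i=1}^j y_{\mu_i+1}$ and $R_j=\prod_{i=j}^\ell y_{\mu_i}$, the single relation $y_m=x_m+y_{m+1}$ yields $x_{\mu_j}Q_{j-1}R_{j+1}=Q_{j-1}R_j-Q_jR_{j+1}$, so that $\sum_{j=1}^\ell x_{\mu_j}Q_{j-1}R_{j+1}=Q_0R_1-Q_\ell R_{\ell+1}=\prod_{i=1}^\ell y_{\mu_i}-\prod_{i=1}^\ell y_{\mu_i+1}$, which is the claim.

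The step I expect to be the main obstacle is the bookkeeping above: verifying that $\{\mu+(1^\ell)\}\cup\{\lambda^{(j)}\}_{j}$ is the complete list of predecessors, that no predecessor is counted twice, and that the boundary cases $\mu_1=k$ (together with the vanishing of $y_{k+1}$) are treated correctly. Once the predecessor structure is settled, the concluding identity is immediate from the single relation $y_m=x_m+y_{m+1}$.
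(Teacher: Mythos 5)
Your proposal is correct and follows essentially the same route as the paper: a direct verification of the balance equation by enumerating the predecessors of a partition $\mu$ (the global shift $\mu+(1^\ell)$ plus the $\ell$ drop-and-reinsert predecessors) and then telescoping via $y_m=x_m+y_{m+1}$. The only cosmetic difference is that you absorb the boundary case $\mu_1=k$ into the convention $y_{k+1}=0$, whereas the paper treats $\lambda_1=k$ as a separate (one-predecessor) case.
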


\begin{proof}
  Let $w(\lambda)=\prod_{i=1}^\ell y_{\lambda_i}$ be the unnormalized
  weight of $\lambda \in \mathrm{Par}_{k,\ell}$.  We simply check that
  \begin{equation}
    w(\lambda) = \sum_{\mu \in \mathrm{Par}_{k,\ell}} w(\mu) P_{\mu,\lambda}
  \end{equation}
  by considering the possible predecessors of $\lambda$:
  \begin{itemize}
  \item If $\lambda_1 = k$ then the only possible predecessor for
    $\lambda$ is $\mu =
    (\lambda_2,\ldots,\lambda_{\ell},0)$, so that
    $w(\mu)=\prod_{i=2}^{\ell} y_{\lambda_i}$ (because $y_0 = 1$) and
    $P_{\mu,\lambda} = x_k = y_k$. Thus we have $w(\lambda) =
    P_{\mu,\lambda}w(\mu)$ as wanted.
  \item We now assume that $\lambda_1 < k$. Let $\mu$ be a predecessor
    of $\lambda$, $\mu$ can 
  \begin{itemize}
  \item either be of the form
    $\mu=(\lambda_1+1,\ldots,\lambda_{\ell}+1)$, i.e.\ is associated
    with a word that starts with a $\circ$, in which case we have
    $w(\mu) = \prod_{i=1}^{\ell} y_{\lambda_i+1} = \prod_{i=1}^\ell
    (y_{\lambda_i}-x_{\lambda_i})$ and $P_{\mu,\lambda} = 1$;
  \item or be of the form
    $\mu=(\lambda_1+1,\ldots,\lambda_{j-1}+1,\lambda_{j+1},\ldots,\lambda_\ell,0)$
    for any $1\leq j\leq \ell$, i.e.\ is associated with a word that
    starts with a $\bullet$, in which case we have $w(\mu) =
    \prod_{i=1}^{j-1} (y_{\lambda_i}-x_{\lambda_i})
    \prod_{i=j+1}^{\ell} y_{\lambda_i}$ and $P_{\mu,\lambda} =
    x_{\lambda_j}$.
  \end{itemize}
  Thus, we have
  \begin{equation} \label{eq:finstatverif}
    \begin{split}
      \sum_{\mu \in \mathrm{Par}_{k,\ell}} w(\mu) P_{\mu,\lambda} &=
      \prod_{i=1}^\ell (y_{\lambda_i}-x_{\lambda_i}) +
      \sum_{j=1}^{\ell} \left(x_{\lambda_j}
        \prod_{i=1}^{j-1}(y_{\lambda_i}-x_{\lambda_i})\prod_{i=j+1}^{\ell}
        y_{\lambda_i}\right) \\
      &= \prod_{i=1}^\ell y_{\lambda_i} = w(\lambda)
    \end{split}
  \end{equation}
  where, to go from the first line to the second, we write
  $x_{\lambda_j} = y_{\lambda_j} - (y_{\lambda_j}-x_{\lambda_j})$
  so that the sum on the right hand side  becomes telescopic.
\end{itemize}\end{proof}

\begin{exmp} \label{exmp:parpar}
  We may now revisit Examples \ref{exmp:unif}, \ref{exmp:elv} and
  \ref{exmp:truncgeom} in the language of integer partitions. The
  corresponding stationary probabilities of $\lambda \in
  \mathrm{Par}_{k,\ell}$ read
  \begin{equation}
    \label{eq:parexmp}
    \pi(\lambda) =
    \begin{cases}
      \frac{1}{\stirling{h+1}{k+1}} \prod_{i=1}^\ell (k+1-\lambda_i) &
      \text{for $x_i=\frac{1}{k+1}$,} \\
      \frac{1}{q^{k \ell} \stirling{h+1}{k+1}_{1/q}} \prod_{i=1}^\ell q^{\lambda_i} [k+1-\lambda_i]_q &
      \text{for $x_i=\frac{q^i}{[k+1]_q}$,} \\
      \frac{1}{\binom{h}{k}_q} \prod_{i=1}^\ell q^{\lambda_i} &
      \text{for $x_i=(1-q)^{1-\delta_{i,k}} q^i$.} \\
    \end{cases}
  \end{equation}
\end{exmp}

\begin{rem}
  \label{rem:Zparform}
  One can immediately recover Proposition~\ref{prop:Zhform} from
  Theorem~\ref{thm:finstat}, as
  \begin{equation}
    \label{eq:Zparform}
    Z_{h,k} = \sum_{\lambda \in \mathrm{Par}_{k,\ell}}
    \prod_{j=1}^{\ell} y_{\lambda_j} =
    \sum_{\substack{(m_0,\ldots,m_k) \in \mathbb{N}^{k+1} \\
        \sum_{i=0}^k m_i=\ell}} \prod_{i=0}^k y_i^{m_i} = h_\ell(y_0,\ldots,y_k)
  \end{equation}
  where the $m_i$ correspond to the part multiplicities.
  A slight extension of this reasoning consists in restricting the sum
  to partitions $\lambda$ such that, say, $\lambda_j=n$ (with $1\leq j
  \leq \ell$ and $0\leq n\leq k$), so as to obtain the probability that the
  $j$-th part in the stationary distribution $\pi$ is $n$:
  \begin{equation}
    \pi(\{\lambda_j=n\}) = \frac{y_n h_{j-1}(y_n,\ldots,y_k)
      h_{\ell-j}(y_0,\ldots,y_n)}{h_{\ell}(y_0,\ldots,y_k)}.
  \end{equation}
  More generally, the joint distribution for a fixed number of parts reads
  \begin{equation}
    \pi(\{\lambda_{j_1}=n_1,\ldots,\lambda_{j_m}=n_m\}) =
    \frac{\prod_{s=1}^{m+1} y_{n_s} h_{j_s-j_{s-1}-1}(y_{n_s},\ldots,y_{n_{s-1}})}{
      h_{\ell}(y_0,y_1,\ldots,y_k)}
  \end{equation}
  where $0=j_0 < \cdots < j_{m+1}=\ell+1$ and $k =n_0 
 \geq \cdots \geq n_{m+1} = 0$ (with $y_0=1$).
\end{rem}

\subsection{Enriched Markov chain on set partitions}
\label{subsec:enriched}

We now provide another, more combinatorial, proof of
Theorem~\ref{thm:prob-juggle} whose rough idea goes as follows:
consider the stationary probability \eqref{eq:prob-juggle} and expand
the product in the right hand side as a sum of monomials in the $x_i$'s. We will
interpret each of these monomials as the stationary probability of an
``enriched'' state belonging to the larger state space of another
Markov chain. The MJMC will then be obtained as a projection of this
enriched chain (see e.g.\ \cite[Section 2.3.1]{LevinPeresWilmer} for a
definition of this notion).

\subsubsection{Definitions and basic properties}
\label{sec:enricheddef}

Our construction is a generalization of that of Warrington's.
Rather than working with ``landing/throwing-states'', we
prefer to work directly in the language of set partitions, see
\cite[Lemma 2]{Warrington}.  Recalling that $h$ stands for the total
number of sites and $k$ for the number of empty sites in the particle
picture of the MJMC, let us introduce the shorthand notations
\begin{equation}
  H = h+1 \qquad \text{and} \qquad K=k+1.
\end{equation}
An \emph{enriched state} will then be a partition of the set
$\{1,\ldots,H\}$ into $K$ subsets or \emph{blocks}.  We denote
by $\mathcal{S}(H,K)$ the set of enriched states, and recall that
$\stirling{H}{K}=|\mathcal{S}(H,K)|$ is a Stirling number of the second
kind. To each enriched state $\sigma$, we associate a word
$\psi(\sigma)=a_1 \ldots a_h$ by setting, for all $i$ between $1$ and
$h$, $a_i=\circ$ if $i$ is a \emph{block maximum} of $\sigma$
(i.e. the largest element of its block in $\sigma$), and $a_i=\bullet$
otherwise. Observe that $\psi$ is a surjection from $\mathcal{S}(H,K)$
onto $\mathrm{St}_{h,k}$.

We now define the enriched Markov chain on $\mathcal{S}(H,K)$, which
requires some notations.
For an enriched state $\sigma$, we denote by $\sigma^\downarrow$ the
partition of the set $\{1,\ldots,h\}$ obtained by removing $1$ from
$\sigma$ (i.e. removing $1$ from its block in $\sigma$, and removing
this block from $\sigma$ if it becomes empty), and shifting all the
remaining elements of all blocks down by $1$. Note that $\sigma
\mapsto \sigma^\downarrow$ is a mapping from $\mathcal{S}(H,K)$ to
$\mathcal{S}(h,k) \cup \mathcal{S}(h,K)$ (which is 
surjective). For $\tau \in \mathcal{S}(h,K)$ and $i \in
\{0,\ldots,k\}$, we denote by $I_i(\tau)$ the set partition of
$\{1,\ldots,H\}$ obtained by inserting $H$ into the $(i+1)$-th block
of $\tau$, where the blocks are numbered by ascending order of their
maxima (i.e. the first block has the smallest maximum among all
blocks, etc. -- this differs from the so-called standard form which
consists in ordering blocks by ascending order of their minima). Note
that $I_i(\tau) \in \mathcal{S}(H,K)$ and that the mapping $(\tau,i)
\mapsto I_i(\tau)$ is injective.

\begin{Def}
  The \emph{enriched chain} is the Markov chain on $\mathcal{S}(H,K)$
  for which the transition probability from $\sigma$ to $\tau$ is
  given by
  \begin{equation} \label{eq:jugp-enriched}
    \tilde{P}_{\sigma,\tau} =
    \begin{cases}
      1 & \text{if $\{1\} \in \sigma$ and $\tau = \sigma^\downarrow \cup \{H\}$,} \\
      x_i & \text{if $\{1\} \notin \sigma$ and $\tau =
        I_i(\sigma^\downarrow)$
        for some $i \in \{0,\ldots,k\}$,}\\
        0 & \text{otherwise}.
    \end{cases}
  \end{equation}
\end{Def}

The condition $x_0+\cdots+x_k=1$ and the above remarks ensure that
$\tilde{P}$ is indeed a right stochastic matrix (i.e.\ each of its
rows sums to $1$).

\begin{exmp}
  For $H=8$ and $K=3$:
  \begin{itemize}
  \item the enriched state $1\,|\,3,5,6\,|\,2,4,7,8$ jumps with probability
	$1$ to $2,4,5\,|\,1,3,6,7\,|\,8$,
  \item the enriched state $\sigma = 3,5\,|\,2,6,7\,|\,1,4,8$
  	reaches the intermediate state 
  	$\sigma^\downarrow = 2,4\,|\,1,5,6\,|\,3,7$ and
    jumps with probability:
    \begin{itemize}
    \item $x_0$ to $1,5,6\,|\,3,7\,|\,2,4,8$,
    \item $x_1$ to $2,4\,|\,3,7\,|\,1,5,6,8$,
    \item $x_2$ to $2,4\,|\,1,5,6\,|\,3,7,8$.
    \end{itemize}
  \end{itemize}
  Note that here we write blocks in ascending order of their maxima,
  which differs from the standard notation of writing set partitions
  in ascending order of their minima.
\end{exmp}

\begin{figure}[thtp]
  \centering
  \includegraphics[width=\textwidth]{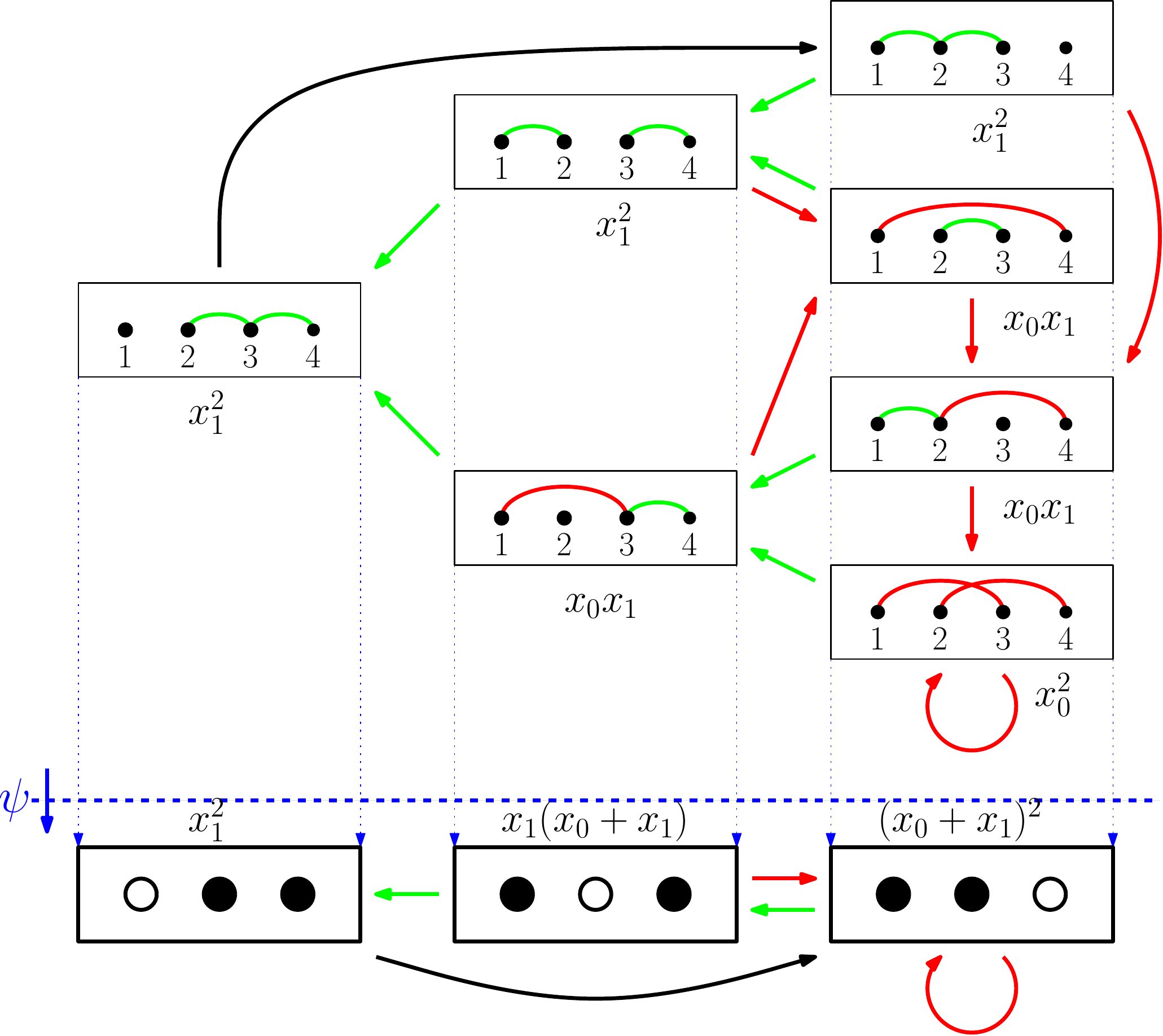}
  \caption{Transition graphs of the enriched chain (top part, above
    blue horizontal dashed line) and of the MJMC (bottom part) for
    $H=4$ and $K=2$ ($h=3$ and $k=1$). Enriched states (set
    partitions) are represented via ``arches'', see
    Definition~\ref{Def:arch}.  Red, green and black arrows represent
    transitions of respective probability $x_0$, $x_1$ and $1$. Next
    to each state is displayed its unnormalized stationary weight; for
    the enriched chain it is obtained by attaching a weight $x_1$
    (resp.\ $x_0$) to each arch covering 1 (resp.\ 2) blocks,
    displayed in green (resp.\ red). Enriched states in a same column
    map to the same MJMC state (displayed below) via $\psi$.}
  \label{fig:chain42}
\end{figure}

\begin{exmp}
  For $H=4$ and $K=2$, the transition graph of the enriched chain is
  illustrated on Figure~\ref{fig:chain42}.
\end{exmp}

The existence and uniqueness of the stationary distribution of the
enriched chain results from:

\begin{prop} \label{prop:enrichirred}
  For $x_0>0$, the enriched chain has a unique closed communicating
  class, whose all states are aperiodic. The chain is irreducible if
  and only if all $x_i$'s are nonzero.
\end{prop}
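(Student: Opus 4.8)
The plan is to mimic the proof of Proposition~\ref{prop:finirred} as closely as possible, lifting the construction of a path to the lowest state and the construction of an arbitrary state from the lowest one to the level of enriched states.

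First I would identify the analogue of the lowest state $\mathcal{E}=\bullet^\ell\circ^k$ in $\mathcal{S}(H,K)$. A natural candidate is the set partition $\widetilde{\mathcal{E}}$ whose blocks are, say, $\{1,2,\ldots,\ell+1\}$ together with the singletons $\{\ell+2\},\{\ell+3\},\ldots,\{H\}$; one checks that $\psi(\widetilde{\mathcal{E}})=\mathcal{E}$, and more importantly that $\widetilde{\mathcal{E}}^{\downarrow}\cup\{H\}=\widetilde{\mathcal{E}}$, so that $\widetilde{\mathcal{E}}$ has a self-loop of probability $x_0$ whenever $x_0>0$. Next I would show that from any enriched state $\sigma$ one can reach $\widetilde{\mathcal{E}}$ by always reinserting $H$ into the first block (the one with smallest maximum), i.e.\ using transitions $I_0(\cdot)$ of probability $x_0$ and the forced transitions of probability $1$. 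The point is that repeatedly removing $1$ and reinserting the new top element $H$ into the lowest-maximum block causes all the ``block maxima'' to drift downward, and after at most $H$ (or a comparable bounded number of) steps the partition stabilizes at $\widetilde{\mathcal{E}}$; here one should track how $\sigma^{\downarrow}$ and $I_0$ interact and verify that the process is eventually trapped. This establishes that there is a unique closed communicating class, and since $\widetilde{\mathcal{E}}$ lies in it and has a self-loop, all its states are aperiodic.

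For the irreducibility statement, I would argue both directions. If some $x_i=0$, the transition $I_i$ is never used, and one exhibits an enriched state that cannot be reached — for instance one can simply note that the projection $\psi$ is compatible with the dynamics (this compatibility is exactly what Section~\ref{subsec:enriched} is set up to prove, but the one-step statement ``$\psi$ intertwines $\tilde P$ and $P$'' is immediate from the definitions) and invoke the corresponding non-irreducibility of the MJMC together with the observation that a state of $\mathrm{St}_{h,k}$ outside the image of reachable enriched states is itself unreachable; alternatively, a direct combinatorial obstruction on block sizes works. Conversely, if all $x_i>0$, I would lift the explicit $h$-step construction $A_0=\mathcal{E},A_1,\ldots,A_h=A$ from the proof of Proposition~\ref{prop:finirred} to a construction at the enriched level: choose any $\widetilde{A}\in\mathcal{S}(H,K)$ with $\psi(\widetilde{A})=A$ and build a sequence $\widetilde{\mathcal{E}}=\widetilde{A}_0,\widetilde{A}_1,\ldots$ of enriched states, at each step reinserting $H$ into whichever block is forced by the target $\widetilde{A}$; one checks each such step has positive probability (it is of type $I_0,\ldots,I_k$, hence probability one of the $x_i>0$, or a forced $1$-transition). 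Combined with the fact that $\widetilde{\mathcal{E}}$ is reachable from everywhere, this gives irreducibility.

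The main obstacle I expect is the ``stabilization'' argument in the second step: making precise that always inserting at the lowest block eventually produces $\widetilde{\mathcal{E}}$, and bounding the number of steps. The subtlety is that $\sigma\mapsto\sigma^{\downarrow}$ can either keep $K$ blocks or drop to $K$ blocks on $\{1,\dots,h\}$ depending on whether $\{1\}$ was a block, so one must carefully case-split and find the right monovariant — for example, the number of elements not in the lowest-maximum block, or the position of the largest block maximum, should be shown to be non-increasing and to strictly decrease until $\widetilde{\mathcal{E}}$ is reached. Once the correct monovariant is isolated, the rest is a routine check directly from \eqref{eq:jugp-enriched}.
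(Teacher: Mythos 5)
There is a genuine gap, and it sits exactly at the point you flagged as "the main obstacle": your candidate lowest state is not a fixed point of the $x_0$-dynamics, so the stabilization argument cannot converge to it. For $\ell\geq 1$ the partition $\widetilde{\mathcal{E}}=\{1,\ldots,\ell+1\}\,|\,\{\ell+2\}\,|\cdots|\,\{H\}$ does not contain $\{1\}$ as a singleton, so the relevant transition is $I_0(\widetilde{\mathcal{E}}^\downarrow)$, not $\widetilde{\mathcal{E}}^\downarrow\cup\{H\}$ (the latter would anyway have $K+1$ blocks). Computing it: $\widetilde{\mathcal{E}}^\downarrow=\{1,\ldots,\ell\}\,|\,\{\ell+1\}\,|\cdots|\,\{h\}$, whose block of smallest maximum is $\{1,\ldots,\ell\}$, so $I_0(\widetilde{\mathcal{E}}^\downarrow)=\{1,\ldots,\ell,H\}\,|\,\{\ell+1\}\,|\cdots|\,\{h\}\neq\widetilde{\mathcal{E}}$. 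The actual attractor of the "always insert into the lowest block" dynamics is the partition into congruence classes modulo $K$, namely $\tilde{\mathcal{E}}=\{(j+K\mathbb{N})\cap\{1,\ldots,H\}\}_{j=1,\ldots,K}$ (for $H=4$, $K=2$ this is $1,3\,|\,2,4$, and one checks $I_0(\tilde{\mathcal{E}}^\downarrow)=\tilde{\mathcal{E}}$); this is the state the paper uses. With the correct fixed point the rest of your first part goes through, but as written the foundation is wrong.

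Two further issues. First, your "only if" argument via projection is logically backwards: non-irreducibility of the MJMC would follow from non-irreducibility of the enriched chain, not the other way around, and in fact the MJMC \emph{is} irreducible whenever $x_0>0$ and $x_k>0$ even if some intermediate $x_i$ vanishes (Proposition~\ref{prop:finirred}), whereas the enriched chain is not. So you must produce a direct obstruction; the paper exhibits an explicit set partition requiring an arch covering exactly $K-i$ blocks, which can only be created by the transition $I_i$ of probability $x_i$. Second, your lifting of the $h$-step construction for the "if" direction is plausible but delicate: the forced transitions of probability $1$ (which occur whenever $\{1\}$ is a singleton of the current state) compel $H$ to start a new singleton block, and you would need to check this never conflicts with the block structure demanded by the target $\widetilde{A}$. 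The paper avoids this entirely with a counting argument: after splitting each probability-$1$ transition into $K$ transitions of probabilities $x_0,\ldots,x_k$, every state has exactly $K$ outgoing and $K$ incoming edges, so in a finite graph every communicating class is closed, and uniqueness of the closed class gives irreducibility. That argument is both shorter and immune to the timing issue above.
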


\begin{proof}
  The first assertion is proved in the same way as that of
  Proposition~\ref{prop:finirred}, the ``lowest'' state being now the
  set partition
  \begin{equation}
    \tilde{\mathcal{E}}=\left\{\{1,\ldots,H\} \cap (j+K \mathbb{N})
    \right\}_{j=1,\ldots,K}
    \label{eq:tildeEdef}
  \end{equation}
  (precisely, starting from any initial state, we eventually obtain
  $\tilde{\mathcal{E}}$ by applying only transitions of probabilities
  $x_0$ and $1$). Note that for $H=4$ and $K=2$, we have
  $\tilde{\mathcal{E}}=1,3\,|\,2,4$ (which is the state with two red
  arches in Figure~\ref{fig:chain42}).
  
  For the second assertion, suppose that $x_i>0$ for all $i$: we want
  to prove that the transition graph is strongly connected. We first
  replace the transitions of probability 1 by $K$ transitions of
  probabilities $x_0,\ldots,x_k$, so that each state has $K$ outgoing
  transitions, counting transitions to itself.  It is then not
  difficult to check from the definition of the enriched chain that
  each state also has $K$ incoming transitions. It follows that for
  any set of states, the number of incoming and outgoing transitions
  are the same. Since the state space is finite, all communicating
  classes are closed. As there is a unique closed communicating class,
  irreducibility follows. Conversely, if $x_i=0$ for some $i$, then
  clearly some set partitions, e.g.\ $1\,|\,2|\cdots|i|i+2|\cdots|K-1|K\,
  (K+1) \cdots h|(i+1)\, H$, are never produced.
\end{proof}

\subsubsection{Projection, stationary distribution and second proof of
  Theorem~\ref{thm:prob-juggle}}
\label{sec:enrichedintwn}

From the surjection $\psi$ introduced above, we define a rectangular
matrix $\Psi$ with rows indexed by elements of $\mathcal{S}(H,K)$ and
columns indexed by elements of $\mathrm{St}_{h,k}$ in the obvious
manner (namely a coefficient of $\Psi$ is $1$ if its column index is
the image by $\psi$ of its row index, and $0$ otherwise).

\begin{lem} \label{lem:intertwin}
  We have the intertwining relation
  \begin{equation} \label{eq:intertwin}
    \tilde{P} \Psi = \Psi P.
  \end{equation}
\end{lem}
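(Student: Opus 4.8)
The plan is to verify the intertwining relation $\tilde P\Psi = \Psi P$ entrywise, by fixing an enriched state $\sigma \in \mathcal{S}(H,K)$ and a word $B \in \mathrm{St}_{h,k}$ and comparing $(\tilde P\Psi)_{\sigma,B} = \sum_{\tau \in \psi^{-1}(B)} \tilde P_{\sigma,\tau}$ with $(\Psi P)_{\sigma,B} = P_{\psi(\sigma),B}$. So one must show that the total probability that the enriched chain moves from $\sigma$ into the fiber $\psi^{-1}(B)$ equals the MJMC transition probability from $A := \psi(\sigma)$ to $B$. The argument splits into the two cases governing both chains: whether or not $\{1\}$ is a singleton block of $\sigma$, equivalently (one checks) whether $a_1 = \circ$ or $a_1 = \bullet$, where $A = a_1\cdots a_h$. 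Indeed $1$ is a block maximum of $\sigma$ iff $1$ is alone in its block, so $a_1 = \circ \iff \{1\}\in\sigma$.

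In the case $\{1\}\in\sigma$ (so $a_1 = \circ$), the enriched chain moves deterministically to $\sigma^\downarrow \cup\{H\}$, so I need $\psi(\sigma^\downarrow\cup\{H\}) = a_2\cdots a_h\circ$, which matches the MJMC rule $P_{A,B}=1$ for $B = a_2\cdots a_h\circ$. This is a direct check: removing the singleton $\{1\}$ and shifting down turns the block maxima of $\sigma$ in positions $2,\ldots,h$ into block maxima of $\sigma^\downarrow$ in positions $1,\ldots,h-1$, and adjoining the new singleton block $\{H\}$ creates a block maximum at position $h$; hence $\psi(\sigma^\downarrow\cup\{H\})_i = a_{i+1}$ for $i<h$ and $= \circ$ for $i = h$. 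So $(\tilde P\Psi)_{\sigma,B}$ is $1$ exactly when $B = a_2\cdots a_h\circ$ and $0$ otherwise, agreeing with $P_{A,B}$.

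In the case $\{1\}\notin\sigma$ (so $a_1 = \bullet$), the enriched chain goes to $I_i(\sigma^\downarrow)$ with probability $x_i$ for $i = 0,\ldots,k$, and $\sigma^\downarrow \in \mathcal{S}(h,K)$. The crucial combinatorial point is to identify $\psi(I_i(\tau))$ for $\tau\in\mathcal{S}(h,K)$: inserting $H$ into the $(i{+}1)$-th block of $\tau$ (blocks ordered by ascending maxima) makes $H$ the new maximum of that block, so its old maximum — which was the $(i{+}1)$-th smallest block maximum of $\tau$, hence corresponds to the $(i{+}1)$-th occurrence of $\circ$ in $\psi(\tau)$ — ceases to be a block maximum. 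Thus $\psi(I_i(\tau)) = T_i(\psi(\tau)\,\circ)$, where the extra $\circ$ records that $H$ becomes a new block maximum at position $h+1 = H$, i.e. we append $\circ$ to $\psi(\tau)$ and then flip the $(i{+}1)$-th $\circ$. Since $\{1\}\notin\sigma$ implies $\psi(\sigma^\downarrow) = a_2\cdots a_h$ (removing position $1$, which was a non-maximum, and shifting; one checks no spurious block maxima appear and none disappear among positions $2,\ldots,h$), we get $\psi(I_i(\sigma^\downarrow)) = T_i(a_2\cdots a_h\circ)$. Therefore $(\tilde P\Psi)_{\sigma,B} = \sum_{i:\,T_i(a_2\cdots a_h\circ)=B} x_i = P_{A,B}$, using that $(\tau,i)\mapsto I_i(\tau)$ is injective so distinct $i$ give distinct targets and the sum has at most one term.

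The main obstacle is the bookkeeping in the second case: one must carefully track how the operations $\sigma\mapsto\sigma^\downarrow$ and $\tau\mapsto I_i(\tau)$ interact with the notion of block maximum and with the chosen block ordering (by maxima, not minima), and verify the identity $\psi\circ I_i = T_i\circ(\psi(\cdot)\,\circ)$ and $\psi(\sigma^\downarrow) = a_2\cdots a_h$ without off-by-one errors in which occurrence of $\circ$ gets flipped. Once these two lemmas on how $\psi$ transports the operations are in hand, the intertwining relation is immediate from matching the two cases of \eqref{eq:jugp} against the two cases of \eqref{eq:jugp-enriched}. It is also worth noting for later use that $\Psi$ has full column rank (each column nonzero, disjoint supports) and $\Psi\mathbf{1} = \mathbf{1}$, so \eqref{eq:intertwin} genuinely exhibits the MJMC as a lumping of the enriched chain.
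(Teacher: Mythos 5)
Your proof is correct and follows essentially the same route as the paper's: an entrywise verification splitting on whether $\{1\}\in\sigma$, resting on the two transport identities $\psi(\sigma^\downarrow\cup\{H\})=a_2\cdots a_h\circ$ and $\psi(I_i(\sigma^\downarrow))=T_i(a_2\cdots a_h\circ)$, which are exactly the paper's \eqref{eq:psicond1} and \eqref{eq:psicond2}. The only cosmetic difference is that you spell out the intermediate fact $\psi(\sigma^\downarrow)=a_2\cdots a_h$ explicitly, which the paper leaves implicit.
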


For clarity, the proofs of all lemmas in this section are postponed to
Section~\ref{sec:enrichedproofs}. Note that, if we view the preimages
by $\psi$ of elements of $\mathrm{St}_{h,k}$ as equivalences classes
for an equivalence relation on $\mathcal{S}(H,K)$, then
\eqref{eq:intertwin} shows that the MJMC is the projection of the
enriched chain \cite[Lemma 2.5]{LevinPeresWilmer}. Noting that the sum
of each row of $\Psi$ is $1$, we immediately deduce:

\begin{cor} \label{cor:interstat} We have
  \begin{equation}
    \pi = \tilde{\pi} \Psi
  \end{equation}
  where $\tilde{\pi}$ is the stationary probability distribution of
  the enriched chain (viewed as a row vector), and $\pi$ is that of
  the MJMC.
\end{cor}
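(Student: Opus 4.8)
The plan is to deduce $\pi = \tilde\pi \Psi$ directly from the intertwining relation $\tilde P \Psi = \Psi P$ of Lemma~\ref{lem:intertwin}, together with the fact that every row of $\Psi$ sums to $1$, i.e.\ $\Psi \mathbf{1} = \mathbf{1}$ where $\mathbf{1}$ denotes the all-ones column vector (of the appropriate size in each case). First I would set $\nu := \tilde\pi \Psi$, a nonnegative row vector indexed by $\mathrm{St}_{h,k}$. The key computation is
\begin{equation}
  \nu P = \tilde\pi \Psi P = \tilde\pi \tilde P \Psi = \tilde\pi \Psi = \nu,
\end{equation}
where the middle equality is exactly \eqref{eq:intertwin} and the third uses that $\tilde\pi$ is stationary for the enriched chain, $\tilde\pi \tilde P = \tilde\pi$ (which exists and is unique by Proposition~\ref{prop:enrichirred}, as recalled just before the corollary). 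Hence $\nu$ is a stationary (row) vector for $P$.

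Next I would check that $\nu$ is a probability distribution: its entries are nonnegative since those of $\tilde\pi$ and $\Psi$ are, and its total mass is
\begin{equation}
  \nu \mathbf{1} = \tilde\pi \Psi \mathbf{1} = \tilde\pi \mathbf{1} = 1,
\end{equation}
using $\Psi\mathbf{1}=\mathbf{1}$ (each row of $\Psi$ has a single $1$) and that $\tilde\pi$ is a probability distribution. Finally, since $x_0>0$, Proposition~\ref{prop:finirred} guarantees that the MJMC has a unique stationary probability distribution $\pi$; therefore $\nu = \pi$, which is the claimed identity.

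I do not anticipate any real obstacle here: the corollary is a formal consequence of the intertwining lemma and the row-stochasticity of $\Psi$, and the only external inputs are the already-established existence/uniqueness statements of Propositions~\ref{prop:finirred} and~\ref{prop:enrichirred} and Lemma~\ref{lem:intertwin} itself. The one point worth stating carefully is that $\Psi$ is genuinely a $0/1$ matrix with exactly one $1$ per row (so $\Psi\mathbf{1}=\mathbf{1}$), which is immediate from the definition of $\psi$ as a map $\mathcal{S}(H,K)\to\mathrm{St}_{h,k}$; this is precisely the remark ``the sum of each row of $\Psi$ is $1$'' made in the text, so no further justification is needed.
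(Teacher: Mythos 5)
Your proof is correct and is precisely the argument the paper leaves implicit when it says the corollary is "immediately deduced" from Lemma~\ref{lem:intertwin} and the row-stochasticity of $\Psi$: you compute $\tilde\pi\Psi P=\tilde\pi\tilde P\Psi=\tilde\pi\Psi$, check total mass via $\Psi\mathbf{1}=\mathbf{1}$, and invoke uniqueness from Proposition~\ref{prop:finirred}. Nothing is missing.
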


Our route to Theorem~\ref{thm:prob-juggle} is now clear. We want to find an
explicit expression for $\tilde{\pi}$, then compute $\tilde{\pi}
\Psi$. We first need to introduce some definitions and notations.

\begin{Def}
  \label{Def:arch}
  Let $\sigma$ be an enriched state and $s,t$ two integers such that
  $1 \leq s < t \leq H$. We say that the pair $(s,t)$ is an
  \emph{arch} of $\sigma$ if $s$ and $t$ belong to the same block
  $\beta$ of $\sigma$, while no integer strictly between $s$ and $t$
  belongs to $\beta$.
\end{Def}

Note that $\sigma \in \mathcal{S}(H,K)$ has exactly $\ell=H-K$ arches.
\begin{Not}
For $1 \leq s < t \leq H$, we denote by $C_\sigma(s,t)$ the number of
blocks containing at least one element in $\{s,s+1,\ldots,t\}$ (when $(s,t)$
is an arch, then we say that these blocks are those \emph{covered} by
$(s,t)$). 
\label{ZCsigma}
\end{Not}

We are now ready to express the stationary distribution of
the enriched chain, see Figure~\ref{fig:chain42} again for an
illustration in the case $H=4$, $K=2$.

\begin{lem} \label{lem:tildewdef}
  For $\sigma \in \mathcal{S}(H,K)$, the monomial
  \begin{equation} \label{eq:tildewdef}
    \tilde{w}(\sigma) = \prod_{(s,t) \text{ arch of } \sigma} x_{K-C_\sigma(s,t)}
  \end{equation}
  defines an unnormalized stationary measure of the enriched chain.
\end{lem}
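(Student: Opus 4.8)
The plan is to verify directly that $\tilde{w}$ satisfies the balance equation $\tilde{w}(\tau) = \sum_{\sigma} \tilde{w}(\sigma)\tilde{P}_{\sigma,\tau}$ for every $\tau \in \mathcal{S}(H,K)$, by enumerating the predecessors of $\tau$ under the enriched chain. First I would analyze the structure of $\sigma \mapsto \sigma^\downarrow$ and its inverses. A predecessor $\sigma$ of $\tau$ is either a state with $\{1\}\in\sigma$ mapping to $\tau = \sigma^\downarrow \cup \{H\}$ (which forces $\{H\}$ to be a singleton block of $\tau$, and then $\sigma$ is obtained uniquely by shifting $\tau$ up and adjoining the singleton $\{1\}$, with $\tilde{P}_{\sigma,\tau}=1$), or a state with $\{1\}\notin\sigma$ and $\tau = I_i(\sigma^\downarrow)$ for some $i$. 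In the latter case $H$ lies in a non-singleton block $\beta$ of $\tau$; removing $H$ from $\beta$, shifting everything up by $1$, and inserting $1$ into one of the existing blocks (any of the $K$ blocks) produces the candidate predecessor, and $i = C_\tau(\text{position of }H\text{ in block ordering})$ is determined by which block we insert $1$ into. So the predecessors are naturally indexed by a choice of which block of $\tau$ receives the element $1$ after the up-shift.

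Next I would compute the ratio $\tilde{w}(\sigma)/\tilde{w}(\tau)$ for each predecessor, by tracking how arches and their covering numbers $C(s,t)$ change. The key observation is that shifting all labels by one is an order-isomorphism and does not change which pairs are arches or how many blocks they cover, so the only changes come from (a) inserting or removing the smallest element $1$, and (b) inserting or removing the largest element $H$. Inserting $1$ into a block creates exactly one new arch $(1,t)$ where $t$ is the current minimum of that block; the factor it contributes is $x_{K - C_\sigma(1,t)}$, and $C_\sigma(1,t)$ counts the blocks whose minimum is $\le t$, which — because blocks are ordered by maxima in the definition of $I_i$ and by minima here — needs to be matched carefully against the index $i$. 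Removing $H$ from its block destroys exactly one arch (the one ending at $H$) whose covered-block count relates to $i$ via $C_\tau = K - \text{(something)}$; this is where the telescoping structure of the argument will appear, mirroring the telescoping sum in the proof of Theorem~\ref{thm:finstat}. I expect that when $\{1\}\in\sigma$ (the probability-$1$ transition), the arch-count bookkeeping gives $\tilde{w}(\sigma) = \prod (y_{\lambda_i} - x_{\lambda_i})$-type expression, and when $\{1\}$ is inserted into the $j$-th block one gets a product with one factor $x_i$ replacing a larger factor, exactly parallel to \eqref{eq:finstatverif}.

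Finally I would assemble: $\sum_\sigma \tilde{w}(\sigma)\tilde{P}_{\sigma,\tau}$ becomes a sum of $\tilde{w}(\tau)$ times ratios, and after writing each $x_i = y_i - y_{i+1}$ (using $y_m = \sum_{j\ge m} x_j$, so $x_i = y_i - y_{i+1}$ with $y_{K}=0$) the sum telescopes to $\tilde{w}(\tau)$. Concretely, if $\tau$ has its element $H$ in the block that is $c$-th in the max-ordering, the predecessor via $I_i$ exists for $i$ in a suitable range, and the weights arrange so that $\sum_i (\text{ratio})_i$ collapses. The main obstacle, I expect, is the combinatorial bookkeeping of the covering numbers $C_\sigma(s,t)$ under the $\sigma \mapsto \sigma^\downarrow$ and $I_i$ operations — in particular reconciling the two different block orderings (by maxima versus by minima) that appear in the definition of $I_i$ versus in the natural description of which block an arch covers. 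Getting this index translation exactly right, so that the telescoping is genuinely over consecutive values of the $y$'s, is the crux; once it is set up correctly the algebra is the same telescoping sum already used for Theorem~\ref{thm:finstat}. (One should also double-check the edge cases: when $\{H\}$ is a singleton of $\tau$ there is an extra predecessor from the probability-$1$ branch, and when $\tau = \tilde{\mathcal{E}}$ the self-loop must be counted.)
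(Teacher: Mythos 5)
Your predecessor enumeration is exactly the paper's: the two mutually exclusive cases ($\{H\}$ a singleton of $\tau$, with a unique probability-$1$ predecessor, versus $H$ in a non-singleton block, so that $\tau=I_i(\rho)$ for a unique pair $(\rho,i)$ and the predecessors are the $K$ states obtained by up-shifting $\rho$ and inserting $1$ into one of its blocks), together with the observation that the up/down shift preserves arches and their covering numbers, constitute the whole combinatorial content. One small correction: these two branches never feed the same target $\tau$ (a state $I_i(\rho)$ never has $\{H\}$ as a singleton), so there is no ``extra'' predecessor to account for in the singleton case.

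Where the plan goes astray is in the anticipated algebra, and this is a genuine confusion rather than a stylistic difference. The enriched weight $\tilde w$ is a \emph{monomial} in the $x_i$'s; no $y$'s appear at the level of the enriched chain (they only arise after summing over fibers of $\psi$, i.e., in Lemma~\ref{lem:preimag}). Your expectation that the probability-$1$ predecessor has a weight of the form $\prod(y_{\lambda_i}-x_{\lambda_i})$, and that inserting $1$ ``replaces a larger factor by $x_i$'', is therefore wrong: inserting $1$ into a block simply \emph{multiplies} $\tilde w(\rho)$ by the single new factor $x_j$ coming from the one new arch $(1,t)$, and inserting $H$ multiplies it by the factor $x_i$ of the one new arch ending at $H$. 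Consequently there is no telescoping and no need to reconcile the max-ordering used in $I_i$ with the min-ordering governing the arch $(1,t)$: all $K$ predecessors reach $\tau$ with the \emph{same} probability $x_i$, their weights are $x_0\tilde w(\rho),\ldots,x_k\tilde w(\rho)$ in some order (which order is irrelevant, since you sum over all of them), and the balance equation collapses to $\bigl(\sum_{j} x_j\bigr)\,x_i\,\tilde w(\rho)=x_i\,\tilde w(\rho)=\tilde w(\tau)$ using only $x_0+\cdots+x_k=1$. The telescoping in the $y$'s that you describe is the verification for the \emph{projected} chain in \eqref{eq:finstatverif}; the entire point of the enrichment is that it replaces that telescoping by the trivial identity $\sum_j x_j=1$. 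Your skeleton would still lead to a correct proof once the weights are computed honestly, but the step you identify as the crux (the index translation needed to make a telescoping sum work) is a misdiagnosis of where the content lies.
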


\begin{rem}
  \label{rem:notarem}
  It is here natural to introduce the notation $z_i=x_{K-i}$,
  $i=1,\ldots,K$, so that $z_i$ is simply the weight per arch covering
  $i$ blocks. In the juggling language, $z_i$ is the probability of
  doing an insertion at the $i$-th available site starting from the
  right.  This notation will be useful for the ``add-drop'' and
  ``annihilation'' generalizations of the MJMC, see
  Sections~\ref{subsec:juggle-adddrop} and
  \ref{subsec:juggle-annihilation}. However, the $x_i$ notation is
  more convenient to study the $k \to \infty$ limit, as we do
  in Sections~\ref{subsec:unb} and \ref{subsec:inf}.
\end{rem}

Noting that, for $\tilde{\mathcal{E}}$ as in \eqref{eq:tildeEdef}, we
have $\tilde{w}(\tilde{\mathcal{E}})=x_0^\ell$, we find that
\begin{equation}
  Z_{h,k}=\sum_{\sigma \in \mathcal{S}(H,K)} \tilde{w}(\sigma) \label{eq:Zsetdef}
\end{equation}
is positive whenever $x_0>0$, so that $\tilde{\pi}=\tilde{w}/Z_{h,k}$
is the stationary probability distribution of the enriched chain.  For
$B \in \mathrm{St}_{h,k}$, we set
\begin{equation}
  w(B) = \sum_{\sigma \in \psi^{-1}(B)} \tilde{w}(\sigma),
\end{equation}
that is $w=\tilde{w}\Psi$. This implies that  $\pi=w/Z_{h,k}$ is the stationary
probability distribution of the MJMC by Corollary~\ref{cor:interstat}.
Theorem~\ref{thm:prob-juggle} then follows immediately from:

\begin{lem}
  \label{lem:preimag}
  For $B=b_1 \cdots b_h \in \mathrm{St}_{h,k}$, we have
  \begin{equation} \label{eq:preimag}
    w(B) = \prod_{\substack{i\in \{1,\ldots,h\} \\ b_i =
        \bullet}} \left( x_{E_i(B)} + \cdots + x_k \right).
  \end{equation}
  where $E_i(B) = \# \{j<i | b_j = \circ \}$.
\end{lem}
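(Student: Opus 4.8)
The plan is to compute $w(B) = \sum_{\sigma \in \psi^{-1}(B)} \tilde w(\sigma)$ by analyzing exactly which set partitions $\sigma \in \mathcal{S}(H,K)$ satisfy $\psi(\sigma) = B$ and then summing the monomial weights over them. Recall that $\psi(\sigma) = B$ means precisely that the block maxima of $\sigma$ among $\{1,\dots,h\}$ are exactly the positions $i$ with $b_i = \circ$, while $H = h+1$ is automatically a block maximum (the global maximum) of every $\sigma \in \mathcal{S}(H,K)$. The weight $\tilde w(\sigma) = \prod_{(s,t) \text{ arch of } \sigma} x_{K - C_\sigma(s,t)}$ factors over the $\ell = H - K$ arches, and each arch $(s,t)$ has its right endpoint $t$ at a block maximum of $\sigma$ restricted to $\{1,\dots,H\}$, i.e. $t$ is either one of the $\circ$-positions of $B$ or $t = H$; meanwhile the left endpoint $s$ is any non-maximal element of $t$'s block, so $s$ ranges over positions $i \leq t-1$ with $b_i = \bullet$ (more precisely $s$ is the predecessor of $t$ within its block). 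The cleanest way to organize the sum is therefore to build $\sigma$ arch-by-arch, scanning positions from right to left, or equivalently to set up a recursion on $h$.

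The key observation that makes the sum collapse is the following: for a fixed position $i$ with $b_i = \bullet$, when we decide which block $i$ lies in, the contribution to the weight is $x_{K - C_\sigma(s,t)}$ where $(s,t)$ is the arch with $s = i$. If $i$ is the $m$-th largest non-maximal element among the elements lying in blocks whose maximum is $> i$ — equivalently, thinking of the ``available slots'' to the right — then the set of possible values of $C_\sigma(i,t)$ as the block-choice varies ranges over $\{E_i(B) + 1, \dots, K\}$ in a way that, after summing over the choices and applying the identity $x_j + x_{j+1} + \cdots + x_k = y_j$, yields exactly the factor $x_{E_i(B)} + \cdots + x_k$. So the plan is: (i) characterize $\psi^{-1}(B)$ by a sequential choice procedure; (ii) show that at the step where we place a $\bullet$-position $i$, there are exactly $k + 1 - E_i(B) = K - E_i(B)$ legal ``block assignments'' whose weights are precisely $x_{E_i(B)}, x_{E_i(B)+1}, \dots, x_k$ (one each); (iii) conclude by distributivity that $w(B) = \prod_{i : b_i = \bullet}(x_{E_i(B)} + \cdots + x_k)$. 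A clean way to run (i)--(iii) rigorously is induction on $h$: remove the largest position, distinguish whether $h$ is a $\circ$ (block maximum, so it forms/ends a block) or a $\bullet$, track how $E_i$ and the available-block count shift, and invoke the recursion $Z_{h,k} = Z_{h-1,k-1} + (x_0+\cdots+x_k) Z_{h-1,k}$ from Lemma~\ref{lem:Zsetrec} as a consistency check on the bookkeeping; alternatively one builds $\sigma$ by reading $B$ from left to right, maintaining the current set of ``open'' blocks (those not yet closed by a maximum).

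The main obstacle I anticipate is purely bookkeeping: correctly matching the index $K - C_\sigma(s,t)$ appearing in $\tilde w$ with the quantity $E_i(B) = \#\{j < i : b_j = \circ\}$ appearing in the target formula, keeping straight that $C_\sigma$ counts blocks meeting an \emph{interval} while $E_i$ counts $\circ$'s strictly to the \emph{left}, and that blocks are ordered by their maxima (not minima). Concretely, when $s = i$ with $b_i = \bullet$ and the arch is $(i,t)$, the blocks \emph{not} covered by $(i,t)$ and lying ``to the left'' are exactly those whose maximum is $< i$, and one must check that every value of $C_\sigma(i,t)$ in $\{E_i(B)+1,\dots,K\}$ is attained exactly once as the legal block-assignments vary — this amounts to a small combinatorial lemma about reconstructing set partitions from the word $B$ and a sequence of ``which available block'' choices. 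Once that dictionary is pinned down, the sum is a straightforward product of geometric-type sums and the result follows.
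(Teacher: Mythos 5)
Your overall strategy --- enumerate $\psi^{-1}(B)$ by choosing, for each $i$ with $b_i=\bullet$, which block $i$ attaches to, and show that this choice contributes a weight in $\{x_{E_i(B)},\ldots,x_k\}$, each value attained exactly once and independently of the other choices --- is sound and can be made to work. But two of the concrete assertions you make are wrong, and the one fact that carries the whole proof is deferred as ``a small combinatorial lemma'' in a form that is itself false. First, it is not true that every arch has its right endpoint at a block maximum: the block $\{1,3,4\}$ has arches $(1,3)$ and $(3,4)$, and $3$ is not its maximum. (Your parenthetical --- $s$ is the predecessor of $t$ in its block --- is the correct picture: blocks are chains of arches, and arches are in bijection with the $\bullet$-positions via their \emph{left} endpoints.) Second, and more seriously, the range you assert for $C_\sigma(i,t)$ is wrong: it is $\{1,\ldots,K-E_i(B)\}$, not $\{E_i(B)+1,\ldots,K\}$. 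With your range the arch weights $x_{K-C}$ would sum to $x_0+\cdots+x_{k-E_i(B)}$, which is not the factor $x_{E_i(B)}+\cdots+x_k$ of \eqref{eq:preimag}; running your plan as written therefore does not reach the stated formula. (A quick check on $B=\bullet\circ\bullet$, $K=2$: the forced arch $(3,4)$ has $C=1$, not $C=E_3(B)+1=2$.)

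The missing lemma, correctly stated, is this: process positions from right to left; at a position $i$ with $b_i=\bullet$ the blocks already begun are exactly those whose maximum exceeds $i$, and there are $K-E_i(B)$ of them. List their least elements $t^{(1)}<\cdots<t^{(r)}$ (all $>i$, $r=K-E_i(B)$). Attaching $i$ to the $p$-th block creates the arch $(i,t^{(p)})$, and the blocks of the final partition meeting $[i,t^{(p)}]$ are precisely the $p$ blocks containing one of $t^{(1)},\ldots,t^{(p)}$ (blocks with maximum below $i$ lie entirely to the left, elements added later at positions below $i$ are irrelevant, and the remaining begun blocks have no element below $t^{(p)}$). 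Hence $C_\sigma(i,t^{(p)})=p$ \emph{independently of all other choices}, the arch weight is $x_{K-p}$, and the sum over $\psi^{-1}(B)$ factorizes as $\prod_{b_i=\bullet}\sum_{p=1}^{K-E_i(B)}x_{K-p}=\prod_{b_i=\bullet}\left(x_{E_i(B)}+\cdots+x_k\right)$. This independence claim is the entire content of the lemma and must be proved, not asserted. For comparison, the paper sidesteps the simultaneous factorization by inducting on $\ell$: it peels off only the \emph{smallest} $\bullet$-position $i$ (so that everything to its left consists of singletons), exhibits a weight-compatible bijection $\psi^{-1}(B)\cong\psi^{-1}(B')\times\{E_i(B),\ldots,k\}$, and applies the induction hypothesis, which localizes the bookkeeping to one position at a time.
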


\begin{exmp}
  Returning again to the case $H=4$, $K=2$ (i.e.\ $h=3$, $k=1$)
  illustrated on Figure~\ref{fig:chain42}, we find
  \begin{equation}
    w(\circ\bullet\bullet) = x_1^2, \qquad
    w(\bullet\circ\bullet) = (x_0+x_1)x_1=x_1, \qquad
    w(\bullet\bullet\circ) = (x_0+x_1)^2=1.
  \end{equation}
\end{exmp}

\begin{rem}
  The unnormalized weights $\tilde{w}(\sigma)$ are well-defined for
  arbitrary $x_0,\ldots,\allowbreak x_k$ (not necessarily positive or subject to
  the condition $x_0+\cdots+x_k=1$). For $x_0=\cdots=x_k=1$, we recover
  Warrington's combinatorial proof of the identity \eqref{eq:Zunif}.
  For $x_i=q^{k-i}$, we have $\tilde{w}(\sigma)=q^{N(\sigma)}$ with
  \begin{equation}
    N(\sigma)=\sum_{(s,t) \text{ arch of } \sigma} (C_\sigma(s,t)-1)\label{eq:Ndef}.
  \end{equation}
  The identity \eqref{eq:Zqanal} shows that $N(\sigma)$ is a so-called
  Mahonian statistic on set partitions, which seems different from the
  inversion number and the major index \cite{Sagan}.
\end{rem}

\subsubsection{Technical proofs}
\label{sec:enrichedproofs}

We now prove Lemmas \ref{lem:intertwin}, \ref{lem:tildewdef} and
\ref{lem:preimag}.

\begin{proofof}{Lemma~\ref{lem:intertwin}}
  We check the relation \eqref{eq:intertwin} coefficientwise.  Fix
  $\sigma \in \mathcal{S}(H,K)$ and $B \in \mathrm{St}_{h,k}$, and let
  $A = a_1 \cdots a_h = \psi(\sigma)$, where $\psi(\sigma)$ is defined in the beginning of Section~\ref{sec:enricheddef}. Observe that
  \begin{equation} \label{eq:psicond1}
    \psi(\sigma^\downarrow \cup \{H\})=a_2 \cdots a_h \circ
  \end{equation}
  and that, for all $i$,
  \begin{equation} \label{eq:psicond2}
    \psi(I_{i}(\sigma^\downarrow))=T_i(a_2 \cdots a_h \circ)
  \end{equation}
  (since inserting $H$ into the $(i+1)$-th block of
  $\sigma^\downarrow$ transforms its maximum into a non-maximum,
  which means replacing the $(i+1)$-th occurrence of $\circ$ in
  $\psi(\sigma^\downarrow)\circ=a_2 \cdots a_h \circ$ by a
  $\bullet$).  If $\{1\} \in \sigma$ (i.e.\ $a_1=\circ$), then, by the
  definition of the transition matrices and \eqref{eq:psicond1},
  \begin{equation}
    (\tilde{P} \Psi)_{\sigma,B} =
    \delta_{\psi(\sigma^\downarrow) \cup \{H\}),B} =
    \delta_{a_2 \cdots a_h \circ,B} = P_{A,B} = (\Psi P)_{\sigma,B}.
  \end{equation}
  Otherwise, if $\{1\} \notin \sigma$ (i.e.\ $a_1=\bullet$), we may write
  \begin{equation}
    \begin{split}
      (\tilde{P} \Psi)_{\sigma,B} &= \sum_{\substack{\tau \in
          \mathcal{S}(H,K) \\ i \in \{0,\ldots,k\}}} x_i
      \delta_{I_i(\sigma^\downarrow),\tau} \delta_{\psi(\tau),B} =
      \sum_{i=0}^k
      x_i \delta_{\psi(I_i(\sigma^\downarrow)),B} \\
      &= \sum_{i=0}^k x_i \delta_{T_i(a_2 \cdots a_h \circ),B} = P_{A,B}
      = (\Psi P)_{\sigma,B} \\
    \end{split}
  \end{equation}
  where we use \eqref{eq:psicond2} to go from the first to the second
  line. In both cases, we have the wanted relation.
\end{proofof}

\begin{proofof}{Lemma~\ref{lem:tildewdef}}
  We need to show that
  \begin{equation} \label{eq:enrichSC} \tilde{w}(\sigma) = \sum_{\tau
      \in \mathcal{S}(H,K)} \tilde{w}(\tau) \tilde{P}_{\tau,\sigma}
  \end{equation}
  which is done by considering the possible predecessors of $\sigma$
  in the enriched chain. If $\{H\} \in \sigma$ then $\sigma$ has a
  unique precedessor $\tau$ such that $\sigma = \tau^\downarrow \cup
  \{H\}$. The arches of $\sigma$ and $\tau$ are clearly in one-to-one
  correspondence, and their weights are unchanged, thus
  $\tilde{w}(\sigma)=\tilde{w}(\tau)$ as wanted. Otherwise, there
  exists a unique pair $(\rho,i) \in \mathcal{S}(h,K) \times
  \{0,\ldots,k\}$ such that $\sigma=I_i(\rho)$ and it is easily seen
  that $\tilde{w}(\sigma)=x_i \tilde{w}(\rho)$ (since inserting $H$ in
  the $(i+1)$-th block of $\rho$ amounts to creating an arch covering
  $K-i$ blocks, the other arch weights being unaffected). The
  predecessors of $\sigma$ are the $\tau$ such that
  $\tau^\downarrow=\rho$ and $\{1\} \notin \tau$, and we have
  $\tilde{P}_{\tau,\sigma}=x_i$ regardless of $\tau$. There are
  exactly $K$ predecessors, whose weights are $x_j \tilde{w}(\rho)$
  with $j \in \{0,\ldots,k\}$ (indeed all these predecessors are
  obtained by shifting all elements of all blocks of $\rho$ up by one,
  which preserves the arch weights, then inserting $1$ into one of the
  $K$ blocks, which creates a new arch covering an arbitrary number of
  blocks between $1$ and $K$). From the condition $x_0+\ldots+x_k=1$,
  we conclude that \eqref{eq:enrichSC} holds as wanted.
\end{proofof}

\begin{proofof}{Lemma~\ref{lem:preimag}}
  We proceed by induction on $\ell=h-k=H-K$. For $\ell=0$, the
  statement is true since $B=\circ^h$ has only one preimage, namely
  the set partition consisting of singletons 
  $\{\{1\},\{2\},\ldots,\{H\}\}$ which has no arch,
  thus has a weight $1$ consistent with \eqref{eq:preimag}. Let us
  now assume that $\ell>0$ and consider the smallest $i$ such that
  $b_i=\bullet$, i.e.\ $B=\circ^{i-1}\bullet b_{i+1} \ldots b_h$. Let
  $B' \in \mathrm{St}_{h,k+1}$ be the word obtained by replacing the
  $i$-th letter of $B$ by $\circ$, i.e. $B'=\circ^i b_{i+1} \ldots
  b_h$.

  Consider $\tau \in \psi^{-1}(B)$: it is easily seen from the
  definition of $\psi$ that, for all $j<i$, the singleton $\{j\}$ is
  necessarily a block of $\tau$, but the block $\beta$ containing $i$
  contains at least another (larger) element, thus $\tau$ has an arch
  of the form $(i,i')$ with weight $\tilde{w}_{i,i'}=x_n$ for some $n
  \geq E_i(B)$.

  Splitting $\beta$ in two blocks $\{i\}$ and $\beta \setminus \{i\}$,
  we obtain a set partition $\tau' \in \mathcal{S}(H,K+1)$, and a
  moment's thought shows that $\tau \mapsto (\tau',n)$ is a bijection
  between $\psi^{-1}(B)$ and $\psi^{-1}(B') \times
  \{E_i(B),\ldots,k\}$ such that $\tilde{w}(\tau)=x_n
  \tilde{w}(\tau')$. By the induction hypothesis we have
  \begin{equation}
    w(B') = \sum_{\tau' \in \psi^{-1}(B')} \tilde{w}(\tau') =
    \prod_{\substack{j\in \{i+1,\ldots,h\} \\ b_j = \bullet}}
    \left( x_{E_j(B)} + \cdots + x_k \right).
  \end{equation}
  Multiplying this relation by $x_n$ and summing over $n$, the
  desired relation \eqref{eq:preimag} follows.
\end{proofof}

\section{Extensions to infinite state spaces}
\label{sec:infextensions}

In this section, we discuss extensions of the
Multivariate Juggling Markov Chain to an infinite setting. More precisely,
we first let the number of available sites $k$ tend to infinity,
keeping the number of balls $\ell$ fixed. This is the Unbounded MJMC
discussed in Section \ref{subsec:unb}. Further, we 
consider the case where the number of balls $\ell$ tends to infinity
(with $k$ infinite or not), which corresponds to the Infinite MJMC
discussed in Section \ref{subsec:inf}.

\subsection{Unbounded heights}
\label{subsec:unb}

As suggested in the conclusion of \cite{Warrington}, a first natural
extension is to allow Magnus to throw balls arbitrarily high, so
that the ball flight times are unbounded. This corresponds to taking
the limit $h \to \infty$ of the MJMC, keeping the number of balls
$\ell$ fixed. In the particle picture, the sites are now labelled by
the set of positive integers, and exactly $\ell$ sites are occupied by
a particle. The time evolution is essentially unchanged: at each time
step, all particles are moved one site to the left, and if there was a
particle on the first site, it is reinserted at an available site
anywhere on the lattice. We keep the MJMC prescription of choosing the
$i$-th available site with probability $x_{i-1}$, but since there are
now infinitely many available sites, we have an infinite sequence
$(x_i)_{i \geq 0}$ of nonnegative real numbers such that
$\sum_{i=0}^\infty x_i=1$.

Formally, states can be viewed as infinite words on the alphabet
$\{\bullet,\circ\}$ containing exactly $\ell$ occurrences of
$\bullet$.  We denote by $\mathrm{St}^{(\ell)}$ the set of such states
(which can be viewed as the direct limit of the set sequence
$(\mathrm{St}_{h,h-\ell})_{h \geq \ell}$). For $A \in
\mathrm{St}^{(\ell-1)}$ and $i$ a nonnegative integer, we let $T_i(A)
\in \mathrm{St}^{(\ell)}$ be the word obtained by replacing the
$(i+1)$-th occurrence of $\circ$ in $A$ by $\bullet$. 

\begin{Def}
  Given a nonnegative integer $\ell$ and a sequence $(x_i)_{i \geq 0}$
  of nonnegative real numbers such that $\sum_{i=0}^\infty x_i=1$, the
  \emph{Unbounded Multivariate Juggling Markov Chain} (UMJMC) is the
  Markov chain on the state space $\mathrm{St}^{(\ell)}$ for which the
  transition probability from $A=a_1a_2a_3\cdots$ to $B$ reads
  \begin{equation}
    P_{A,B}=
    \begin{cases}
      1 & \text{if $a_1=\circ$ and $B=a_2 a_3\cdots$,} \\
      x_i & \text{if $a_1=\bullet$ and $B=T_i(a_2 a_3\cdots)$,} \\
      0 & \text{otherwise.}
    \end{cases}
    \label{eq:ujugp}
  \end{equation}
\end{Def}

Again, we might identify $\mathrm{St}^{(\ell)}$ with the set of
$\ell$-element subsets of $\mathbb{N}$ (corresponding to the positions
of the occupied sites). For two such subsets $S$ and $S'$, the UMJMC
transition probability from $S$ to $S'$ is still given by
\eqref{eq:MJMCsubset}. Our model is thus a particular case of the
general model considered in \cite{LeskelaVarpanen}.
The following proposition is an immediate extension of
Proposition~\ref{prop:finirred}.

\begin{prop} \label{prop:unbirred} If $x_0>0$, then the UMJMC has a
  unique closed communicating class all of whose states are
  aperiodic. Furthermore, if infinitely many $x_i$'s are nonzero, then
  the UMJMC is irreducible.
\end{prop}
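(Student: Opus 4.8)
The plan is to mimic the proof of Proposition~\ref{prop:finirred} almost verbatim, taking care of the two places where finiteness was implicitly used: the existence of a ``lowest'' state reachable from everywhere, and the finite-length path used for irreducibility.

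First I would define the analogue of the ``lowest'' state $\mathcal{E}$. In the infinite setting the natural candidate is $\mathcal{E}^{(\ell)}=\bullet^\ell\circ^\infty$, i.e.\ the $\ell$-element subset $\{1,\ldots,\ell\}$ of $\mathbb{N}$. Exactly as before, from any state $A\in\mathrm{St}^{(\ell)}$ one reaches $\mathcal{E}^{(\ell)}$ by always reinserting at the first available site: this uses only transitions of probability $x_0$ (when $a_1=\bullet$) or $1$ (when $a_1=\circ$). The only subtlety is that one must check this takes \emph{finitely} many steps; but the position of the rightmost $\bullet$ is non-increasing under ``reinsert at the first available site'' and strictly decreases whenever the leftmost site is occupied and the configuration is not already $\mathcal{E}^{(\ell)}$, so $\mathcal{E}^{(\ell)}$ is reached in at most (position of the rightmost $\bullet$ in $A$) steps. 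Since $\mathcal{E}^{(\ell)}$ has a self-loop of probability $x_0>0$, it is aperiodic and lies in a closed communicating class; and since every state can reach it, this is the unique closed communicating class and all its states inherit aperiodicity (each can return to itself via $\mathcal{E}^{(\ell)}$ in two different coprime-difference loop lengths, or simply: a state communicating with an aperiodic state in a recurrent-type class is aperiodic).

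For the second assertion, assume infinitely many $x_i$ are nonzero; I want to show every state is reachable from $\mathcal{E}^{(\ell)}$, which combined with the first part gives irreducibility. Given a target $A=a_1a_2a_3\cdots\in\mathrm{St}^{(\ell)}$, only finitely many letters before the last $\bullet$ matter: let $N$ be the position of the rightmost $\bullet$ of $A$, so $a_{N+1}a_{N+2}\cdots=\circ^\infty$. I would run essentially the construction from Proposition~\ref{prop:finirred}: build the length-$i$ prefix of $A$ at step $i$, for $i=0,\ldots,N$, keeping the remaining $\ell-(\text{number of }\bullet\text{ placed})$ balls parked as far left as possible in the ``tail''. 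Concretely set $A_i=\bullet^{\ell-i+n_i}\circ^{\,?}\,a_1\cdots a_i$ with $n_i=\#\{j\le i: a_j=\circ\}$ (same formula as before, the tail of $\circ$'s now being infinite). When $a_{i+1}=\circ$ one uses a transition of probability $x_0$ (or $1$ if no balls are parked), and when $a_{i+1}=\bullet$ one must reinsert at the available site whose index equals the number of $\circ$'s among $a_1\cdots a_{i+1}$ plus the infinitely many in the tail minus one — here is where the hypothesis enters: \emph{but} in fact the required reinsertion index is the number of $\circ$'s strictly to the left of position $i+1$ in the target, which is at most $N$, so only insertion sites with index $\le N$ are ever used. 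Hence I should phrase the hypothesis check carefully: we only need finitely many of the $x_i$ (those with $i\le N$, or rather $i\le$ some bound depending on $A$) to be positive along this particular path — and since infinitely many $x_i$ are nonzero it is \emph{not} immediate that the needed ones are. The clean fix: observe that any state is also reachable from $\mathcal{E}^{(\ell)}$ if we are allowed to ``overshoot'' — alternatively, and more robustly, show that from $\mathcal{E}^{(\ell)}$ one can reach the state $\bullet^{\ell-1}\circ^{m}\bullet\circ^\infty$ for every $m$ with $x_m>0$, then reach states with $\bullet$'s at arbitrary positions by iterating, using that we can always get a fresh ball into the first slot and re-throw it. This shows the closed class contains states with balls arbitrarily far out iff infinitely many $x_i$ are nonzero; combined with the prefix-building argument (which only needs insertions at bounded indices once the far-out balls are correctly placed and we clean up) every $A$ is reached.

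I expect the main obstacle to be precisely this matching of ``which $x_i$ do we actually need positive'' with the hypothesis ``infinitely many are nonzero'': the finite-chain proof used $x_k>0$ for the single largest index, and naively transplanting it would demand $x_i>0$ for a \emph{specific} $i$ depending on $A$, which the hypothesis does not supply. So the real content of the proof is reorganizing the reachability argument so that it uses the available large-index $x_i$'s (there are infinitely many, hence arbitrarily large ones) to push balls out to the required positions, rather than a single designated one. Once that reorganization is in place, everything else is a routine transcription of Proposition~\ref{prop:finirred}, so I would keep the write-up short: state the lowest state $\mathcal{E}^{(\ell)}$, invoke the finite-step reachability for the first assertion, and for the second give the push-out-and-rearrange argument, remarking that only finitely many transitions and finitely many distinct insertion indices are involved for each target state.
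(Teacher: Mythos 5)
Your treatment of the first assertion is correct and complete: the monotonicity argument showing that repeated reinsertion at the first available site reaches $\mathcal{E}^{(\ell)}=\bullet^\ell\circ^\infty$ in finitely many steps is exactly the point that needs checking in the infinite setting, and you check it properly. (For the record, the paper offers no written proof here --- it declares the proposition an ``immediate extension'' of Proposition~\ref{prop:finirred} --- so the burden of supplying these details is entirely on you, and for the first assertion you meet it.)

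For irreducibility, however, you correctly diagnose the obstacle but do not actually remove it. Your ``clean fix'' --- first reach $\bullet^{\ell-1}\circ^{m}\bullet\circ^\infty$ for some $m$ with $x_m>0$, then ``iterate'' and ``clean up'' --- faces the very difficulty it is meant to circumvent: once a ball is thrown, it only drifts left, so to land ball $j$ at position $s_j$ at the final time $T$ you must throw it at some time $t_j$ to the available site at position $s_j+(T-t_j)$, and nothing in your sketch guarantees that the index of that site is one for which $x_i>0$. The scheduling problem is the whole content of the second assertion, and it is left unproved. The resolution is in fact one observation away, and it is worth stating precisely: let $N$ be the position of the rightmost $\bullet$ of the target $A$, and choose a single $k\geq N-\ell$ with $x_k>0$ (possible because infinitely many $x_i$ are nonzero, hence arbitrarily large such indices exist). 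Then $A$, truncated to positions $1,\ldots,\ell+k$, is a state of $\mathrm{St}_{\ell+k,k}$, and the path $A_0=\mathcal{E},A_1,\ldots,A_{\ell+k}=A$ constructed in the proof of Proposition~\ref{prop:finirred} for this $h=\ell+k$ is verbatim a positive-probability path of the UMJMC: for any intermediate state all balls lie in the window $\{1,\ldots,\ell+k\}$, so its first $k+1$ available sites also lie in that window, and each finite-chain transition of probability $1$, $x_0$ or $x_k$ coincides with a UMJMC transition of the same probability. This uses exactly one well-chosen large index per target state, rather than the ``push out then rearrange'' scheme, and turns your plan into a proof.
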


\textbf{From now on, we assume that $x_0>0$.} Note that, if
$k=\sup\{i: x_i>0\}$ is finite, then no insertion is ever made at a
position larger than $k$. Thus, the states with particles at positions
larger than $k$ are transient, and upon removing them we recover the
finite MJMC on $\mathrm{St}_{\ell+k,k}$. In this degenerate case, all
the forthcoming statements remain true, but were already established
in Section~\ref{sec:juggle}.

We are again interested in the stationary distribution, but the fact
that the state space is now countably infinite requires a bit of
care. Still, we might extend Theorem~\ref{thm:prob-juggle} in the
following form.

\begin{thm} \label{thm:unb-prob-juggle}
  The unique invariant measure (up to constant of proportionality) of the UMJMC is given by
  \begin{equation}
    \label{eq:unb-prob-juggle}
    w(B) = \prod_{i\in \mathbb{N},\,b_i = \bullet}
    y_{E_i(B)}
  \end{equation}
  where $B=b_1 b_2 \cdots \in \mathrm{St}^{(\ell)}$, $E_i(B)
  = \# \{j<i | b_j = \circ \}$ and $y_m=\sum_{j=m}^\infty x_j$.
\end{thm}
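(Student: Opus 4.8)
The plan is to prove two things: first that the formula $w$ defines an invariant measure of the UMJMC, and second that it is the only one up to a multiplicative constant.

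For invariance I would pass to the integer-partition reformulation of Section~\ref{subsec:mjmcpar}, extended to the present setting: a state becomes a partition $\lambda=(\lambda_1\geq\cdots\geq\lambda_\ell\geq 0)$ with at most $\ell$ parts, now of unbounded size, and the transition rule \eqref{eq:part} carries over verbatim except that the reinsertion index $i$ ranges over all of $\mathbb{N}$. In this language $w(\lambda)=\prod_{j=1}^{\ell}y_{\lambda_j}$ is a finite product of numbers in $(0,1]$, hence a well-defined positive measure, and since $\lambda$ has at most $\ell+1$ predecessors the balance relation $w(\lambda)=\sum_{\mu}w(\mu)P_{\mu,\lambda}$ is a finite sum, so there is no convergence issue to address. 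The verification of this relation is then exactly the telescoping computation in the proof of Theorem~\ref{thm:finstat}: one lists the predecessors $\mu=(\lambda_1+1,\dots,\lambda_\ell+1)$ (transition of weight $1$) and $\mu=(\lambda_1+1,\dots,\lambda_{j-1}+1,\lambda_{j+1},\dots,\lambda_\ell,0)$ for $1\leq j\leq\ell$ (transition $x_{\lambda_j}$), uses $y_{m+1}=y_m-x_m$, and observes that $\sum_{j}x_{\lambda_j}\prod_{i<j}(y_{\lambda_i}-x_{\lambda_i})\prod_{i>j}y_{\lambda_i}=\prod_i y_{\lambda_i}-\prod_i(y_{\lambda_i}-x_{\lambda_i})$ telescopes; if anything the computation is cleaner than in the finite case, there being no ``$\lambda_1=k$'' boundary term.

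For uniqueness I would first dispose of the degenerate case: as already noted in the text, if only finitely many $x_i$ are nonzero the chain reduces to a finite MJMC, which has a unique invariant probability measure. So assume infinitely many $x_i$ are nonzero; then by Proposition~\ref{prop:unbirred} the UMJMC is irreducible, and for an irreducible chain the existence of a recurrent state already forces the invariant measure to be unique up to a positive scalar. Thus everything reduces to showing the UMJMC is recurrent. When the reinsertion law has finite mean $\bar h:=\sum_{i\geq 0}i\,x_i<\infty$, this is easy: the total mass $Z=\sum_\lambda w(\lambda)=h_\ell(y_0,y_1,\dots)$ is finite, because $\sum_{m\geq 0}y_m=1+\bar h<\infty$ so the complete homogeneous symmetric function of the summable sequence $(y_m)_{m\geq 0}$ converges (compare Proposition~\ref{prop:Zhform}); hence $w/Z$ is an invariant probability measure, the chain is positive recurrent, and uniqueness follows (one may also invoke here the convergence result of \cite{LeskelaVarpanen}).

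The main obstacle is the complementary case $\bar h=\infty$, in which $w$ is not normalizable and the chain is at best null recurrent, so the positive-recurrence shortcut is unavailable and recurrence must be established by hand. Here I would try to show that from every state the chain reaches the ground state $\mathcal{E}=\bullet^\ell\circ\circ\cdots$ almost surely, by tracking the largest part $\lambda_1$: a $\circ$-step, or a reinsertion at the lowest available site, decreases $\lambda_1$ by one whenever $\lambda_1\geq 1$, whereas a reinsertion with index $i$ leaves $\lambda_1\leq\max(\lambda_1-1,i)$, so $\lambda_1$ can only increase at reinsertion steps and then by at most the $(x_i)$-distributed index; one then needs a renewal-type estimate showing that, despite these upward jumps, $\lambda_1$ returns to $0$ almost surely. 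The delicate point is precisely that when $\bar h=\infty$ the upward jumps are heavy-tailed and the naive uniform lower bound on a clean descent to $0$ (a product of factors $1-y_v$) degenerates; a more careful argument is needed, or else an approximation by the chains obtained by capping the reinsertion index at some $k$ — these reduce to finite, positive-recurrent MJMCs whose stationary measures are exactly the restrictions of $w$ — combined with a vague-limit argument. Once recurrence is secured, the theorem follows.
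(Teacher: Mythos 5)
Your invariance check is exactly the paper's argument: the paper likewise passes to the integer-partition picture (Theorem~\ref{thm:heightstat}), lists the $\ell+1$ possible predecessors of $\lambda$, and telescopes via $x_{\lambda_j}=y_{\lambda_j}-(y_{\lambda_j}-x_{\lambda_j})$ exactly as in \eqref{eq:finstatverif}; your observations that the balance equation is a finite sum and that the $\lambda_1=k$ boundary case disappears are both correct. So the existence half of your proposal is complete and matches the paper.

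The gap is in uniqueness, and while you have located the difficulty honestly, your proposed repair cannot succeed as stated. You reduce uniqueness to recurrence via the standard fact that an irreducible recurrent chain has a one-dimensional cone of invariant measures. But recurrence is precisely what fails in general: the paper's own remark after Example~\ref{exmp:unbgeom} says that for $x_i\sim i^{-1-\alpha}$ with $\alpha\in(0,1)$ the UMJMC is expected to be \emph{transient} whenever $\ell(1-\alpha)>1$, and that deciding this is beyond the paper's scope. Hence your Lyapunov-style tracking of $\lambda_1$ must break down in that regime, and no amount of care will establish recurrence there. For a transient irreducible chain, uniqueness of the invariant measure is a genuinely different question (it is governed by the Martin exit boundary of the time-reversed chain), and your capping/vague-limit idea does not address it either: showing that the stationary measures of the truncated chains converge to $w$ proves that $w$ is invariant, not that nothing else is. In fairness, the paper's own proof also only verifies invariance and asserts uniqueness without argument; but since the statement contains the word ``unique,'' the accurate summary is that your proposal establishes \eqref{eq:unb-prob-juggle} as an invariant measure in all cases, establishes uniqueness when \eqref{eq:fincond} holds (positive recurrence) or when only finitely many $x_i$ are nonzero (reduction to the finite MJMC), and leaves uniqueness genuinely open in the heavy-tailed case $\sum_i i x_i=\infty$ with $\ell\geq 2$.
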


We prove this theorem by a straightforward extension of the discussion
of Section~\ref{subsec:mjmcpar}. In the integer partition language, the
extension simply consists in lifting the bound on part sizes. We
denote by $\mathrm{Par}_{\infty, \ell}$ the set of integer partitions
with at most $\ell$ parts and unbounded part sizes (which we can view
as nonincreasing sequences of $\ell$ nonnegative integers). This set
is naturally identified with $\mathrm{St}^{(\ell)}$: given a state in
$\mathrm{St}^{(\ell)}$, we denote by
$s_1<s_2<\ldots<s_{\ell-1}<s_\ell$ the positions of $\bullet$'s, so that
the corresponding integer partition is
$(s_\ell-\ell,s_{\ell-1}-(\ell-1),\ldots,s_1-1)$. We let the
reader verify that, in the integer partition language, the UMJMC
transitions are still given by
\eqref{eq:part}. Theorem~\ref{thm:unb-prob-juggle} may then be
reformulated as follows.

\begin{thm} \label{thm:heightstat}
  The unique invariant measure (up to constant of proportionality) of the UMJMC is given by
  \begin{equation} \label{eq:unbwparform}
    w(\lambda) = \prod_{i=1}^\ell y_{\lambda_i}
  \end{equation}
  where $\lambda=(\lambda_1,\ldots,\lambda_\ell) \in
  \mathrm{Par}_{\infty, \ell}$.
\end{thm}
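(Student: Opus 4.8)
The plan is to mimic the proof of Theorem~\ref{thm:finstat} almost verbatim, since the partition-valued description of the transitions \eqref{eq:part} is unchanged and the only genuinely new issue is that the state space $\mathrm{Par}_{\infty,\ell}$ is now infinite. First I would establish that $w(\lambda)=\prod_{i=1}^{\ell} y_{\lambda_i}$ is an \emph{invariant measure}, i.e.\ that $w(\lambda)=\sum_{\mu} w(\mu) P_{\mu,\lambda}$. The key observation is that, for a fixed target $\lambda=(\lambda_1,\ldots,\lambda_\ell)$, there are only \emph{finitely many} predecessors $\mu$ (exactly the $\ell+1$ predecessors enumerated in the proof of Theorem~\ref{thm:finstat}: the ``$\circ$-predecessor'' $(\lambda_1+1,\ldots,\lambda_\ell+1)$ and the $\ell$ ``$\bullet$-predecessors'' $(\lambda_1+1,\ldots,\lambda_{j-1}+1,\lambda_{j+1},\ldots,\lambda_\ell,0)$ for $1\le j\le \ell$), all lying in $\mathrm{Par}_{\infty,\ell}$. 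Consequently the balance equation at $\lambda$ is a finite sum and the telescoping identity \eqref{eq:finstatverif} — using $y_0=1$, the relation $y_{\lambda_i+1}=y_{\lambda_i}-x_{\lambda_i}$, and writing $x_{\lambda_j}=y_{\lambda_j}-(y_{\lambda_j}-x_{\lambda_j})$ — goes through word for word. One should note that since $y_m$ now denotes an infinite sum $\sum_{j\ge m} x_j$, the identity $y_{m+1}=y_m-x_m$ still holds (the tail sums converge because $\sum x_i=1$), so no convergence subtlety arises in this step.

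Next I would address \emph{uniqueness} of the invariant measure up to scaling. Here I would invoke Proposition~\ref{prop:unbirred}: under the standing hypothesis $x_0>0$ the UMJMC has a unique closed communicating class $\mathcal{C}$ whose states are aperiodic. On $\mathcal{C}$ the chain is irreducible, so by the standard theory of countable Markov chains the invariant measure of the restricted chain is unique up to a multiplicative constant (this is true for any irreducible chain, recurrent or transient, as long as an invariant measure exists). Since $w$ is manifestly positive on all of $\mathrm{Par}_{\infty,\ell}$ and in particular on $\mathcal{C}$, and since any invariant measure must be supported on $\mathcal{C}$ (states outside $\mathcal{C}$ lead into $\mathcal{C}$ and are transient, hence carry zero mass for an invariant measure — one can see directly that $\mathcal{C}$ consists of the states reachable from the ``lowest'' state $\mathcal{E}$, equivalently those with no particle at a position exceeding $k=\sup\{i:x_i>0\}$ when $k<\infty$, and is all of $\mathrm{Par}_{\infty,\ell}$ when infinitely many $x_i$ are nonzero), we conclude that $w$ restricted to $\mathcal{C}$ is the unique invariant measure up to scaling, and extends by zero to a (the) invariant measure on the whole space. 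I would also remark here, as the paper does before the theorem, that in the degenerate case $k<\infty$ this reduces to the already-proven finite statement, so it suffices to treat the nondegenerate case.

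The main obstacle — really the only point requiring care beyond copying Section~\ref{subsec:mjmcpar} — is the question of whether $w$ can be \emph{normalized} to a probability distribution, i.e.\ whether $\sum_{\lambda\in\mathrm{Par}_{\infty,\ell}} \prod_i y_{\lambda_i}$ is finite. Exactly as in Remark~\ref{rem:Zparform}, this sum equals the formal $h_\ell$ evaluated at the infinite alphabet $(y_0,y_1,y_2,\ldots)$, namely $\sum_{m_0,m_1,\ldots\ge 0,\ \sum m_i=\ell}\prod_i y_i^{m_i}$; since $y_0=1$ already the term with $m_0=\ell$ contributes $1$ and, more to the point, all partitions with all parts equal to $0$ or with arbitrarily many parts equal to $0$ but the sequence $y_i\to 0$ only if... — in fact the series diverges in general (e.g.\ when infinitely many $x_i>0$, infinitely many $\lambda$ have $\prod y_{\lambda_i}$ bounded below), so in general there is \emph{no} stationary probability distribution, only an invariant measure. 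This is precisely why the theorem is phrased in terms of an invariant measure ``up to constant of proportionality'' rather than a stationary distribution, and I would state explicitly that finiteness of the normalization — hence positive recurrence — holds if and only if $h_\ell(y_0,y_1,\ldots)<\infty$, deferring a sharper analysis of this condition (and the ultrafast-convergence phenomenon) to the subsequent discussion. The proof of Theorem~\ref{thm:heightstat} therefore consists of: (i) the verbatim telescoping computation showing invariance, and (ii) the appeal to irreducibility on the unique closed class for uniqueness; Theorem~\ref{thm:unb-prob-juggle} then follows by transporting the result through the bijection $\mathrm{St}^{(\ell)}\leftrightarrow\mathrm{Par}_{\infty,\ell}$, exactly as Theorem~\ref{thm:prob-juggle} followed from Theorem~\ref{thm:finstat}.
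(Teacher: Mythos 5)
Your verification that $w$ is invariant is exactly the paper's proof: the paper likewise observes that each $\lambda$ has only the $\ell+1$ predecessors you list (the ``$\circ$-predecessor'' and the $\ell$ ``$\bullet$-predecessors''), notes that the telescoping computation of \eqref{eq:finstatverif} goes through unchanged, and stops there. So the core of your proposal is correct and matches the paper, which is in fact silent both on uniqueness and on normalizability (the latter being the object of the separate proposition that follows the theorem, not of the theorem itself).

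The one genuine problem is in your uniqueness paragraph. The parenthetical claim that the invariant measure of an irreducible countable chain is unique up to a multiplicative constant ``recurrent or transient, as long as an invariant measure exists'' is false: uniqueness up to scaling is a theorem for irreducible \emph{recurrent} chains, and it can fail for transient ones. For instance, the biased simple random walk on $\mathbb{Z}$ with $P(n,n+1)=p\neq 1/2$ is irreducible and transient, yet both the counting measure and $n\mapsto (p/q)^n$ are invariant. This matters here because, as the remark at the end of Section~\ref{subsec:unb} points out, the UMJMC is expected to be transient for suitable heavy-tailed $(x_i)$ when $\ell>1$, so recurrence cannot be invoked in general; one would need either to restrict the uniqueness assertion to the recurrent case or to give a chain-specific argument (the paper supplies neither, so nothing in its proof is available to patch this step). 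A secondary slip, in the tangential normalizability discussion: it is not true that the series diverges whenever infinitely many $x_i$ are positive — the geometric case $x_i=(1-q)q^i$ has all $x_i>0$ and finite total mass $h_\ell(y_0,y_1,\ldots)$, consistently with the criterion $\sum i x_i<\infty$ of the following proposition.
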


\begin{proof} We simply check that
  \begin{equation}
    \label{eq:unbstat}
    w(\lambda)=\sum_{\mu \in \mathrm{Par}_{\infty,\ell}} w(\mu) P_{\mu,\lambda}.
  \end{equation}
  Any predecessor $\mu$ of $\lambda$ is either:
  \begin{itemize}
  \item of the form
    $\mu=(\lambda_1+1,\ldots,\lambda_{\ell}+1)$ in which
    case we have $w(\mu) = \prod_{i=1}^{\ell} y_{\lambda_i+1} =
     \prod_{i=1}^\ell (y_{\lambda_i}-x_{\lambda_i})$ and
    $P_{\mu,\lambda} = 1$ ;
  \item of the form
    $\mu=(\lambda_1+1,\ldots,\lambda_{j-1}+1,\lambda_{j+1},\ldots,\lambda_{\ell},0)$
    for some $j \in \{1,\ldots,\ell\}$ in which case we have $w(\mu) =
    \prod_{i=1}^{j-1} (y_{\lambda_i}-x_{\lambda_i})
    \prod_{i=j+1}^{\ell} y_{\lambda_i}$ and $P_{\mu,\lambda} =
    x_{\lambda_j}$.
  \end{itemize}
  The verification of \eqref{eq:unbstat} is then done exactly as in
  \eqref{eq:finstatverif}.
\end{proof}

We have exhibited an invariant measure of the UMJMC which is clearly
$\sigma$-finite. We might wonder whether it is actually finite, so
that it may be normalized into a probability distribution.

\begin{prop}
  The invariant measure $w$ of the UMJMC is finite if and only if
  \begin{equation}
    \label{eq:fincond}
    \sum_{i=0}^\infty i x_i < \infty
  \end{equation}
  in which case its total mass reads
  \begin{equation}
    \label{eq:Zunb}
    Z^{(\ell)} = h_{\ell}(y_0,y_1,\ldots).
  \end{equation}
\end{prop}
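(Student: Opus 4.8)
The plan is to reduce the finiteness question to the convergence of the complete homogeneous symmetric function $h_\ell$ in the infinitely many variables $y_0,y_1,\ldots$, mirroring Remark~\ref{rem:Zparform}. First I would note that $w$ is manifestly $\sigma$-finite (each state has a finite weight), so the only issue is the finiteness of the total mass $\sum_{\lambda\in\mathrm{Par}_{\infty,\ell}} w(\lambda)$. Since every term is nonnegative, I may reindex the sum by the part multiplicities: writing $m_i$ for the number of parts of $\lambda$ equal to $i$, the condition $\lambda\in\mathrm{Par}_{\infty,\ell}$ becomes ``$(m_i)_{i\geq0}$ is a finitely supported sequence of nonnegative integers with $\sum_i m_i=\ell$'', and $w(\lambda)=\prod_{i\geq0} y_i^{m_i}$. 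Hence
\[
  \sum_{\lambda\in\mathrm{Par}_{\infty,\ell}} w(\lambda)
  = \sum_{\substack{(m_i)_{i\geq0}\ \text{finitely supported}\\ \sum_i m_i=\ell}}\ \prod_{i\geq0} y_i^{m_i}
  = h_\ell(y_0,y_1,\ldots),
\]
an identity valid in $[0,+\infty]$; in particular, once finiteness is established, this already gives $Z^{(\ell)}=h_\ell(y_0,y_1,\ldots)$.

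Next I would exploit the two elementary facts that $y_0\geq y_1\geq y_2\geq\cdots\geq0$ (because $y_m-y_{m+1}=x_m\geq0$) and $y_0=\sum_{j\geq0}x_j=1$. From $y_0=1$ one obtains the sandwich
\[
  \sum_{i\geq0} y_i \ \leq\ h_\ell(y_0,y_1,\ldots)\ \leq\ \Big(\sum_{i\geq0} y_i\Big)^{\!\ell},
\]
where the lower bound keeps only the monomials $y_0^{\ell-1}y_n=y_n$ inside the defining sum of $h_\ell$ (the tuple $0\leq\cdots\leq0\leq n$ being admissible, for each $n\geq0$), and the upper bound simply drops the ordering constraint $i_1\leq\cdots\leq i_\ell$. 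Consequently $h_\ell(y_0,y_1,\ldots)$ is finite if and only if $\sum_{i\geq0} y_i$ is finite.

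Finally I would evaluate $\sum_{i\geq0} y_i$ by Tonelli's theorem (legitimate since all $x_j\geq0$):
\[
  \sum_{i\geq0} y_i = \sum_{i\geq0}\sum_{j\geq i} x_j = \sum_{j\geq0} (j+1)\,x_j = 1 + \sum_{j\geq0} j\,x_j,
\]
so $\sum_{i\geq0} y_i<\infty$ exactly when $\sum_{j\geq0} j\,x_j<\infty$. Combining the three steps yields the stated equivalence, and in the convergent case the total mass of $w$ equals $h_\ell(y_0,y_1,\ldots)$. I do not anticipate a genuine obstacle: the only point requiring care is that every interchange of summation above is applied to a series of nonnegative terms, so all the identities and inequalities hold unconditionally in $[0,+\infty]$ and no prior convergence hypothesis is needed to justify them.
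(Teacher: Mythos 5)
Your proof is correct and follows essentially the same route as the paper: the finiteness criterion rests on the identical sandwich $\sum_i y_i \leq Z^{(\ell)} \leq \bigl(\sum_i y_i\bigr)^{\ell}$ (your two bounds are exactly the paper's ``partitions with at most one nonzero part'' and ``$\mathrm{Par}_{\infty,\ell}\subset\mathbb{N}^{\ell}$''), together with $\sum_i y_i = \sum_j (j+1)x_j$. The only variation is that you obtain $Z^{(\ell)}=h_\ell(y_0,y_1,\ldots)$ by reindexing over part multiplicities directly in $[0,\infty]$, whereas the paper lets $k\to\infty$ in the finite formula; your version has the mild advantage of not needing the paper's power-sum digression to make sense of $h_\ell$ in infinitely many variables.
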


\begin{proof}
  Let $Z^{(\ell)} \in [0,\infty]$ be the total mass of $w$. Observe
  that
  \begin{equation}
    Z^{(1)} = \sum_{\lambda_1=0}^\infty y_{\lambda_1} = \sum_{i=0}^\infty (i+1) x_i
  \end{equation}
  thus the first assertion is obviously true for $\ell=1$. It remains
  true for general $\ell$ as we have the inequalities
  \begin{equation}
    Z^{(1)} \leq Z^{(\ell)} \leq \left( Z^{(1)} \right)^\ell
  \end{equation}
  since $\mathrm{Par}_{\infty,\ell}$ contains the set of partitions
  with at most one nonzero part (which has mass $Z^{(1)}$ by
  \eqref{eq:unbwparform} and the fact that $y_0=1$), but may be viewed
  as a subset of $\mathbb{N}^\ell$, for which we have $\sum_{\lambda
    \in \mathbb{N}^\ell} w(\lambda) = \left( Z^{(1)} \right)^\ell$ by
  \eqref{eq:unbwparform}.

  The identity \eqref{eq:Zunb} is obtained by letting $k \to \infty$
  in \eqref{eq:Zparform}, since $(\mathrm{Par}_{k,\ell})_{k \geq 0}$
  forms an increasing family of sets with union
  $\mathrm{Par}_{\infty,\ell}$ so that $Z_{k+\ell,k} \nearrow
  Z^{(\ell)}$. Note that the notation $h_{\ell}(y_0,y_1,\ldots)$ makes
  sense: $h_\ell$, being a symmetric function, can be expressed as a
  polynomial in the power sum symmetric functions $(p_m)_{m \geq 1}$
  that does not depend on its number of variables, and
  \begin{equation}
    p_m(y_0,y_1,\ldots) = y_0^m + y_1^m + \cdots
  \end{equation}
  is a finite real number for any $m \geq 1$ by \eqref{eq:fincond}.
\end{proof}

By standard results from the theory of Markov chains, see
e.g. \cite[Chapter 13]{LeGall} or \cite[Chapter
21]{LevinPeresWilmer}, we deduce:

\begin{cor}
  The UMJMC is positive recurrent if and only if \eqref{eq:fincond}
  holds. In that case, there is a unique stationary probability
  distribution, and the chain started from any initial state converges
  to it in total variation as time tends to infinity.
\end{cor}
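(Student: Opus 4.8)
The plan is to read off the statement from three ingredients already at our disposal: the explicit $\sigma$-finite invariant measure $w$ of Theorem~\ref{thm:heightstat}, the finiteness criterion for $w$ established in the proposition just above, and the classical positive‑recurrence/convergence dichotomy for Markov chains on a countable state space. First I would dispose of the degenerate case in which only finitely many $x_i$ are nonzero. As already noted after the statement of Theorem~\ref{thm:unb-prob-juggle}, if $k=\sup\{i:x_i>0\}<\infty$ then, after deleting the transient states having a particle at a position larger than $k$, the UMJMC restricts to the finite MJMC on $\mathrm{St}_{\ell+k,k}$; condition~\eqref{eq:fincond} holds automatically (the sum is finite), and positive recurrence, existence and uniqueness of the stationary distribution, and total‑variation convergence were all obtained in Section~\ref{sec:juggle} — the closed communicating class is unique and aperiodic by Proposition~\ref{prop:finirred} since $x_0>0$, and a finite chain with a single closed aperiodic class converges to its unique stationary distribution. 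So from now on one assumes infinitely many $x_i$ are nonzero, in which case Proposition~\ref{prop:unbirred} guarantees that the UMJMC is irreducible and aperiodic.

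Next I would invoke the standard fact that an irreducible Markov chain on a countable state space is positive recurrent if and only if it admits an invariant probability distribution, in which case that invariant probability is unique (see e.g.\ \cite[Chapter 13]{LeGall} or \cite[Chapter 21]{LevinPeresWilmer}). Combining this with Theorem~\ref{thm:heightstat}, which provides the invariant measure $w$ and asserts its uniqueness up to a multiplicative constant, gives the desired equivalence. Indeed, if \eqref{eq:fincond} holds then the preceding proposition says $w$ has finite total mass $Z^{(\ell)}$, so $\pi:=w/Z^{(\ell)}$ is an invariant probability distribution and the chain is positive recurrent; conversely, if the chain is positive recurrent it carries an invariant probability distribution $\pi$, which by the uniqueness in Theorem~\ref{thm:heightstat} must be proportional to $w$, forcing $w$ to have finite total mass, i.e.\ \eqref{eq:fincond}. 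In the positive recurrent case $\pi=w/Z^{(\ell)}$ is then the unique stationary probability distribution, and the convergence theorem for irreducible aperiodic positive recurrent chains yields that the chain started from any initial state converges to $\pi$ in total variation as time tends to infinity.

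I do not expect a genuine obstacle here: the corollary is essentially a packaging of results already in hand. The only points requiring a little care are the bookkeeping around reducibility — cleanly separating the genuinely infinite irreducible situation, handled by the countable‑state dichotomy, from the degenerate finite one, handled in Section~\ref{sec:juggle} — and quoting the classical recurrence and convergence theorems in the precise form needed. Everything else follows directly from the invariant measure already exhibited and its finiteness criterion.
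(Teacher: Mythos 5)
Your argument is correct and is exactly the route the paper takes: the paper's proof of this corollary is a one-line appeal to the standard positive-recurrence/convergence theory for countable-state chains, which you have simply unpacked (irreducibility and aperiodicity from Proposition~\ref{prop:unbirred}, the invariant measure and its finiteness criterion from the preceding results, and the classical equivalence between positive recurrence and existence of an invariant probability). Your explicit treatment of the degenerate case $\sup\{i:x_i>0\}<\infty$ matches the remark the paper makes just before Theorem~\ref{thm:unb-prob-juggle}, so nothing is missing.
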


\begin{exmp}
  \label{exmp:unbgeom}
  Fix $q \in (0,1)$ and pick $x_i=(1-q)q^i$. We recover the ``JEP with
  memoryless height distribution'' with parameter $q$ considered in
  \cite{LeskelaVarpanen}. Clearly \eqref{eq:fincond} holds and we
  recover from Theorem~\ref{thm:unb-prob-juggle} Leskel\"a-Varpanen's
  expression for the stationary probability
  distribution. Interestingly, in the integer partition language, we
  find that $w(\lambda)=q^{|\lambda|}$ for
  $\lambda\in\mathrm{Par}_{\infty,\ell}$, where $|\lambda|$ stands for
  the size (sum of all parts) of $\lambda$.
\end{exmp}

\begin{rem}
  It is interesting to note that \eqref{eq:fincond} is a necessary and
  sufficient condition for the UMJMC to be uniformly integrable in the
  sense of \cite{LeskelaVarpanen}. When \eqref{eq:fincond} does not
  hold, the chain may either be null recurrent or transient. For
  $\ell=1$, the state with the first site occupied is clearly
  recurrent, thus the chain is null recurrent. Figuring out the
  situation for $\ell > 1$ is an intriguing question. By somewhat
  heuristic arguments inspired by Poly\'a's theorem on recurrence of random walks, we expect that,
  for a sequence $x_i$ decaying asymptotically as $i^{-1-\alpha}$ with
  $\alpha \in (0,1)$ (so that $\sum i x_i=\infty$), the UMJMC is null
  recurrent if $\ell(1-\alpha) \leq 1$ and transient if $\ell
  (1-\alpha)>1$. A formal proof of this statement is beyond the scope
  of this paper.
\end{rem}

\subsection{Infinitely many balls}
\label{subsec:inf}

We now consider another limit in which Magnus juggles with infinitely
many balls ($\ell \to \infty$). Let us first heuristically discuss
this limit in the particle picture, a precise mathematical statement
(Proposition~\ref{prop:imjmcconv}) being given at the end of this section.
Since the particles tend to
accumulate on the left side of the lattice (i.e.\ balls tend to have
low remaining flight times), we expect that, as $\ell$ becomes large,
all sites at a finite distance from the first site will be occupied
with high probability (in particular, Magnus receives a ball at every
time step), and the first available site will then be typically at a
position of order $\ell$. It is convenient to relabel the sites by
arbitrary (not necessarily positive) integers, so that the leftmost
site has label $-\ell$, and the lowest state corresponds to having
particles on sites $-\ell,-\ell+1,\ldots,-1$. In the limit $\ell \to
\infty$, the leftmost site is sent to $-\infty$, and the lowest state
corresponds to having all negative sites filled and all nonnegative
sites empty.

A general state is obtained by moving finitely many particles from
occupied to empty sites, in other words it is a bi-infinite word
$B=(b_i)_{i \in \mathbb{Z}} \in \{\bullet,\circ\}^\mathbb{Z}$ such
that
\begin{equation}
  \label{eq:infbal}
  \{ i \geq 0: b_i=\bullet \} = \{ i<0: b_i=\circ \} < \infty
\end{equation}
(such a word is sometimes called a ``Maya diagram''). The time
evolution is now easy to describe: at each time step, all particles
are moved one site to the left and a new particle is inserted at an
available site (so that the condition \eqref{eq:infbal} is preserved).
Again, we keep the MJMC prescription of picking the $i$-th available
site from the left with probability $x_{i-1}$ (this is well-defined
since the set of available positions is bounded from below).

Rather than writing down the transitions formally, we prefer to work
directly in the language of integer partitions. It is well-known that
bi-infinite words subject to \eqref{eq:infbal} are in one-to-one
correspondence with arbitrary integer partitions (i.e.\ non-increasing
sequences of integers that vanish eventually), see again
Figure~\ref{fig:bijection_states_partitions} and think about extending
the displayed juggling state by adding infinitely many $\bullet$'s on
the left and infinitely many $\circ$'s on the right, which does not
change the corresponding integer partition. We denote by
$\mathrm{Par}$ the set of all integer partitions and, for
$\lambda=(\lambda_j)_{j\geq 1} \in \mathrm{Par}$ and $i \geq 0$, we
set
\begin{equation}
  \label{eq:inftrans}
  \lambda^{(i)}=(\lambda_1-1,\ldots,\lambda_{j-1}-1,i,\lambda_j,\lambda_{j+1},\ldots)
\end{equation}
with $j$ the smallest index such that $\lambda_j\leq i$. This is the
$\mathrm{Par}$ equivalent of the above time evolution with insertion
at the $(i+1)$-th available site.

\begin{Def}
  \label{Def:inf}
  Given a sequence $(x_i)_{i \geq 0}$ of nonnegative real numbers such
  that $\sum_{i=0}^\infty x_i=1$, the \emph{Infinite Multivariate
    Juggling Markov Chain} (IMJMC) is the Markov chain on the state
  space $\mathrm{Par}$ for which the transition probability from
  $\lambda$ to $\mu$ reads
  \begin{equation}
    \label{eq:infdef}
    P_{\lambda,\mu} =
    \begin{cases}
      x_i & \text{if $\mu=\lambda^{(i)}$,}\\
      0 & \text{otherwise.}
    \end{cases}
  \end{equation}
\end{Def}

The existence and uniqueness (up to normalization) of an invariant
measure of the IMJMC is ensured by the following:

\begin{prop}
  If $x_0>0$, then the IMJMC has a unique closed communicating class,
  whose all states are aperiodic. They are precisely the integer
  partitions whose parts are smaller than or equal to $\sup \{i: x_i>0\}$. In
  particular, if infinitely many $x_i$'s are nonzero, then the IMJMC is
  irreducible.
\end{prop}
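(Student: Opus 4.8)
The plan is to follow the strategy used for Propositions~\ref{prop:finirred} and \ref{prop:enrichirred}: exhibit a ``lowest'' state that has a self-loop and is reachable from everywhere, and then determine the set of states reachable from it. Here the lowest state is the empty partition $\emptyset$. First I would observe that $\emptyset^{(0)}=\emptyset$, so that $\emptyset$ has a self-loop of probability $x_0>0$ and is therefore aperiodic. Next, for an arbitrary $\lambda\in\mathrm{Par}$ with largest part $\lambda_1$, applying the $i=0$ transition replaces $\lambda$ by $\lambda^{(0)}$, which is obtained simply by decrementing every nonzero part; thus the largest part drops by one, and after $\lambda_1$ such transitions (each of probability $x_0>0$) one reaches $\emptyset$. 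Consequently, writing $R$ for the set of states reachable from $\emptyset$: $R$ is closed (a positive-probability step out of a state of $R$ stays in $R$), any two states of $R$ communicate through $\emptyset$, and any closed set contains $\emptyset$ hence contains $R$; so $R$ is the unique closed communicating class, and it is aperiodic since it contains $\emptyset$.

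It remains to identify $R$ with $\mathrm{Par}_{\le k}$, the set of partitions all of whose parts are $\le k:=\sup\{i:x_i>0\}$ (with the convention $\mathrm{Par}_{\le\infty}=\mathrm{Par}$). The inclusion $R\subseteq\mathrm{Par}_{\le k}$ is immediate by induction along paths from $\emptyset$: every positive-probability transition uses an index $i\le k$, and passing from $\lambda$ to $\lambda^{(i)}$ inserts the part $i\le k$ while only decrementing the parts larger than $i$, so it cannot create a part exceeding $k$.

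The substantive point is the reverse inclusion $\mathrm{Par}_{\le k}\subseteq R$, and this is where the real work lies. When $k<\infty$ we have $x_k>0$ (a nonempty bounded subset of $\mathbb{N}$ attains its supremum), so besides the $i=0$ move we may also use the $i=k$ move, which on $\mathrm{Par}_{\le k}$ simply prepends a part equal to $k$ (since every part is $\le k$, the insertion index $j$ in \eqref{eq:inftrans} equals $1$). I would then prove by induction on the number of parts $r$ that every $\lambda=(\lambda_1\ge\cdots\ge\lambda_r)\in\mathrm{Par}_{\le k}$ is reachable from $\emptyset$ using only these two moves: given $\lambda$, set $d=k-\lambda_1\ge 0$ and $\nu=(\lambda_2+d,\ldots,\lambda_r+d)$, which lies in $\mathrm{Par}_{\le k}$ and has fewer parts, hence is reachable by the induction hypothesis; applying the $i=k$ move to $\nu$ produces $(\lambda_1+d,\lambda_2+d,\ldots,\lambda_r+d)=\lambda+d\,\mathbf 1$, and applying the $i=0$ move $d$ times then turns each part $\lambda_i+d$ into $\max(\lambda_i+d-d,0)=\lambda_i$, i.e.\ recovers $\lambda$. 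The only point requiring care is this final bookkeeping, namely that $d$ iterations of ``decrement every nonzero part'' send a part of size $v$ to $\max(v-d,0)$. Finally, when $k=\infty$ (infinitely many $x_i$ positive), given any $\lambda\in\mathrm{Par}$ with largest part $N$, I would pick some $k'\ge N$ with $x_{k'}>0$ and run exactly the same construction with $k$ replaced by $k'$, concluding that $R=\mathrm{Par}$, i.e.\ that the chain is irreducible. I expect the construction of these explicit paths (the reachability lemma) to be the main obstacle; the rest is routine Markov-chain bookkeeping.
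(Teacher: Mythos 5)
Your proposal is correct and follows essentially the same route as the paper: the empty partition is the fixed point reached from everywhere by repeated $x_0$ moves, and arbitrary partitions with parts at most $k$ are built from $\emptyset$ using only the $x_0$ and $x_k$ transitions. Your single induction on the number of parts (reach $\nu$, apply $x_k$ once, then $x_0$ exactly $d=k-\lambda_1$ times) is just the unrolled form of the paper's double induction on the number of parts and on $k-\mu_1$; you are somewhat more explicit than the paper about the forward inclusion $R\subseteq\mathrm{Par}_{\le k}$ and the uniqueness of the closed class, but the substance is the same.
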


\begin{proof}
  It is easily seen that, starting from any initial state and applying
  repeatedly the transition $x_0$, we end up with the empty partition
  (i.e.\ the partition corresponding to an empty Young diagram), which
  is a fixed point. This proves the first statement. For the second
  statement, let us prove that, for any $k$ such that $x_k>0$, we may
  obtain any partition $\mu$ with $\mu_1 \leq k$ from the empty
  partition using only the transitions $x_0$ and $x_k$. We proceed by
  double induction on the number of parts $p$ of $\mu$ (i.e.\ the
  largest $p$ such that $\mu_p>0$) and on $k-\mu_1$. If $p=0$, then
  $\mu$ is already the empty partition. Otherwise, if $k-\mu_1=0$,
  then $\mu$ is obtained from $(\mu_2,\mu_3,\ldots)$ (which has one
  less part) by the transition $x_k$. Finally, if $k-\mu_1>0$, then
  $\mu$ is obtained from $\mu'=(\mu_1+1,\ldots,\mu_p+1,0,0,\ldots)$
  (which still has $p$ parts and $k-\mu'_1<k-\mu_1$) by the transition
  $x_0$.
\end{proof}

\begin{thm}
  \label{thm:heightstatinf}
  An invariant measure of the IMJMC is given by
  \begin{equation} \label{eq:infwparform}
    w(\lambda) = \prod_{i=1}^\infty y_{\lambda_i}
  \end{equation}
  where $\lambda \in \mathrm{Par}$ and $y_m=\sum_{j=m}^{\infty}
  x_i$. Its total mass reads
  \begin{equation}
    \label{eq:Zinf}
    Z = \prod_{m=1}^\infty \frac{1}{1-y_m}
  \end{equation}
  and $Z$ is finite (i.e.\ the IMJMC is positive recurrent) if and
  only if \eqref{eq:fincond} holds.
\end{thm}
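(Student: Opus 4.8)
The plan is to proceed exactly as in the proofs of Theorem~\ref{thm:finstat} and Theorem~\ref{thm:heightstat}, extending the telescoping computation to the set $\mathrm{Par}$ of all integer partitions, and then to handle the total mass by recognizing it as an infinite-variable complete homogeneous symmetric series. First I would verify that $w$ defined by \eqref{eq:infwparform} is a well-defined (finite positive) function on $\mathrm{Par}$: since a partition $\lambda$ has only finitely many nonzero parts and $y_0 = \sum_{j\geq 0} x_j = 1$, the product $\prod_{i\geq 1} y_{\lambda_i}$ is really a finite product $\prod_{i : \lambda_i > 0} y_{\lambda_i}$, hence well-defined. Then I would check the invariance equation $w(\lambda) = \sum_{\mu} w(\mu) P_{\mu,\lambda}$ by enumerating the predecessors $\mu$ of $\lambda$ under the IMJMC. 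By the definition \eqref{eq:inftrans}–\eqref{eq:infdef}, a predecessor $\mu$ satisfies $\lambda = \mu^{(i)}$ for some $i \geq 0$; tracing through the transition, the predecessors of $\lambda = (\lambda_1, \lambda_2, \ldots)$ are precisely $\mu = (\lambda_1+1, \ldots, \lambda_p+1)$ with transition probability $P_{\mu,\lambda} = 1 = x_0 + \cdots = y_0$ when no part is created at value $0$... more carefully, the predecessors are $\mu = (\lambda_1+1, \ldots, \lambda_{j-1}+1, \lambda_{j+1}, \lambda_{j+2}, \ldots)$ obtained by deleting the $j$-th part of $\lambda$ (which must be reinserted as a $0$ in the forward step, so we need $\lambda_j = i$ for the transition labelled $x_i$), for each $j$ with $\lambda_j$ equal to the inserted value, together with $\mu = (\lambda_1+1, \ldots)$ itself; this is the direct analogue of the two bullet cases in the proof of Theorem~\ref{thm:heightstat}, now with no upper bound $k$ on the index. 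The verification of the invariance identity is then the same telescoping sum as in \eqref{eq:finstatverif}, writing $x_{\lambda_j} = y_{\lambda_j} - (y_{\lambda_j} - x_{\lambda_j})$ and using $y_{m+1} = y_m - x_m$; I would simply cite that computation rather than repeat it.

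For the total mass, I would compute
\begin{equation}
  Z = \sum_{\lambda \in \mathrm{Par}} \prod_{i=1}^\infty y_{\lambda_i}
  = \sum_{\substack{(m_1, m_2, \ldots) \in \mathbb{N}^{\mathbb{N}} \\ \text{finitely many } m_i \neq 0}} \prod_{i=1}^\infty y_i^{m_i}
  = \prod_{m=1}^\infty \left( \sum_{n=0}^\infty y_m^n \right) = \prod_{m=1}^\infty \frac{1}{1-y_m},
\end{equation}
where $m_i$ is the multiplicity of the part $i$ in $\lambda$ (parts equal to $0$ contribute a factor $y_0 = 1$ and may be ignored), and the rearrangement of the sum into a product is justified by nonnegativity of all terms (Tonelli), with the convention that the product and hence $Z$ equals $+\infty$ if the geometric series diverges or the infinite product does. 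Finally, for the finiteness criterion, the infinite product $\prod_{m\geq 1} (1-y_m)^{-1}$ converges (to a finite value) if and only if $\sum_{m\geq 1} y_m < \infty$; and by Fubini for nonnegative terms, $\sum_{m=1}^\infty y_m = \sum_{m=1}^\infty \sum_{j=m}^\infty x_j = \sum_{j=1}^\infty j\, x_j$, which is finite precisely when \eqref{eq:fincond} holds. One subtlety to flag: each individual $y_m$ must be strictly less than $1$ for the geometric series to converge, but this is automatic once $x_0 > 0$ (since $y_m \leq y_1 = 1 - x_0 < 1$ for $m \geq 1$); and the equivalence of $Z < \infty$ with positive recurrence is the same standard Markov-chain fact already invoked for the UMJMC (\cite[Chapter 13]{LeGall} or \cite[Chapter 21]{LevinPeresWilmer}), given the irreducibility/aperiodicity established in the preceding proposition.

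The main obstacle I anticipate is purely bookkeeping rather than conceptual: one must be careful that the predecessor analysis in $\mathrm{Par}$ correctly accounts for the fact that inserting a part at value $i$ in the forward transition $\lambda \mapsto \lambda^{(i)}$ places it at the unique position determined by $j = \min\{r : \lambda_r \leq i\}$, so that when reversing one deletes a part of $\lambda$ equal to the inserted value and there may be several such positions (all equal parts), contributing the sum over $j$ exactly as in the finite case — and that the "shift everything down by $1$" bookkeeping produces no spurious negative parts because any predecessor has its deleted part reinserted as a $0$. Once this correspondence is pinned down, the telescoping identity and the product formula for $Z$ follow with no further work.
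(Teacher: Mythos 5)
Your overall strategy (verify invariance by summing over predecessors and telescoping, then compute $Z$ as a product over part multiplicities) is the same as the paper's, and your treatment of the total mass and of the finiteness criterion is correct and, if anything, more detailed than the paper's. The gap is in the invariance step, in two places. First, your predecessor analysis imports a case that does not exist for the IMJMC: there is no probability-$1$ transition here. By \eqref{eq:infdef}, every transition has the form $\lambda\mapsto\lambda^{(i)}$ with probability $x_i$, so \emph{every} predecessor of $\lambda$ belongs to the single family $\mu=(\lambda_1+1,\ldots,\lambda_{j-1}+1,\lambda_{j+1},\lambda_{j+2},\ldots)$ with $P_{\mu,\lambda}=x_{\lambda_j}$, where $j$ runs over \emph{all} positive integers --- including the infinitely many $j$ with $\lambda_j=0$. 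These trailing positions produce infinitely many \emph{distinct} predecessors $(\lambda_1+1,\ldots,\lambda_\ell+1,1,\ldots,1,0,\ldots)$, each reaching $\lambda$ with probability $x_0$ only; their total contribution $x_0\prod_{i\le\ell}y_{\lambda_i+1}\sum_{r\ge0}y_1^{r}=\prod_{i\le\ell}y_{\lambda_i+1}$ is what replaces the probability-$1$ predecessor of Theorem~\ref{thm:heightstat}. Listing ``$\mu=(\lambda_1+1,\ldots)$ itself'' as an extra case with probability $1=y_0$ reproduces the right total by accident but misdescribes the chain (and, depending on how you index the family, double-counts the $j=\ell+1$ predecessor); the verification has to sum $w(\mu)P_{\mu,\lambda}$ over the actual predecessors with the actual probabilities.

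Second, you cannot ``simply cite'' \eqref{eq:finstatverif}: the sum over predecessors is now an infinite series, and the telescoping only yields $\sum_{j=1}^{J}x_{\lambda_j}\prod_{i<j}(y_{\lambda_i}-x_{\lambda_i})\prod_{i>j}y_{\lambda_i}=\prod_{i}y_{\lambda_i}-\prod_{i\le J}(y_{\lambda_i}-x_{\lambda_i})\prod_{i>J}y_{\lambda_i}$, so you must show the remainder vanishes as $J\to\infty$. This is the one genuinely new analytic point of the infinite-ball case, and it needs the estimate $y_{\lambda_i}-x_{\lambda_i}=y_{\lambda_i+1}\le y_1=1-x_0<1$, whence the remainder is at most $y_1^{J}\to0$; the hypothesis $x_0>0$ enters here. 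Both issues are easily repaired, but as written the invariance verification is not yet a proof.
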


\begin{proof}
  Note that the right-hand side of \eqref{eq:infwparform} is well-defined
  since $y_0=1$ and $\lambda_i$ vanishes eventually.  By
  \eqref{eq:inftrans}, any predecessor of $\lambda \in \mathrm{Par}$
  is necessarily of the form
  $\mu=(\lambda_1+1,\ldots,\lambda_{j-1}+1,\lambda_{j+1},\lambda_{j+2},\ldots)$
  for some $j$, and $\lambda=\mu^{(\lambda_j)}$. We then have $w(\mu)
  = \prod_{i=1}^{j-1} (y_{\lambda_i}-x_{\lambda_i})
  \prod_{i=j+1}^{\infty} y_{\lambda_i}$ and $P_{\mu,\lambda} =
  x_{\lambda_j}$. Telescoping as in \eqref{eq:finstatverif}, we deduce
  \begin{equation} \label{eq:infstatverif}
    \begin{split}
      \sum_{\mu \in \mathrm{Par}} w(\mu) P_{\mu,\lambda} &=
      \sum_{j=1}^{\infty} \left(x_{\lambda_j}
        \prod_{i=1}^{j-1}(y_{\lambda_i}-x_{\lambda_i})\prod_{i=j+1}^{\infty}
        y_{\lambda_i}\right) = \prod_{i=1}^\infty y_{\lambda_i} = w(\lambda)
    \end{split}
  \end{equation}
  (note that
  $\prod_{i=1}^{j-1}(y_{\lambda_i}-x_{\lambda_i})\prod_{i=j+1}^{\infty}
  y_{\lambda_i} \leq y_1^{j-1}$ with $y_1<1$). This establishes the
  invariance of $w$.

  The expression \eqref{eq:Zinf} follows from standard considerations
  on integer partitions, and is finite if and only if $\sum y_m <
  \infty$, i.e.\ if \eqref{eq:fincond} holds.
\end{proof}

Again, when \eqref{eq:fincond} does not hold, the IMJMC is either null
recurrent or transient, and it would be interesting to know which
situation occurs.

\begin{exmp} \label{exmp:infgeom}
  Consider again the geometric case $x_i=(1-q)q^i$, $q \in
  (0,1)$. Then, the stationary distribution is nothing but the
  ``$q^{\mathrm{size}}$'' measure over arbitrary integer
  partitions. Note that, contrary to the case of finitely many balls
  (Example~\ref{exmp:unbgeom}), the phenomenon of ultrafast
  convergence to stationarity observed in \cite{LeskelaVarpanen} cannot
  occur: the stationary distribution is supported on the set of all
  integer partitions with arbitrarily many parts, but since we may
  create at most one new part at each time step, the distribution at a
  finite time starting from a given initial partition is supported on
  a strictly smaller set.
\end{exmp}

So far we have not given a precise mathematical meaning to the fact that
the IMJMC is the limit as $\ell \to \infty$ of the (U)MJMC. This is
actually the case according to a certain notion of ``local
convergence'', which again is easier to state in the language of
integer partitions. Let us fix the sequence $(x_i)_{i \geq 0}$ and,
for $\ell \in \mathbb{N} \cup \{\infty\}$, denote by
$\Lambda(\ell;0),\Lambda(\ell;1),\ldots$ the $\mathrm{Par}$-valued
Markov chain corresponding to the UMJMC with $\ell$ balls for $\ell
\in \mathbb{N}$, or the IMJMC for $\ell = \infty$, started at an
arbitrary (deterministic or random) initial state $\Lambda(\ell,0)$.
(Note that the state space $\mathrm{Par}_{\infty, \ell}$ of
$\Lambda(\ell,\cdot)$ with $\ell \in \mathbb{N}$ may naturally be
viewed as a subset of $\mathrm{Par}$, upon appending an infinite zero
sequence to its elements.)

\begin{prop}
  \label{prop:imjmcconv}
  For any nonnegative integer $t$ and any fixed partition $\nu$,
  we have the convergence in distribution
  \begin{equation}
    \label{eq:imjmcconv}
    \left(\Lambda(\ell;0),\ldots,\Lambda(\ell;t)\right)
    \xrightarrow[\ell \to \infty]{(d)}
    \left(\Lambda(\infty;0),\ldots,\Lambda(\infty;t)\right)
  \end{equation}
  when each chain is started at the deterministic state
  $\Lambda(\ell,0)=\nu$ (assuming that $\ell$ is larger than the
  number of parts of $\nu$).

  Furthermore, if the condition \eqref{eq:fincond} for positive
  recurrence is satisfied, then the convergence \eqref{eq:imjmcconv}
  also holds when each chain is started at its stationary probability
  distribution.
\end{prop}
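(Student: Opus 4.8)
The plan is to reduce the whole proposition to a single elementary fact about transition kernels together with the monotone convergence $Z^{(\ell)}\nearrow Z$, after which everything follows from Scheffé's lemma on the countable state space $\mathrm{Par}$. The fact in question, call it $(\star)$, is: \emph{if $\lambda\in\mathrm{Par}$ has strictly fewer than $\ell$ parts, then, viewing $\lambda$ inside $\mathrm{Par}_{\infty,\ell}$, the UMJMC transition probabilities out of $\lambda$ coincide with the IMJMC ones}, i.e.\ $P^{(\ell)}_{\lambda,\mu}=P^{(\infty)}_{\lambda,\mu}$ for all $\mu$ (here $P^{(\ell)}$ and $P^{(\infty)}$ denote the kernels of $\Lambda(\ell;\cdot)$ and $\Lambda(\infty;\cdot)$). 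This is obtained by comparing the second line of \eqref{eq:part} --- which applies precisely because $\lambda$ having fewer than $\ell$ parts means $\lambda_\ell=0$, so the deterministic ``decrease every part by one'' branch does not occur --- with the definition \eqref{eq:inftrans}--\eqref{eq:infdef} of $\lambda^{(i)}$: in both cases the successors are exactly the partitions $\lambda^{(i)}$, $i\geq 0$, reached with probability $x_i$. I would also record the auxiliary monotonicity: in either chain the number of parts increases by at most one per step (a shift never increases it, an insertion increases it by exactly one when $i\geq1$ and not at all when $i=0$), so a trajectory started from a partition with $p$ parts stays within partitions of at most $p+s$ parts after $s$ steps.

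For the first assertion (deterministic start), fix $\nu$ with $p$ parts and $t\geq0$, and take any $\ell>p+t$. By the monotonicity above, both chains started at $\nu$ remain, during the first $t$ steps, supported on partitions with fewer than $\ell$ parts, so by $(\star)$ they use the same transition probabilities at every step; hence $(\Lambda(\ell;0),\ldots,\Lambda(\ell;t))$ and $(\Lambda(\infty;0),\ldots,\Lambda(\infty;t))$ have exactly the same law. In particular the convergence \eqref{eq:imjmcconv} is trivial, the left-hand law being eventually constant in $\ell$. (Equivalently, one may couple the two chains by driving them with the same i.i.d.\ sequence of insertion choices.)

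For the second assertion, assume \eqref{eq:fincond}. First I would show $\pi_\ell\to\pi_\infty$ pointwise on $\mathrm{Par}$, where $\pi_\ell,\pi_\infty$ are the stationary distributions of $\Lambda(\ell;\cdot)$ and $\Lambda(\infty;\cdot)$: because $y_0=1$, the unnormalized weight $w(\lambda)=\prod_i y_{\lambda_i}$ appearing in Theorems~\ref{thm:heightstat} and \ref{thm:heightstatinf} takes the same value whether $\lambda$ is regarded in $\mathrm{Par}_{\infty,\ell}$ (padding with zeros) or in $\mathrm{Par}$, so $\pi_\ell(\lambda)=w(\lambda)/Z^{(\ell)}$ and $\pi_\infty(\lambda)=w(\lambda)/Z$; since $(\mathrm{Par}_{\infty,\ell})_{\ell\geq0}$ is an increasing family of sets with union $\mathrm{Par}$ and $w\geq0$, monotone convergence gives $Z^{(\ell)}\nearrow\sum_{\lambda\in\mathrm{Par}}w(\lambda)=Z$, which is finite and strictly positive under \eqref{eq:fincond}; hence $\pi_\ell(\lambda)\to\pi_\infty(\lambda)$ for each fixed $\lambda$ (once $\ell$ exceeds its number of parts). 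Now fix a tuple $(\lambda^0,\ldots,\lambda^t)\in\mathrm{Par}^{t+1}$ and let $N$ be the largest number of parts among the $\lambda^s$; for $\ell>N$, applying $(\star)$ at each step,
\begin{equation*}
  \mathbb{P}\bigl(\Lambda(\ell;0)=\lambda^0,\ldots,\Lambda(\ell;t)=\lambda^t\bigr)
  =\pi_\ell(\lambda^0)\prod_{s=1}^{t}P^{(\infty)}_{\lambda^{s-1},\lambda^s}
  \longrightarrow
  \pi_\infty(\lambda^0)\prod_{s=1}^{t}P^{(\infty)}_{\lambda^{s-1},\lambda^s},
\end{equation*}
the right-hand side being $\mathbb{P}(\Lambda(\infty;0)=\lambda^0,\ldots,\Lambda(\infty;t)=\lambda^t)$. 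Thus the probability mass functions of the $\mathrm{Par}^{t+1}$-valued trajectories converge pointwise to the probability mass function of the limiting trajectory, and Scheffé's lemma upgrades this to convergence in total variation, in particular in distribution.

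The proof has no genuinely hard step; the only real care needed is (i) the bookkeeping behind $(\star)$ --- lining up \eqref{eq:part} with \eqref{eq:inftrans} and checking that the number of parts cannot jump by more than one --- and (ii) being precise about what ``convergence in distribution'' means on the countable set $\mathrm{Par}$, namely pointwise convergence of mass functions to a mass function (Scheffé). Positive recurrence is used only to guarantee that $Z$ is finite and nonzero, so that $\pi_\infty$ is an honest probability distribution and the pointwise limits $\pi_\ell(\lambda)\to w(\lambda)/Z$ are nondegenerate; for the first assertion no such hypothesis is needed.
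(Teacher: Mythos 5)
Your proposal is correct and follows essentially the same route as the paper's proof: the same key observation that the kernels agree on partitions with fewer than $\ell$ parts and that the number of parts grows by at most one per step, giving an exact equality of laws for the deterministic start, and the same monotone convergence $Z^{(\ell)}\nearrow Z$ for the stationary start (your Scheff\'e step is just an explicit spelling-out of the total-variation convergence the paper invokes).
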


\begin{proof}
  The transition probabilities for the chain $\Lambda(\ell,\cdot)$ are
  given by \eqref{eq:part} when $\ell \in \mathbb{N}$ and by
  \eqref{eq:infdef} when $\ell=\infty$: observe that they are equal
  whenever $\lambda$ has strictly less than $\ell$ parts, and
  furthermore that any transition increases the number of parts by at
  most one. It follows that, when each chain is started at the
  deterministic state $\Lambda(\ell,0)=\nu$, then the law of
  $\left(\Lambda(\ell;0),\ldots,\Lambda(\ell;t)\right)$ does not
  depend on $\ell$ as soon as $\ell-t$ is larger than the number of
  parts of $\nu$, which immediately implies the first statement.

  To prove the second statement we note that the stationary
  distribution of $\Lambda(\ell,\cdot)$ converges in total variation
  to that of $\Lambda(\infty,\cdot)$ as $\ell \to \infty$ (so the
  statement holds for $t=0$). This is a simple consequence of
  Theorems~\ref{thm:heightstat} and \ref{thm:heightstatinf}: the
  unnormalized invariant measure of $\Lambda(\ell,\cdot)$ for $\ell
  \in \mathbb{N}$ is the restriction of that of
  $\Lambda(\infty,\cdot)$ to the set of integer partitions with at
  most $\ell$ parts and, when \eqref{eq:fincond} holds, their total
  masses are finite and tend to one another as $\ell \to
  \infty$. Since a partition has finitely many parts, the previous
  argument allows us to conclude that \eqref{eq:imjmcconv} holds for any
  $t$.
\end{proof}

\begin{rem}
  It is easier to state the convergence in terms of integer partitions
  since the state spaces of the chains $\Lambda(\ell,\cdot)$ are
  naturally included in one another. In terms of particles, this
  corresponds to embedding the state space $\mathrm{St}^{(\ell)}$ of
  the UMJMC with $\ell$ balls into that of the IMJMC by prepending
  infinitely many $\bullet$'s on the left, then shifting all letters in
  the resulting bi-infinite word by $\ell$ positions to the left so as
  to satisfy \eqref{eq:infbal}. This allows us to translate
  Proposition~\ref{prop:imjmcconv} in the particle language, and
  justifies the heuristic discussion at the beginning of this section.
\end{rem}

\section{Extensions with a fluctuating number of balls}
\label{sec:flucext}

We now consider extensions of the MJMC where the number of balls is
not fixed but is allowed to fluctuate. These extensions are the
natural multivariate generalizations of the so-called \emph{add-drop}
and \emph{annihilation} models introduced in \cite[Section
4]{Warrington} (to which we refer for motivations), and we thus keep
the same denomination here. Both models are defined on the same state
space and have the same transition graph, only the transitions
probabilities differ. In both cases, we will provide an exact
expression for the stationary distribution, whose validity will be
proved by considering the enriched version of the chain.

Since many definitions and notations will be common to both models, we
factorize their discussion here. The basic state space of the model
will be $\mathrm{St}_h = \{\circ,\bullet\}^h$, with $h$ a fixed
nonnegative integer. As hinted in Remark~\ref{rem:notarem}, it will be
convenient here to read a word from right to left.

\begin{Not}
  \label{not:Si}
  For $A \in \bigcup_{h \geq 0} \mathrm{St}_h$ and $i$ a nonnegative
  integer, we let $S_i(A)$ be the word obtained by replacing the
  $i$-th occurrence of $\circ$ in $A$ by $\bullet$, upon reading the
  word \emph{from the right} (if $i$ is equal to $0$ or larger than
  the number of occurrences of $\circ$ in $A$ then we set $S_i(A)=A$
  by convention).
\end{Not}

Note that $S_i(A)=T_{k-i}(A)$ for $A \in \mathrm{St}_{h,k}$ and $1
\leq i \leq k$, with $T$ as in Section~\ref{subsec:juggle-def}. We now
turn to the enriched model, whose state space is the set
$\mathcal{S}(H)$ of all set partitions of $\{1,\ldots,H\}$, with
$H=h+1$ (the number of enriched states is thus a Bell number). Recall
the notations from Section~\ref{sec:enricheddef}: $\psi$ is a
surjection from $\mathcal{S}(H)$ onto $\mathrm{St}_h$ and $\sigma
\mapsto \sigma^\downarrow$ is a mapping from $\mathcal{S}(H)$ to
$\mathcal{S}(h)$.

\begin{Not}
  \label{not:Ji}
  For $\tau \in \bigcup_{h \geq 0} \mathcal{S}(h)$ and $i$ a
  nonnegative integer, we define $J_i(\tau)$ as follows. If $\tau \in
  \mathcal{S}(h)$ then we let $J_i(\tau) \in \mathcal{S}(h+1)$ be the
  set partition obtained by inserting $h+1$ into the $i$-th block of
  $\tau$, now numbered by \emph{decreasing} order of maxima (if $i$ is
  equal to $0$ or larger than the number of blocks of $\tau$ then we
  set $J_i(\tau)=\tau \cup \{h+1\}$ by convention).
\end{Not}

Note that $J_i(\tau)=I_{K-i}(\tau)$ for $\tau \in \mathcal{S}(h,K)$
and $1 \leq i \leq K$. Observe that, for any $i\geq 0$, we have the
fundamental ``intertwining'' relation
\begin{equation}
  \label{eq:mapintertwin}
  \psi(J_i(\tau)) = S_i(\psi(\tau)\circ)
\end{equation}
which is nothing but a compact rewriting of \eqref{eq:psicond1} and
\eqref{eq:psicond2}. Note also that, when $\tau$ is non empty ($h \geq
1$), we have the commutation relation
\begin{equation}
  \label{eq:mapcommut}
  J_i(\tau)^\downarrow = J_i(\tau^\downarrow).
\end{equation}

The basic transition graph is defined as follows: for any
$A=a_1a_2\ldots a_h \in \mathrm{St}_h$ and $i \geq 0$, we have an
oriented edge from $A$ to $S_i(a_2\ldots a_h\circ)$ (we ignore edge
multiplicities). See Figure \ref{chain2} for $h=2$. Similarly, the enriched transition graph is obtained
by connecting each $\sigma \in \mathcal{S}(H)$ to
$J_i(\sigma^\downarrow)$ for all $i \geq 0$. It is not difficult to
check that both transition graphs are strongly connected using Proposition~\ref{prop:finirred} and Proposition~\ref{prop:enrichirred}.

\begin{figure}[htpb]
  \centering
  \includegraphics[width=.7\textwidth]{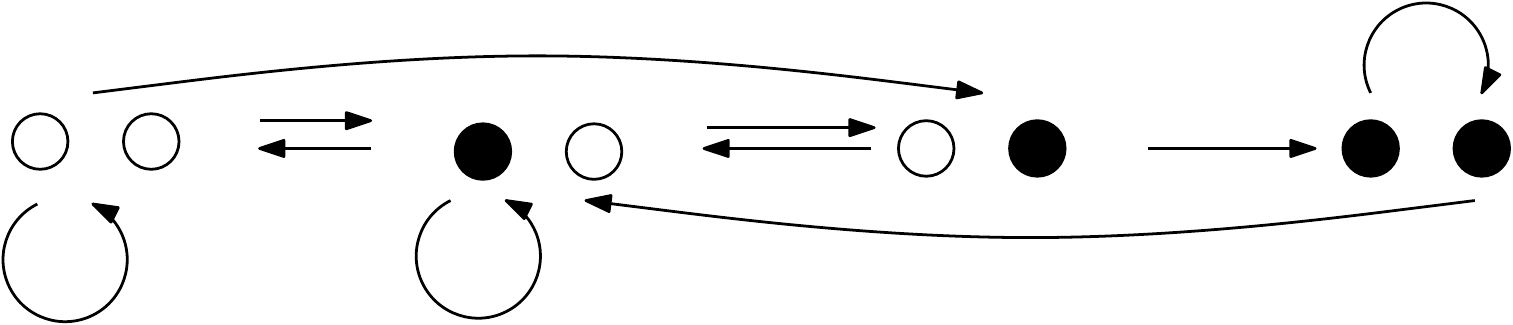}
  \caption{The basic transition graph for $h=2$}
  \label{chain2}
\end{figure}

\subsection{Add-drop juggling}
\label{subsec:juggle-adddrop}

\begin{Def} \label{Def:ADJMC} Given $h$ a nonnegative integer and $a
  =z_0,\ldots,z_h$ nonnegative real numbers, the (multivariate)
  \emph{add-drop model} is the Markov chain on the state space
  $\mathrm{St}_h$ for which the transition probability from
  $A=a_1\cdots a_h$ to $B$ reads
  \begin{equation}
    P_{A,B} = \left\{
    \begin{array}{cl}
      \displaystyle \frac{z_i}{z_0+\cdots+z_k} & \text{
        if $B=S_i(a_2\ldots a_h \circ)$ for some $i \in \{0,\ldots,k\}$,}\\
      0 & \text{otherwise,}
    \end{array} \right.
    \label{eq:adjmc}
  \end{equation}
  with $k$ the number of occurrences of $\circ$ in $a_2\ldots a_h
  \circ$.
\end{Def}

Warrington's add-drop model is recovered by taking
$a=z_1=\cdots=z_h=1$.  It is easily seen that, in general, the chain
is aperiodic with a unique communicating class whenever $a>0$, so that
the stationary distribution is unique.

\begin{thm} \label{thm:prob-juggle-adddrop} 
  The stationary distribution of the add-drop model is given by
  \begin{equation}
    \label{prob-juggle-adddrop}
    \Pi(B) = \frac{a^k}{Z_{h}} \prod_{\substack{i=1 \\ b_i = \bullet}}^h 
    \left( z_1 + \cdots + z_{\psi_i(B)+1} \right),
  \end{equation}
  for $B = b_1 \ldots b_h \in \mathrm{St}_{h,k}$, with $\psi_i(B) =
  \# \{j: i<j\leq h, \, b_j = \circ\}$ and
  \begin{equation}
    Z_h = \sum_{k=0}^h a^k
    h_{h-k}(z_1,z_1+z_2,\ldots,z_1+\cdots+z_{k+1}),
  \end{equation}
  where $h_\ell$ is the complete homogeneous symmetric polynomial of
  degree $\ell$. (Note that $z_{h+1}$ never appears in
  \eqref{prob-juggle-adddrop} since the product is empty for $k=h$.)
\end{thm}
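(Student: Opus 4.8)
The plan is to follow the combinatorial route advertised in the introduction: introduce an enriched Markov chain on the set $\mathcal{S}(H)$ of all set partitions of $\{1,\dots,H\}$, show that it admits a simple product stationary measure (a monomial per state, up to the overall factor coming from the denominator in \eqref{eq:adjmc}), and then push it down to the add-drop model via the surjection $\psi$. Concretely, for a state $A$ with $k$ occurrences of $\circ$ in $a_2\cdots a_h\circ$, the denominator $z_0+\cdots+z_k$ depends on $A$, so the enriched chain should be defined with transition $\sigma \mapsto J_i(\sigma^\downarrow)$ carrying a weight proportional to $z_i$, normalized by the sum $z_0+\cdots+z_k$ where $k$ is read off from $\sigma^\downarrow$. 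First I would verify the intertwining relation $\tilde P\Psi = \Psi P$, exactly as in Lemma~\ref{lem:intertwin}, using the map identity \eqref{eq:mapintertwin} $\psi(J_i(\tau)) = S_i(\psi(\tau)\circ)$; this is essentially bookkeeping and should go through verbatim. By the analogue of Corollary~\ref{cor:interstat}, it then suffices to find the enriched stationary measure and lump.

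Next I would guess and check the enriched stationary measure. Reading words from the right (as the section sets up) and using the $z$-notation $z_i$ = weight per arch covering $i$ blocks from Remark~\ref{rem:notarem}, the natural candidate is
\begin{equation}
  \tilde w(\sigma) = \frac{1}{Z_h}\prod_{(s,t)\text{ arch of }\sigma} \bigl( z_1 + \cdots + z_{C_\sigma(s,t)} \bigr),
\end{equation}
up to the state-dependent normalizations $z_0+\cdots+z_k$ that appear in the transition probabilities. The verification of stationarity is the technical heart: one classifies the predecessors of a given $\sigma\in\mathcal{S}(H)$. As in the proof of Lemma~\ref{lem:tildewdef}, writing $\sigma = J_i(\rho)$ for a unique pair $(\rho,i)$ with $\rho=\sigma^\downarrow$, the predecessors are all $\tau$ with $\tau^\downarrow = \rho$; these are obtained by inserting the element $1$ into one of the blocks of the up-shift of $\rho$, and summing the contributions should telescope against the $z_1+\cdots+z_i$ weight of the new arch just as in \eqref{eq:finstatverif}. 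Here the crucial new feature is that the number of blocks of $\rho$, hence the value of $k$, may differ among the $\tau$'s, so one must be careful that the state-dependent denominators $z_0+\cdots+z_k$ combine correctly; I expect the cleanest way is to carry the measure in the form "$\prod$(arch factors)" with the denominators absorbed, and check that the balance equation $\tilde w(\sigma) = \sum_\tau \tilde w(\tau)\tilde P_{\tau,\sigma}$ holds after multiplying through. This is the step I expect to be the main obstacle — reconciling the varying normalizing sums with the telescoping identity.

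Finally I would compute the lumped weights $w(B) = \sum_{\sigma\in\psi^{-1}(B)}\tilde w(\sigma)$ and match them to \eqref{prob-juggle-adddrop}. I would proceed by induction on the number $\ell = h-k$ of $\bullet$'s, exactly mirroring the proof of Lemma~\ref{lem:preimag}: locate the first $\bullet$ of $B$ reading from the right, split off the corresponding arch in each preimage, establish a bijection $\psi^{-1}(B)\leftrightarrow \psi^{-1}(B')\times\{1,\dots,\psi_i(B)+1\}$ with the weight factoring as $(z_1+\cdots+z_n)\tilde w(\tau')$, and sum. The factor $a^k$ in \eqref{prob-juggle-adddrop} should appear naturally because the empty-arch base case $B=\circ^h$ (all singletons) contributes $1$ while each step of the induction that consumes a $\circ$ in $B'$ relative to $B$ shifts the count of $\circ$'s by one; one tracks this to see the power of $a$ emerge. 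The formula for $Z_h$ then follows by summing \eqref{prob-juggle-adddrop} over $B\in\mathrm{St}_{h,k}$ for each $k$ and recognizing the resulting sum over partitions in a $k\times(h-k)$ box as $h_{h-k}(z_1,z_1+z_2,\dots,z_1+\cdots+z_{k+1})$, precisely as in Remark~\ref{rem:Zparform}. The add-drop case differs from the pure MJMC only in the extra variable $a$ and the right-to-left reading convention, so apart from the normalization subtlety flagged above, each step should be a controlled variation on the arguments already in Section~\ref{subsec:enriched}.
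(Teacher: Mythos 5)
Your overall architecture is the paper's: an enriched chain on all of $\mathcal{S}(H)$, the intertwining $\tilde P\Psi=\Psi P$ via \eqref{eq:mapintertwin}, and lumping through $\psi$. But the key ingredient — the candidate stationary measure of the enriched chain — is wrong, and this is not a detail you can patch in passing. The correct enriched measure is the \emph{monomial}
\begin{equation*}
  \tilde W(\sigma)=a^{K-1}\prod_{(s,t)\text{ arch of }\sigma} z_{C_\sigma(s,t)},
\end{equation*}
with $K$ the number of blocks of $\sigma$; the sums $z_1+\cdots+z_{\psi_i(B)+1}$ in \eqref{prob-juggle-adddrop} arise only \emph{after} lumping, from summing one monomial per preimage in $\psi^{-1}(B)$ (exactly the mechanism of Lemma~\ref{lem:preimag}, with $x_{E_i(B)}=z_{\psi_i(B)+1}$). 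Your candidate $\prod_{(s,t)}\bigl(z_1+\cdots+z_{C_\sigma(s,t)}\bigr)$, with no power of $a$, is not stationary: already for $H=3$ the stationary vector is $(z_1^2,\,az_1,\,az_2,\,az_1,\,a^2)$ (Example~\ref{exmp:setp-adddrop}), whereas your formula assigns $z_1+z_2$ to the state $2\,|\,1,3$ instead of $az_2$; since the stationary measure of an irreducible finite chain is unique up to scale, your candidate fails the balance equation. If you then lumped your product-of-sums weights, you would effectively introduce the sums twice and not recover \eqref{prob-juggle-adddrop}.

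Relatedly, the difficulty you flag as "the main obstacle" — reconciling the state-dependent denominators $z_0+\cdots+z_k$ with a telescoping identity — does not exist in the correct proof, and its apparent presence is a symptom of the wrong candidate. With the monomial $\tilde W$, a state $\tau$ has exactly $k+1$ predecessors (all $\sigma$ with $\sigma^\downarrow=\tau^\uparrow$, where $\tau^\uparrow$ is $\tau$ with $H$ removed — note your $\rho=\sigma^\downarrow$ is a slip; removing $1$ and shifting is not the inverse operation here), and their weights are $z_0\tilde W(\tau^\uparrow),\ldots,z_k\tilde W(\tau^\uparrow)$. Summing them produces the factor $z_0+\cdots+z_k$ that cancels the denominator in $\tilde P_{\sigma,\tau}=z_j/(z_0+\cdots+z_k)$ exactly, with no telescoping at all; the $a^{K-1}$ prefactor is what makes the block-count bookkeeping close (it is constant on each fibre $\psi^{-1}(B)$, equal to $a^k$, which is where the $a^k$ in \eqref{prob-juggle-adddrop} comes from — not from the induction on $\circ$'s you sketch). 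Your final steps (citing Lemma~\ref{lem:preimag} for the lumped weights and Proposition~\ref{prop:Zhform} for $Z_h$) are the right moves, but they only work once the enriched measure is the monomial above.
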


\begin{exmp} \label{exmp:juggle-adddrop}
The transition matrix for $h=2$ in the lexicographically-ordered basis 
$(\;\bullet\bullet , \; \circ\bullet, \; \bullet\circ, \; \circ\circ)$
is given by
\begin{equation}
P =   \left( 
 \begin {array}{cccc} \noalign{\medskip}\ds {\frac {z_{{1}}}{a+
z_{{1}}}}&0&\ds {\frac {a}{a+z_{{1}}}}&0\\
\noalign{\medskip}\ds {\frac {z_{{1}}}{a+
z_{{1}}}}&0&\ds {\frac {a}{a+z_{{1}}}}&0\\
0 &\ds {\frac {z_{{1}}}{a+z_{{1}}+z_{{2}}}}&\ds {\frac {z_{{2}}}{a+z_{{1}}+z_{{2}}}}&\ds {\frac {a}{a+z_{{1}}+z_{{2}}}}\\
 0 &\ds {\frac {z_{{1}}}{a+z_{{1}}+z_{{2}}}}&\ds {\frac {z_{{2}}}{a+z_{{1}}+z_{{2}}}}&\ds {\frac {a}{a+z_{{1}}+z_{{2}}}}\\
\end {array} 
\right) ,
\end{equation}
and the stationary probabilities are given by the normalized coordinates of the row eigenvector with eigenvalue 1, namely  
\begin{equation}
\frac 1{Z_2}
\left( {z_1^{2}}, \;
{az_{{1}}}, \;
{a(z_{{1}}+z_{{2}})}, \;
{a^{2}} 
\right),
\end{equation}
where $Z_2 = {a}^{2}+az_{{2}}+2\,az_{{1}}+z_{1}^{2}$.
\end{exmp}

Theorem~\ref{thm:prob-juggle-adddrop} is the natural multivariate
generalization of \cite[Theorem 3, item 1]{Warrington}. The
parameter $a$ has the physical interpretation of the fugacity for a
$\circ$. In other words the distribution $\Pi$ is the grand-canonical
version of the stationary distribution $\pi$ of
Theorem~\ref{thm:prob-juggle}.
We prove Theorem~\ref{thm:prob-juggle-adddrop} by a straightforward
extension of the construction of Section~\ref{subsec:enriched}, which
we now detail.

\begin{Def} \label{Def:EADMC} The (multivariate) \emph{enriched
    add-drop model} is the Markov chain on the state space
  $\mathcal{S}(H)$ for which the transition probability from $\sigma$
  to $\tau$ is given by
  \begin{equation}
    \tilde{P}_{\sigma,\tau} = \left\{
    \begin{array}{cl}
      \displaystyle \frac{z_i}{z_0+\cdots+z_k} & \text{if $\tau=J_i(\sigma^\downarrow)$ for some $i \in \{0,\ldots,k\}$,}\\
      0 & \text{otherwise,}
    \end{array} \right.
   \label{eq:enriched-add-drop-matrix}
  \end{equation}
  with $k$ the number of blocks of $\sigma^\downarrow$.
\end{Def}

By \eqref{eq:mapintertwin} it is immediate that the add-drop model is
indeed the projection of the enriched chain, and furthermore it is
easily seen that the enriched chain is aperiodic with a unique
communicating class for
$z_0=a>0$.

\begin{lem} \label{lem:prob-setp-adddrop}
  For $\sigma \in \mathcal{S}(H)$ with, say, $K$ blocks, the monomial
  \begin{equation}
    \tilde{W}(\sigma) = a^{K-1} \prod_{(s,t) \text{ arch of } \sigma} z_{C_{\sigma}(s,t)},\label{eq:adddropstat}
  \end{equation}
  where $C_{\sigma}(s,t)$ is as in Notation~\ref{ZCsigma}, defines
  an unnormalized stationary distribution of the enriched chain.
\end{lem}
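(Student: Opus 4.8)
The plan is to verify directly that $\tilde W$ satisfies the global balance equation
\begin{equation}
  \tilde W(\sigma) = \sum_{\tau \in \mathcal{S}(H)} \tilde W(\tau)\,\tilde P_{\tau,\sigma},
\end{equation}
mimicking the proof of Lemma~\ref{lem:tildewdef} but now keeping track of the extra fugacity factor $a^{K-1}$, since the number of blocks is no longer fixed. First I would enumerate the predecessors of a given $\sigma \in \mathcal{S}(H)$. Write $\rho = \sigma^{\downarrow\downarrow}$... more precisely, set $\rho = (\sigma^{\downarrow})$ restricted appropriately: by \eqref{eq:mapintertwin} and the injectivity observations in Section~\ref{sec:enricheddef}, there is a unique $i \geq 0$ and a unique $\rho \in \mathcal{S}(h)$ with $\sigma = J_i(\rho)$, namely $\rho$ is obtained from $\sigma$ by deleting $H$ from its block (and deleting the block if it becomes empty). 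The predecessors $\tau$ of $\sigma$ are exactly those $\tau \in \mathcal{S}(H)$ with $\tau^\downarrow = \rho$; each such $\tau$ is obtained by shifting all blocks of $\rho$ up by one and then inserting $1$ into one of the available ``slots'' (either into one of the $r$ existing blocks, where $r$ is the number of blocks of $\rho$, or as a new singleton block). So there are $r+1$ predecessors, indexed by the slot $j \in \{0,1,\ldots,r\}$ into which $1$ is placed, and for each of them $\tilde P_{\tau,\sigma}$ equals $z_i/(z_0+\cdots+z_k)$ where $k$ is the number of blocks of $\tau^\downarrow = \rho$, i.e. $k = r$; crucially this is the \emph{same} for all $r+1$ predecessors.

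Next I would compare weights. Inserting $1$ into slot $j$ of the up-shifted $\rho$ creates one new arch covering a certain number of blocks; as in Lemma~\ref{lem:tildewdef}, as $j$ ranges over $\{0,\ldots,r\}$ this new arch covers each of $1,2,\ldots,r+1$ blocks exactly once, so the new arch contributes a factor $z_c$ for each $c \in \{1,\ldots,r+1\}$ across the $r+1$ predecessors, while all other arch weights are unchanged and equal the arch weights of $\rho$. Also each predecessor $\tau$ has one more block than $\rho$, so its block count is $r+1$ and its fugacity factor is $a^{r}$; meanwhile $\rho$ has fugacity factor $a^{r-1}$ in the natural sense, but it is cleaner to just write $\tilde W(\tau) = a^{r} \cdot z_{c} \cdot \big(\prod_{\text{arches of }\rho} z_{C_\rho(\cdot)}\big)$ directly. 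Summing over the $r+1$ predecessors:
\begin{equation}
  \sum_{\tau} \tilde W(\tau)\,\tilde P_{\tau,\sigma}
  = \frac{z_i}{z_0+\cdots+z_r}\Big(\prod_{\text{arches of }\rho} z_{C_\rho(\cdot)}\Big) a^{r} \sum_{c=1}^{r+1} z_c.
\end{equation}
Now I must identify $z_0+\cdots+z_r = z_0 + \sum_{c=1}^{r+1} z_{c} - z_{r+1} + \ldots$ — wait, rather: the denominator in the transition out of a predecessor $\tau$ is $z_0 + z_1 + \cdots + z_{k}$ with $k$ the number of $\circ$'s in $\psi(\tau)^\downarrow\circ$, equivalently the number of blocks of $\rho$, so it is $z_0+z_1+\cdots+z_r$; and the new arch in $\tau$ covers between $1$ and $r+1$ blocks, giving $\sum_{c=1}^{r+1} z_c = (z_0+\cdots+z_{r+1}) - z_0 = (z_0+\cdots+z_r)+z_{r+1}-z_0$. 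Hmm — this does not obviously telescope to cancel the denominator. The resolution, which is the point I expect to be the main obstacle, is that one must be careful about the \emph{normalization convention}: in Definition~\ref{Def:ADJMC} the index $i$ runs in $\{0,\ldots,k\}$ where $k$ counts the $\circ$'s after the shift, so the relevant sum is $z_0+\cdots+z_k$ and one checks that the number of blocks covered by the newly created arch, as the insertion slot varies, runs over precisely $\{1,\ldots,k+1\}$ — but block $0$ (a brand-new singleton) corresponds to index... Actually $z_0 = a$ is the fugacity, and the correct bookkeeping is: a predecessor with $r$ blocks in $\rho$ produces, across its $r+1$ insertion choices, arch-cover-counts $1,\ldots,r$ together with the ``no arch / new singleton that later becomes the block of $H$'' case which contributes the factor $a$ rather than a $z_c$. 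So the sum over predecessors is $\frac{z_i}{z_0+\cdots+z_r}\big(\prod_{\text{arches of }\rho}z_{C_\rho}\big) a^{r-1}\big(a + \sum_{c=1}^{r} z_c\big) = \frac{z_i}{z_0+\cdots+z_r}\big(\prod z_{C_\rho}\big)a^{r-1}(z_0+\cdots+z_r)$, and the denominator cancels, leaving $z_i\, a^{r-1}\prod_{\text{arches of }\rho} z_{C_\rho(\cdot)} = \tilde W(\sigma)$, since $\sigma = J_i(\rho)$ has $r$ or $r+1$ blocks according to whether $H$ joins an existing block or forms a singleton, and in either case the arch structure and block count work out to give exactly $a^{K-1}\prod_{\text{arches of }\sigma} z_{C_\sigma}$. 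I would handle the case $\{H\}\in\sigma$ (i.e. $i=0$ or $i$ exceeds the block count, so $H$ is a new singleton) separately at the start, exactly as in Lemma~\ref{lem:tildewdef}: then $\sigma$ has the unique predecessor $\tau$ with $\sigma = \tau^\downarrow\cup\{H\}$, the arches and block counts match, and $\tilde W(\sigma)=\tilde W(\tau)\cdot 1$ trivially since that transition has probability $1$ only when... in fact here the transition probability is $z_0/(z_0+\cdots+z_k) = a/(a+\cdots)$, so this case also needs the summation argument and is not actually degenerate — I would fold it into the general count.

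The main obstacle, then, is purely bookkeeping: correctly matching (i) the index $i$ in the transition rule, (ii) the number of blocks covered by the arch created at the receiving end, and (iii) the role of $z_0=a$ both as the fugacity in $a^{K-1}$ and as the weight of the ``insert as a new singleton'' move, so that the sum $z_0+\cdots+z_k$ in the denominator of the transition probability cancels exactly against the sum of weights of the newly created arches (including the fugacity term) over all predecessors. Once the indexing is pinned down — using $C_\sigma(s,t)\in\{1,\ldots,K\}$ and the fact that $J_i$ inserts into the $i$-th block by \emph{decreasing} maxima — the computation telescopes in one line and the lemma follows. I would also remark that this recovers Lemma~\ref{lem:tildewdef} upon conditioning on the block number $K$ being fixed, which provides a useful sanity check.
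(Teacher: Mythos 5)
Your proposal is correct and follows essentially the same route as the paper's proof: enumerate the $r+1$ predecessors of $\sigma$ (all sharing the same transition probability $z_i/(z_0+\cdots+z_r)$ into $\sigma$), observe that their weights are $z_c\,a^{r-1}\prod_{\text{arches of }\rho}z_{C_\rho}$ for $c=0,\ldots,r$ with $z_0=a$ playing the dual role of fugacity and singleton-insertion weight, and cancel the denominator. Your final bookkeeping (after the self-correction about the singleton case contributing $a$ rather than a $z_{r+1}$, and about folding the $\{H\}\in\sigma$ case into the general count) matches the paper's argument exactly.
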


\begin{proof}
  We want to show that $\tilde{W}(\tau) = \sum_{\sigma \in
    \mathcal{S}(H)} \tilde{W}(\sigma)\tilde{P}_{\sigma,\tau}$ for any
  $\tau$. Let $\tau^{\uparrow}$ be the set partition obtained by
  removing the element $H$ from $\tau$, so that the possible predecessors of
  $\tau$ are the $\sigma$ such that $\sigma^{\downarrow} =
  \tau^{\uparrow}$. More precisely, any predecessor is obtained by
  shifting all elements of $\tau^\uparrow$ up by $1$, then inserting
  the element $1$ either as a singleton or into a preexisting block.
  Hence, there are $k+1$ predecessors where $k$ is the number of
  blocks of $\tau^\uparrow$, and we readily see that their weights are
  $z_i \tilde{W}(\tau^\uparrow)$ with $i=0,\ldots,k$. On the other
  hand, let $j$ be equal to $0$ if $\{H\}$ is a singleton of $\tau$,
  or equal to $C_\tau(r,H)$ where $r$ is such that $(r,H)$ is an arch
  of $\tau$. We have $\tilde{W}(\tau)=z_j \tilde{W}(\tau^\uparrow)$
  and $\tilde{P}_{\sigma,\tau}=z_j/(z_0+\cdots+z_k)$ for any predecessor
  $\sigma$. We deduce that, as wanted,
  \begin{equation}
    \sum_{\sigma \in
      \mathcal{S}(H)} \tilde{W}(\sigma)\tilde{P}_{\sigma,\tau} =
    \sum_{i=0}^k z_i \tilde{W}(\tau^\uparrow) \frac{z_j}{z_0+\cdots+z_k}
    = \tilde{W}(\tau).
  \end{equation}
\end{proof}

\begin{exmp} \label{exmp:setp-adddrop}
The transition matrix for $H=3$ in the ordered basis\\
$(1,2,3; \;\; 1\,|\,2,3; \;\; 2\,|\,1,3; \;\; 1,2\,|\,3; \;\; 1\,|\, 2\,|\, 3)$
is given by
\begin{equation}
\tilde{P} =   \left( 
 \begin {array}{ccccc} 
\ds {\frac {z_{{1}}}{a+z_{{1}}}}&0&0&\ds {\frac {a}{a+z_{{1}}}}&0\\
\noalign{\medskip}\ds {\frac {z_{{1}}}{a+z_{{1}}}}&0&0&\ds {\frac {a}{a+z_{{1}}}}&0\\
\noalign{\medskip}0&\ds {\frac {z_{{1}}}{a+z_{{1}}+z_{{2}}}}&\ds {\frac {z_{{2}}}{a+z_{{1}}+z_{{2}}}}&0&\ds {\frac {a}{a+z_{{1}}+z_{{2}}}}\\
\noalign{\medskip}0&\ds {\frac {z_{{1}}}{a+z_{{1}}+z_{{2}}}}&\ds {\frac {z_{{2}}}{a+z_{{1}}+z_{{2}}}}&0&\ds {\frac {a}{a+z_{{1}}+z_{{2}}}}\\
\noalign{\medskip}0&\ds {\frac {z_{{1}}}{a+z_{{1}}+z_{{2}}}}&\ds {\frac {z_{{2}}}{a+z_{{1}}+z_{{2}}}}&0&\ds {\frac {a}{a+z_{{1}}+z_{{2}}}}
\end {array} 
\right) ,
\end{equation}
and admits the row vector $\left( {z_{1}^{2}}, \; {az_{{1}}}, \;
  {az_{{2}}}, \; {az_{{1}}}, \; {{a}^{2}} \right)$ as a left
eigenvector of eigenvalue $1$. Compare this with the stationary distribution in Example~\ref{exmp:juggle-adddrop}.
\end{exmp}

\begin{proofof}{Theorem~\ref{thm:prob-juggle-adddrop}}
  For $B \in \mathrm{St}_{h,k}$, let
  \begin{equation}
    W(B)=\sum_{\sigma \in \psi^{-1}(B)} \tilde{W}(\sigma).
  \end{equation}
  By Lemma~\ref{lem:preimag} and Remark~\ref{rem:notarem}
  (note that $x_{E_i(B)}=z_{\psi_i(B)+1}$), we readily deduce
  \begin{equation}
    W(B)=a^k \prod_{\substack{i=1 \\ b_i =
        \bullet}}^h \left( z_1 + \cdots + z_{\psi_i(B)+1} \right).
  \end{equation}
  The total mass of $W$ is clearly equal to $\sum_{k=0}^h a^k
  Z_{h,k}(z_{k+1},z_k,\ldots,z_1)$ with $Z_{h,k}$ the normalization
  factor of the MJMC defined in Proposition \ref{prop:Zhform}.
\end{proofof}

\subsection{Annihilation juggling}
\label{subsec:juggle-annihilation}

As the reader is by now familiar with our approach, we define the
basic model and its enriched version at the same time.

\begin{Def} \label{Def:ANJMC} Given $h$ a nonnegative integer and
  $z_1,\ldots,z_{h+1}=a$ nonnegative real numbers such that
  $z_1+\cdots+z_h+a=1$, the (multivariate) \emph{annihilation model}
  is the Markov chain on the state space $\mathrm{St}_{h}$ for which
  the transition probability from $A=a_1a_2\ldots a_h$ to $B$ reads
  \begin{equation}
    P_{A,B} = \left\{
    \begin{array}{cl}
      z_i & \text{
        if $B=S_i(a_2\ldots a_h \circ)$ for some $i \in \{1,\ldots,k\}$,}\\
      z_{k+1} +\cdots + z_{h} + a & \text{if $B=a_2\ldots a_h \circ$,}\\
      0 & \text{otherwise,}
    \end{array} \right.
    \label{eq:anjmc}
  \end{equation}
  with $k$ the number of occurrences of $\circ$ in $a_2\ldots a_h
  \circ$. Similarly, the (multivariate) \emph{enriched
    annihilation model} is the Markov chain on the state space
  $\mathcal{S}(H)$ for which the transition probability from $\sigma$
  to $\tau$ is given by
  \begin{equation}
    \tilde{P}_{\sigma,\tau} = \left\{
      \begin{array}{cl}
        z_i & \text{
          if $\tau=J_i(\sigma^{\downarrow})$ for some $i \in \{1,\ldots,k\}$,}\\
        z_{k+1} +\cdots + z_{h} + a & \text{if $\tau=\sigma^\downarrow \cup \{H\}$,}\\
        0 & \text{otherwise,}
      \end{array} \right.
    \label{eq:anjmcplus}
  \end{equation}
  with $k$ the number of blocks of $\sigma^\downarrow$.
\end{Def}

\begin{rem}
  By our convention for $S_i(A)$ (resp.\ $J_i(\tau)$) when
  $i$ is larger than the number of occurrences of $\circ$ in $A$
  (resp.\ the number of blocks of $\tau$), we have the more compact
  expression
  $P_{A,B} = \sum z_i$ (resp.\ $\tilde{P}_{\sigma,\tau} = \sum z_i$)
  where the sum runs over all $i \in \{1,\ldots,h+1\}$ such that
  $B=S_i(a_2\ldots a_h \circ)$ (resp.\ $\tau=J_i(\sigma^\downarrow)$).
\end{rem}

\begin{rem}
  Warrington's annihilation model is recovered by taking
  $z_1=\cdots=z_h=a=1/(h+1)$. We still call our multivariate
  generalization the annihilation model, but this requires some
  clarification. Indeed, in Warrington's uniform case, one can
  interpret the dynamics by saying that ball insertions are made at
  arbitrary (empty or occupied) sites, and that a ball inserted at
  an occupied site is annihilated. However, in our multivariate
  generalization the correct interpretation is to say that we pick an
  $i$ between $1$ and $h+1$ and insert the ball at the $i$-th
  available site from the right, and that the ball is annihilated if
  there is no such site on the lattice.
\end{rem}

Here is the multivariate generalization of \cite[Theorem 3, item
2]{Warrington}:

\begin{thm} \label{thm:prob-juggle-annihilation} The stationary
  distribution of the annihilation model is given by
  \begin{equation}
    \label{eq:prob-juggle-annihilation} \Pi(B) = \prod_{\substack{i=1 \\
        b_i = \bullet}}^h 
    \left( z_1 + \cdots + z_{\psi_i(B)+1} \right)
    \prod_{j=1}^k (z_{j+1} + \cdots + z_h + a),
  \end{equation}
  for $B = b_1 \ldots b_h \in St_{h,k}$, with $\psi_i(B) = \# \{j:
  i<j\leq h, \, b_j = \circ\}$ as before. Similarly, the stationary
  distribution of the enriched annihilation model is given by
  \begin{equation}
    \label{eq:prob-juggle-annihilation-enriched}
    \tilde{\Pi}(\sigma) =  \prod_{(s,t) \text{ arch of } \sigma} \!\! z_{C_\sigma(s,t)} 
    \prod_{i=1}^{K-1} (z_{i+1} + \cdots + z_{H-1} + a),
  \end{equation}
  with $\sigma \in \mathcal{S}(H)$ and $K$ its number of blocks. There
  is no normalization factor, as $\Pi$ and $\tilde{\Pi}$ are already
  normalized for $z_1+\cdots+z_h+a=1$.
\end{thm}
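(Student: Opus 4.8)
The plan is to follow exactly the template of Section~\ref{subsec:juggle-adddrop}: first prove that $\tilde{\Pi}$ is a stationary measure of the enriched annihilation chain, then deduce $\Pi=\tilde{\Pi}\Psi$ by projection along $\psi$, and finally check that the total mass of $\tilde{\Pi}$ (equivalently of $\Pi$) is already $1$. For the enriched chain I would verify the balance equation $\tilde{\Pi}(\tau)=\sum_{\sigma}\tilde{\Pi}(\sigma)\tilde{P}_{\sigma,\tau}$ by enumerating predecessors, as in the proof of Lemma~\ref{lem:prob-setp-adddrop}. Writing $\rho$ for the set partition obtained by deleting $H$ from $\tau$ and letting $k$ be its number of blocks, the predecessors of $\tau$ are precisely the $k+1$ set partitions $\sigma$ with $\sigma^\downarrow=\rho$ (shift $\rho$ up by one, then insert $1$ either as a singleton or into one of the $k$ blocks), and $\tilde{P}_{\sigma,\tau}$ is the same for all of them: it equals $z_{C_\tau(r,H)}$ if $H$ lies in a block of $\tau$ with previous maximum $r$, and $z_{k+1}+\cdots+z_H$ (with $z_H=a$) if $\{H\}$ is a singleton of $\tau$. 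On the other side, inserting $1$ into the $l$-th block of $\rho$ ordered by increasing minima creates a single new arch covering exactly $l$ blocks, hence carrying weight $z_l$, while inserting $1$ as a singleton creates no arch but increases the block count by one, i.e.\ multiplies the second factor of \eqref{eq:prob-juggle-annihilation-enriched} by $z_{k+1}+\cdots+z_H$. All previously existing arch weights are unchanged (adding the global minimum $1$, resp.\ the global maximum $H$, to a block affects no interval $\{s,\ldots,t\}$ with $s\geq 2$, resp.\ $t\leq H-1$). Factoring out the common quantities then reduces the balance equation to the single identity $z_1+\cdots+z_k+(z_{k+1}+\cdots+z_H)=1$, with the singleton case and the non-singleton case matching term by term.

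Once $\tilde{\Pi}$ is known to be invariant, the projection step is immediate: by the intertwining relation \eqref{eq:mapintertwin} the annihilation model is the projection of its enriched version along $\psi$, verbatim as in Lemma~\ref{lem:intertwin}, so $\Pi=\tilde{\Pi}\Psi$, that is $\Pi(B)=\sum_{\sigma\in\psi^{-1}(B)}\tilde{\Pi}(\sigma)$. The number of blocks $K=k+1$ is constant on $\psi^{-1}(B)$, so the factor $\prod_{i=1}^{K-1}(z_{i+1}+\cdots+z_{H-1}+a)$ pulls out of the sum, and what remains, $\sum_{\sigma\in\psi^{-1}(B)}\prod_{(s,t)}z_{C_\sigma(s,t)}$, is exactly the quantity evaluated in Lemma~\ref{lem:preimag} under the substitution $z_i=x_{K-i}$ of Remark~\ref{rem:notarem}; this yields $\prod_{b_i=\bullet}(z_1+\cdots+z_{\psi_i(B)+1})$ and hence \eqref{eq:prob-juggle-annihilation}.

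It then remains to show $\sum_{\sigma\in\mathcal{S}(H)}\tilde{\Pi}(\sigma)=1$, equivalently $\sum_{B}\Pi(B)=1$. Summing the expression for $\Pi(B)$ over $B\in\mathrm{St}_{h,k}$ and applying Proposition~\ref{prop:Zhform} (note $z_{i+1}+\cdots+z_h+a=1-u_i$ with $u_i=z_1+\cdots+z_i$), the total mass becomes $\sum_{K=1}^{H}\bigl(\prod_{i=1}^{K-1}(1-u_i)\bigr)\,h_{H-K}(u_1,\ldots,u_K)$. This is a polynomial identity in $u_1,\ldots,u_{H-1}$ asserting that this sum equals $1$; I would prove it by grouping the terms from $K=H$ downwards and telescoping, using the recursion $h_m(w_1,\ldots,w_j,u)=h_m(w_1,\ldots,w_j)+u\,h_{m-1}(w_1,\ldots,w_j,u)$ (which gives $\sum_{m\le p}h_m(w_1,\ldots,w_j)=h_p(w_1,\ldots,w_j,u)+(1-u)\sum_{m\le p-1}h_m(w_1,\ldots,w_j,u)$, exactly the combination that appears). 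The argument for $\tilde{\Pi}$ on the full state space $\mathcal{S}(H)$ proceeds identically after splitting by the number of blocks.

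The main obstacle is the combinatorial bookkeeping in the first step: one must pin down precisely how many blocks the newly created arch covers when inserting $1$ into $\rho$, and how the arch attached to $H$ is counted in $\tau$, which means tracking carefully the two ordering conventions (blocks by maxima for $J_i$, by minima for the predecessor count). The compact form of the transition probabilities recorded in the remark following Definition~\ref{Def:ANJMC} is what makes the predecessor count transparent and lets the balance equation collapse cleanly. The symmetric-function identity in the last step is elementary but is easy to mishandle if the index ranges of the $h_m$'s and of the products are not tracked precisely.
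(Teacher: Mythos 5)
Your proposal is correct, but it takes a genuinely different route from the one the paper follows. The paper does \emph{not} verify the balance equation for $\tilde{\Pi}$ directly (it only mentions in passing that this is possible); instead it introduces the doubly enriched chain on words in $\{1,\ldots,h+1\}^h$, whose stationary law is trivially the product measure $z_{w_1}\cdots z_{w_h}$, proves via \eqref{eq:mapcommut} that $\tilde\phi$ and $\phi=\psi\circ\tilde\phi$ are projections (Theorem~\ref{thm:antrivproj}), and obtains \eqref{eq:prob-juggle-annihilation-enriched} by summing $\hat{\Pi}$ over $\tilde\phi^{-1}(\sigma)$ through an induction on the word length, the two factors of $\tilde{\Pi}$ arising from the two cases (append a letter $w\leq K$ versus $w\geq K$). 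That construction yields normalization for free, since the total mass of $\hat{\Pi}$ is $(z_1+\cdots+z_{h+1})^h=1$, and it is also what powers Theorem~\ref{thm:spectrum-annihilation} on convergence in bounded time. Your route trades this for a direct predecessor-by-predecessor check of stationarity (which you carry out correctly: the $k+1$ predecessors of $\tau$ all have the same transition probability into $\tau$, their weights are $z_1\tilde{W},\ldots,z_k\tilde{W}$ and $(z_{k+1}+\cdots+z_h+a)\tilde{W}$, and the sum collapses by the normalization of the $z_i$'s), plus the projection via Lemma~\ref{lem:preimag} exactly as the paper also does in its final step, plus a separate symmetric-function identity $\sum_{K=1}^{H}\prod_{i=1}^{K-1}(1-u_i)\,h_{H-K}(u_1,\ldots,u_K)=1$ for the total mass. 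That identity is true and follows by induction on $H$ from $h_{m}(u_1,\ldots,u_K)=h_{m}(u_1,\ldots,u_{K-1})+u_K h_{m-1}(u_1,\ldots,u_K)$ (the $(1-u_K)$ and $u_K$ contributions recombine to give the same sum one degree lower), so your sketch of that step, while the least detailed part of the proposal, does go through. The only thing your approach cannot recover is the ``memoryless'' bounded-time convergence property, which genuinely needs the doubly enriched chain.
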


\begin{exmp} \label{exmp:juggle-annihilation} The transition matrix of
  the basic annihilation model for $h=2$ in the
  lexicographically-ordered basis 
$(\bullet\bullet, \; \circ\bullet, \; \bullet\circ, \; \circ\circ)$  
 is given by
  \begin{equation}
    P = \left(
      \begin{array}{cccc}  \noalign{\medskip}z_1&0&z_{{2}}+a&0\\
        \noalign{\medskip}z_1&0&z_{{2}}+a&0\\
        \noalign{\medskip}0&z_{{1}}&z_{{2}}&a\\
       0&z_{{1}}&z_{{2}}&a
      \end{array} 
    \right) ,
  \end{equation}
  and admits the row vector $\left( {z_{{1}}^{2}} , \;
    z_{{1}} \left( z_{{2}}+a \right), \; \left( z_{{1}}+z_{{2}}
    \right) \left( z_{{2}}+a \right), a \left( z_{{2}}+a \right)\; \right)$ as a
  left eigenvector of eigenvalue $1$. The sum of the coordinates of
  this row vector is $(z_1+z_2+a)^2 = 1$. Similarly, the transition
  matrix of the enriched annihilation model for $H=3$ in the same
  ordered basis as in Example~\ref{exmp:setp-adddrop} is given by
  \begin{equation}
    \tilde{P} = \left(
      \begin {array}{ccccc} z_{{1}}&0&0&z_{{2}}+a&0\\
        \noalign{\medskip}z_{{1}}&0&0&z_{{2}}+a&0\\
        \noalign{\medskip}0&z_{{1}}&z_{{2}}&0&a\\
        \noalign{\medskip}0&z_{{1}}&z_{{2}}&0&a\\
        \noalign{\medskip}0&z_{{1}}&z_{{2}}&0&a
      \end {array} \right) ,
  \end{equation}
  and admits the row vector $\left( z_{1}^{2}, \;\; z_{{1}} \left(
      z_{{2}}+a \right), \;\; z_{{2}} \left( z_{{2}}+a \right), \;\;
    z_{{1}} \left( z_{{2}}+a \right), \;\; a\left( z_{{2}}+a \right)
  \right)$ as a left eigenvector of eigenvalue $1$.
\end{exmp}

Proving Theorem~\ref{thm:prob-juggle-annihilation} can be done by
checking ``by hand'' the stationarity of $\tilde{\Pi}$, then deducing
that of $\Pi$ using the projection property
\eqref{eq:mapintertwin}. However, the fact that $\tilde{\Pi}(\sigma)$
is not a monomial suggests that there exists a ``doubly enriched''
chain that yields both $P$ and $\tilde{P}$ by projection. It can be
seen that, if we relax the condition $z_1+\cdots+z_h+a=1$, then the
total mass of $\Pi$ (or of $\tilde{\Pi}$) is equal to
$(z_1+\cdots+z_h+a)^h$: this suggests that doubly enriched states
should consist of $h$-tuples of elements in a set of cardinality
$h+1$. From now on we will write $z_{h+1}$ in lieu of $a$.

\begin{Def}
  Given $h$ a nonnegative integer and $z_1,\ldots,z_{h+1}$ nonnegative
  real numbers such that $z_1+\cdots+z_{h+1}=1$, the \emph{doubly
    enriched annihilation model} is the Markov chain on the state
  space $\{1,\ldots,h+1\}^h$ for which the transition probability from
  $W=w_1 w_2\ldots w_h$ to $W'$ reads
  \begin{equation}
    \hat{P}_{W,W'} = \left\{
      \begin{array}{cl}
        z_i & \text{if $W'=w_2\cdots w_h i$,} \\
        0 & \text{otherwise.}
      \end{array} \right.
    \label{eq:anjmcplusplus}
  \end{equation}
\end{Def}

\begin{rem}
  The doubly enriched annihilation model can be seen a specialization
  of the de Bruijn process considered in \cite{AySt2011}.
\end{rem}

It is obvious that the stationary probability of $W=w_1\cdots w_h$
in the doubly enriched annihilation model is
\begin{equation}
  \label{eq:antristat}
  \hat{\Pi}(W) = z_{w_1} \cdots z_{w_h}
\end{equation}
since we are basically moving a window of size $h$ within a sequence
of independent identically distributed random variables. The
nontrivial fact is that this chain may be projected to the
annihilation and the enriched annihilation models.

\begin{thm}
  \label{thm:antrivproj}
  Let $\phi$ and $\tilde{\phi}$ be the mappings from $\bigcup_{j\geq0}
  \{1,\ldots,h+1\}^j$ to respectively $\bigcup_{j\geq0} \mathrm{St}_j$
  and $\bigcup_{j\geq0} \mathcal{S}(j+1)$ defined inductively by
  \begin{equation}
    \label{eq:anphidef}
    \phi(w_1 \cdots w_j)=
    \begin{cases}
      \emptyset & \text{for $j=0$,} \\
      S_{w_j}(\phi(w_1\cdots w_{j-1})\circ) & \text{for $j \geq 1$,}
    \end{cases}
  \end{equation}
  where $\emptyset$ denotes the empty word, and
  \begin{equation}
    \label{eq:anphitildef}
    \tilde{\phi}(w_1 \cdots w_j)=
    \begin{cases}
      \{\{1\}\} & \text{for $j=0$,} \\
      J_{w_j}(\tilde{\phi}(w_1\cdots w_{j-1})) & \text{for $j \geq 1$.}
    \end{cases}
  \end{equation}
  Then, the restrictions of $\phi$ and $\tilde{\phi}$ to
  $\{1,\ldots,h+1\}^h$ yield projections of the doubly enriched chain onto
  respectively the annihilation model and the enriched annihilation
  model.
\end{thm}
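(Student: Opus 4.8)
\textbf{Proof plan for Theorem~\ref{thm:antrivproj}.}
The plan is to verify directly that $\phi$ and $\tilde\phi$ intertwine the doubly enriched chain with the two target chains, i.e.\ that the pushforward of $\hat P$ under $\phi$ (resp.\ $\tilde\phi$) equals $P$ (resp.\ $\tilde P$) in the sense of \cite[Lemma 2.5]{LevinPeresWilmer}. The conceptual heart of the argument is to understand how appending one new letter to a word $W=w_1\cdots w_h$ interacts with the maps. On the doubly enriched side, a single step sends $W$ to $w_2\cdots w_h i$ with probability $z_i$: this is a left-shift followed by appending a letter. So it suffices to track what $\phi$ and $\tilde\phi$ do under (a) deleting the first letter and (b) appending a letter on the right, and to check these are compatible with the corresponding operations ($\sigma\mapsto\sigma^\downarrow$ resp.\ the word-shift, then $S_i$ resp.\ $J_i$) on the target spaces.

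The key structural lemma I would isolate and prove first, by induction on the word length $j$, is the ``shift-equivariance'' statement
\begin{equation}
  \label{eq:shifteq}
  \phi(w_2\cdots w_j) = \phi(w_1\cdots w_j)^{\text{(shift)}},
  \qquad
  \tilde\phi(w_2\cdots w_j) = \tilde\phi(w_1\cdots w_j)^{\downarrow},
\end{equation}
where for a word $A=a_1\cdots a_j\in\mathrm{St}_j$ the ``shift'' operation produces $a_2\cdots a_j\circ\in\mathrm{St}_j$ (this is exactly the deterministic part of the MJMC move). Granting \eqref{eq:shifteq}, the definition \eqref{eq:anphidef}--\eqref{eq:anphitildef} gives, for the successor word $W'=w_2\cdots w_h i$,
\[
  \phi(W') = S_i\bigl(\phi(w_2\cdots w_h)\,\circ\bigr) = S_i\bigl(\text{shift}(\phi(W))\bigr),
\]
and similarly $\tilde\phi(W')=J_i(\tilde\phi(W)^{\downarrow})$. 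Comparing with \eqref{eq:anjmc} and \eqref{eq:anjmcplus}, and using that the image of $\tilde\phi$ on length-$h$ words lands in $\mathcal S(H)$ (which needs $\tilde\phi$ to never decrease below $1$ block and never exceed $H$ blocks — true since $J_i$ with $i$ out of range just adds or keeps a singleton), we see that the $\phi$-image (resp.\ $\tilde\phi$-image) of a $\hat P$-transition is precisely an edge of the annihilation (resp.\ enriched annihilation) chain, with the correct probability once we sum over the $i$'s producing the same target; here the compact ``$P_{A,B}=\sum z_i$'' form from the Remark after Definition~\ref{Def:ANJMC} makes the bookkeeping transparent. One also must check surjectivity of the two restricted maps, which follows because $\phi$ and $\tilde\phi$ are already surjective on their full domains (by the strong connectivity remark after \eqref{eq:mapcommut}, every state is reachable, hence realised as some $\phi(W)$) — or more simply by noting $\tilde\phi$ is essentially iterated application of the $J_i$'s used in Section~\ref{sec:enricheddef} to reach an arbitrary set partition.

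The routine verification of the intertwining then reads: for fixed $W$ and a target state $C$,
$\sum_{W'} \hat P_{W,W'}\,[\phi(W')=C] = \sum_{i:\,S_i(\mathrm{shift}(\phi(W)))=C} z_i = P_{\phi(W),C}$, which is the defining property of a projection; identically for $\tilde\phi$. I expect the main obstacle to be the base-case and index-bookkeeping in \eqref{eq:shifteq}: one must be careful that the right-to-left numbering conventions built into $S_i$ and $J_i$ (Notations~\ref{not:Si}, \ref{not:Ji}) are preserved under the shift, in particular that a $\circ$ appearing at the right end after shifting is counted consistently, and that the ``out of range $\Rightarrow$ identity / add singleton'' convention behaves well under iteration. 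Once \eqref{eq:shifteq} is nailed down, everything else is a one-line consequence of the inductive definitions and the already-established relation \eqref{eq:mapintertwin} $\psi(J_i(\tau))=S_i(\psi(\tau)\circ)$, which incidentally shows $\psi\circ\tilde\phi=\phi$, consistent with the whole tower of projections.
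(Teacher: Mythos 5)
Your proposal is correct and follows essentially the same route as the paper: the paper reduces the $\phi$ case to the $\tilde\phi$ case via $\phi=\psi\circ\tilde\phi$ and obtains your ``shift-equivariance'' lemma as an immediate induction from the commutation relation $J_i(\tau)^\downarrow=J_i(\tau^\downarrow)$ in \eqref{eq:mapcommut}, exactly the mechanism you identify. (Only cosmetic caveat: in your \eqref{eq:shifteq} the left side $\phi(w_2\cdots w_j)$ has length $j-1$ while the stated ``shift'' $a_2\cdots a_j\circ$ has length $j$; the correct statement, which your subsequent application actually uses, is that $\phi(w_2\cdots w_j)$ is $\phi(w_1\cdots w_j)$ with its first letter deleted.)
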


In other words, the projection $\phi$ (resp.\ $\tilde{\phi}$) consists
in applying the composition of the mappings $B \mapsto
S_{w_i}(B\circ)$ (resp.\ $\sigma \mapsto J_{w_i}(\sigma)$) with $i$
running from $1$ to $h$, to the ``seed'' $\emptyset$ (resp.\
$\{\{1\}\}$).

\begin{exmp}
 The transition matrix of
  the doubly enriched annihilation model for $h=2$ in the
 ordered basis $(11,21,31,12,22,32,13,23,33)$ is
  \begin{equation}
    \hat{P} = \left(
      \begin{array}{ccccccccc} 
      z_1 & 0 & 0 &z_2 & 0 & 0 & z_3 & 0  &0\\
     z_1 & 0 & 0 &z_2 & 0 & 0 & z_3 & 0  &0\\
     z_1 & 0 & 0 &z_2 & 0 & 0 & z_3 & 0  &0\\
      0 & z_1 & 0  & 0 & z_2 & 0 & 0 & z_3 & 0 \\
      0 & z_1 & 0  & 0 & z_2 & 0 & 0 & z_3 & 0 \\
      0 & z_1 & 0  & 0 & z_2 & 0 & 0 & z_3 & 0 \\
      0 & 0 & z_1 & 0 & 0 & z_2 & 0 & 0 & z_3\\
      0 & 0 & z_1 & 0 & 0 & z_2 & 0 & 0 & z_3\\
      0 & 0 & z_1 & 0 & 0 & z_2 & 0 & 0 & z_3\\
      \end{array} 
    \right).
  \end{equation}
  The Markov chains with transition matrices $\hat{P}$, $\tilde{P}$
  and $P$ for $h=2$ are displayed on Figure \ref{fig:markov_anni}.
  The words 12 and 13 project via $\tilde{\phi}$ to the set partition
  $3\,|\,1,2$ while 22 and 32 project to $1,3\,|\,2$, and all four project via
  $\phi$ to the juggling state $\bullet\circ$.
\end{exmp}

\begin{figure}[htpb]
  \centering
  \includegraphics[width=.7\textwidth]{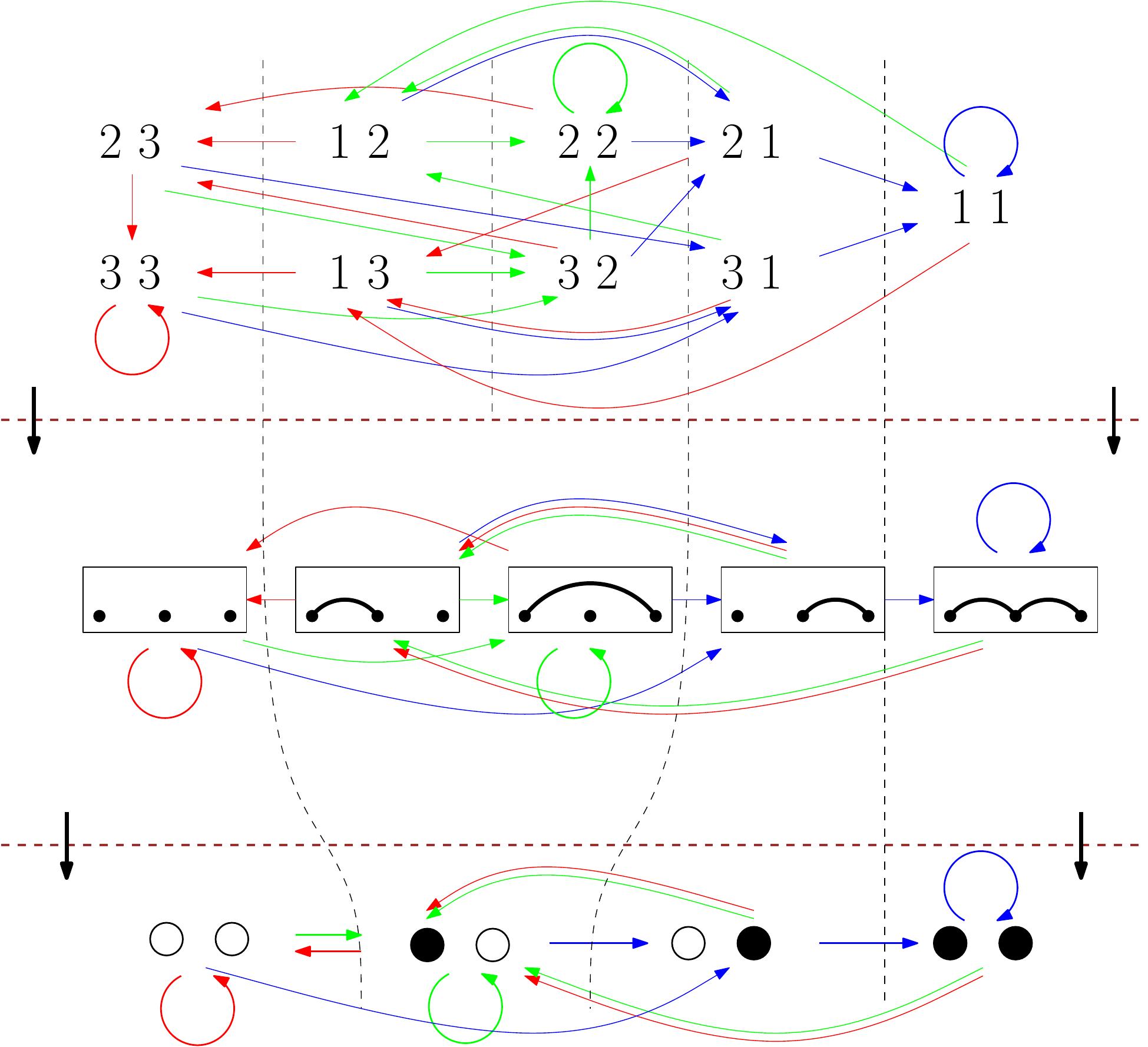}
  \caption{Transition graphs of the doubly enriched annihilation model
    (top), the enriched annihilation model (middle) and the
    annihiliation model (bottom) for $h=2$. Blue, green and red arrow
    represent transitions of respective probabilities $z_1,z_2,z_3$
    (when several arrows have the same endpoints, the corresponding
    probabilities should be added).}
  \label{fig:markov_anni}
\end{figure}

\begin{rem}
  The cardinality $h+1$ of the ``alphabet'' in the doubly enriched
  annihilation model is optimal in the sense that some states in
  $\mathrm{St}_h$ or $\mathcal{S}(h+1)$ are not attained with a
  smaller alphabet while, in a word of length $h$, all letters larger
  than $h+1$ can be replaced by $h+1$ without changing its image by
  $\phi$ or $\tilde{\phi}$. To be specific, if the alphabet has $\ell$
  letters with $\ell\le h$, the corresponding stationary probability
  of $B = b_1 \cdots b_h \in St_{h,k}$ in the annihilation model is
  \begin{equation}
    \Pi(B) = \prod_{\substack{i=1 \\
        b_i = \bullet}}^h 
    \left( z_1 + \cdots + z_{\psi_i(B)+1} \right)
    \prod_{j=1}^k (z_{j+1} + \cdots + z_\ell),
  \end{equation}
  if $k<\ell$, and 0 otherwise.  The corresponding stationary
  distribution of $\sigma \in \mathcal{S}(H,K)$ in the enriched
  annihilation model
  \begin{equation}
  \tilde{\Pi}(\sigma) =  \prod_{(s,t) \text{ arch of } \sigma} \!\! z_{C_\sigma(s,t)} 
    \prod_{i=1}^{K-1} (z_{i+1} + \cdots + z_{\ell})
  \end{equation}
  if $K\le \ell$, and 0 otherwise. When $\ell \geq h+1$, we might do
  the substitutions $z_{h+1} \leftarrow z_{h+1} + \cdots + z_{\ell}$ and
  $z_j \leftarrow 0$ for $j>h+1$ without affecting $\Pi$ and
  $\tilde{\Pi}$.
\end{rem}

\begin{proofof}{Theorem~\ref{thm:antrivproj}}
  It is not difficult to see that $\phi = \psi \circ \tilde{\phi}$
  from \eqref{eq:mapintertwin}; thus we need only check the statement
  for $\tilde{\phi}$. Consider the transition of probability $z_i$
  from $W=w_1 \cdots w_h$ to $W'= w_2 \ldots w_h i$ in the doubly
  enriched annihilation model: it is sufficient to have that
  $\tilde{\phi}(W)$ is sent to $\tilde{\phi}(W')$ by the corresponding
  transition of the enriched annihilation model, namely that
  \begin{equation}
    \label{eq:anwordphiWp}
    \tilde{\phi}(W') = J_i(\tilde{\phi}(W)^\downarrow).
  \end{equation}
  But this is an easy consequence from the commutation relation
  \eqref{eq:mapcommut} and the definition of $\tilde{\phi}$.
\end{proofof}

\begin{proofof}{Theorem~\ref{thm:prob-juggle-annihilation}}
  Let us first prove \eqref{eq:prob-juggle-annihilation-enriched}. We
  extend the definition $\eqref{eq:antristat}$ of $\tilde{\Pi}$ to
  words of arbitrary length by setting $\hat{\Pi}(w_1 \ldots w_j) =
  z_{w_1} \ldots z_{w_j}$. We will prove by induction on $j \leq h$
  that, for any $\sigma \in \mathcal{S}(j+1)$, we have
  \begin{equation}
    \label{eq:anrecsum}
    \sum_{W \in \tilde{\phi}^{-1}(\sigma)} \hat{\Pi}(W) =
    \prod_{(s,t) \text{ arch of } \sigma} \!\! z_{C_\sigma(s,t)} 
    \prod_{i=1}^{K-1} (z_{i+1} + \cdots + z_{h+1})
  \end{equation}
  with $K$ the number of blocks of $\sigma$. The relation is true for
  $j=0$ since both sides equal $1$. For $j \geq 1$, let $\sigma' \in
  \mathcal{S}(j)$ be the set partition obtained from $\sigma$ by
  removing the element $j+1$. The preimages by $\tilde{\phi}$ of
  $\sigma$ are then obtained from those of $\sigma'$ by
  \begin{itemize}
  \item appending a unique letter $w \leq K$ if $j+1$ is not a
    singleton in $\sigma$ (note that $\sigma$ has one arch more than
    $\sigma'$, and $w$ is the number of blocks that is covers),
  \item appending an arbitrary letter $w \geq K$ if $j+1$ is a
    singleton in $\sigma$ (note that $\sigma$ has one block more than
    $\sigma'$).
  \end{itemize}
  Then \eqref{eq:anrecsum} follows from the induction hypothesis: it
  is deduced from the relation for $\sigma'$ by adding an extra factor
  to the first product in the first case, and to the second product in
  the second case. We finally deduce
  \eqref{eq:prob-juggle-annihilation-enriched} by taking $j=h$ (recall
  that $H=h+1$ and $a=z_{h+1}$), noting that the left hand side of
  \eqref{eq:anrecsum} is nothing but the wanted $\tilde{\Pi}(\sigma)$.

  We then deduce \eqref{eq:prob-juggle-annihilation} from the relation
  $\phi=\psi \circ \tilde{\phi}$, using again Lemma~\ref{lem:preimag}.
\end{proofof}

Another nice property of the doubly enriched annihilation model 
which is easy to prove is that it is ``memoryless'': 
after $h$ transitions we end up with a
perfect sample of the stationary distribution, since all the initial
letters have been flushed out. 
Therefore, $h$ is a deterministic \emph{strong stationary time} of the doubly 
enriched chain \cite[Section 6.4]{LevinPeresWilmer}, which is independent
even of the initial distribution.
In other words, for any initial
probability distribution $\hat{\eta}$ over $\{1,\dots,h+1\}^h$, we
have
\begin{equation}
  \hat{\eta} \hat{P}^h = \hat{\Pi},
\end{equation}
and this implies that the only eigenvalues of the transition matrix
$\hat{P}$ are $1$ (with multiplicity $1$) and $0$. These properties
are clearly preserved by projection, which implies the following
nontrivial and remarkable property of the annihilation model, that
makes it distinct from the generic MJMC and add-drop models.

\begin{thm} \label{thm:spectrum-annihilation} For any initial
  probability distributions $\eta$ over $\mathrm{St}_{h}$ and
  $\tilde{\eta}$ over $\mathcal{S}(H)$, the distribution at time $h$
  is equal to the stationary distribution, namely
 \begin{equation}
   \label{eq:spectrum-annihilation}
   \eta P^h = \Pi, \qquad \tilde{\eta} \tilde{P}^h = \tilde{\Pi}.
 \end{equation}
 In particular, the only eigenvalues of $P$ and $\tilde{P}$ are $1$
 (with multiplicity $1$) and $0$.
\end{thm}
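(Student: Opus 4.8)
The plan is to derive both assertions from the ``memoryless'' identity $\hat\eta\,\hat P^h=\hat\Pi$ for the doubly enriched chain, pushed through the two projections $\phi$ and $\tilde\phi$ of Theorem~\ref{thm:antrivproj}. Let $\Phi$ and $\tilde\Phi$ be the $0/1$ matrices encoding $\phi$ and $\tilde\phi$ restricted to $\{1,\dots,h+1\}^h$ (rows indexed by length-$h$ words, columns by $\mathrm{St}_h$ resp.\ $\mathcal S(H)$). Being projections, they satisfy the intertwinings $\hat P\Phi=\Phi P$ and $\hat P\tilde\Phi=\tilde\Phi\tilde P$, and each of their rows sums to $1$.

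First I would treat $P$. Given an arbitrary law $\eta$ on $\mathrm{St}_h$, surjectivity of $\phi$ lets me pick a law $\hat\eta$ on $\{1,\dots,h+1\}^h$ with $\hat\eta\Phi=\eta$ (for instance, place the mass $\eta(B)$ on a single fixed preimage of each $B$). Iterating $\hat P\Phi=\Phi P$ gives $\Phi P^h=\hat P^h\Phi$, whence
\[
  \eta P^h=\hat\eta\,\Phi\,P^h=\hat\eta\,\hat P^h\,\Phi=\hat\Pi\,\Phi,
\]
the last equality being the memoryless property. The right-hand side is independent of $\eta$; specialising to $\eta=\Pi$ and using $\Pi P^h=\Pi$ (from Theorem~\ref{thm:prob-juggle-annihilation}) identifies it as $\Pi$, so $\eta P^h=\Pi$ for all $\eta$. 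Incidentally this reproves the formula for $\Pi$, since $\hat\Pi\Phi$ is exactly the fibrewise sum of $\hat\Pi$ already computed in the proof of Theorem~\ref{thm:prob-juggle-annihilation}. Replacing $(\Phi,P,\Pi)$ by $(\tilde\Phi,\tilde P,\tilde\Pi)$ throughout gives $\tilde\eta\,\tilde P^h=\tilde\Pi$.

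For the spectral statement I would then argue that $\eta P^h=\Pi$ for every $\eta$ means each row of $P^h$ equals $\Pi$, i.e.\ $P^h=\mathbf 1\,\Pi$ with $\mathbf 1$ the all-ones column vector; this matrix is idempotent of rank $1$ with $\operatorname{tr}(P^h)=\sum_B\Pi(B)=1$. Since $P\mathbf 1=\mathbf 1$, we get $P^{h+1}=P\,\mathbf 1\,\Pi=\mathbf 1\,\Pi=P^h$, so the minimal polynomial of $P$ divides $X^h(X-1)$ and hence the eigenvalues of $P$ lie in $\{0,1\}$. Counting with algebraic multiplicity, $\operatorname{tr}(P^h)$ then equals the multiplicity of the eigenvalue $1$, which is therefore $1$. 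The identical computation applies to $\tilde P$.

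The only step requiring real care is the transfer itself: one must note that an \emph{arbitrary} initial distribution on the base space is the image of \emph{some} distribution on $\{1,\dots,h+1\}^h$ (immediate from surjectivity of $\phi$ and $\tilde\phi$) and that the lumping relations $\hat P\Phi=\Phi P$, $\hat P\tilde\Phi=\tilde\Phi\tilde P$ may be legitimately iterated $h$ times. Everything beyond that point is formal manipulation.
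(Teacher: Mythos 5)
Your proof is correct and follows essentially the same route as the paper: it pushes the memoryless identity $\hat\eta\,\hat P^h=\hat\Pi$ for the doubly enriched chain through the projections of Theorem~\ref{thm:antrivproj}, which is exactly what the paper means by ``these properties are clearly preserved by projection.'' You merely make explicit the two points the paper leaves implicit — the surjectivity of $\phi$ and $\tilde\phi$ needed to lift an arbitrary initial law, and the deduction of the spectrum from $P^h=\mathbf 1\,\Pi$ — and both are handled correctly.
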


\begin{rem}
The fact that all other eigenvalues of the doubly enriched annihilation model are zero can also be proved directly from \cite[Theorem 12]{AySt2011} by setting $x_{a,m} = z_a$ so that $\beta_{a,m} = z_1 + \cdots + z_{h+1}$ for all $a \in [n], m \in [L]$ for words of length $L$ on an alphabet of size $n$. However, the fact that the distribution at time $L$ is the stationary distribution in general is not remarked in \cite{AySt2011}.
\end{rem}

This result is stronger than a statement about mixing times since we
reach the exact stationary state in bounded time! Another ``ultrafast
convergence to equilibrium'' was observed in \cite{LeskelaVarpanen}
for the model of Example~\ref{exmp:unbgeom}, but its combinatorial
origin seems rather different from that of the annihilation model (in
our model the time needed to reach the stationary distribution is
constant, while in Leskel\"a and Varpanen's model it depends on the
initial state, and can be arbitrarily large).

\section{Conclusion and discussion}
\label{sec:conclusion}
We end with speculative ideas for further work along the direction of
this paper.
All three annihilation Markov chains (on juggling sequences, on set partitions and on words) have the property that all the eigenvalues of the transition matrices are nonnegative and trivially linear in the parameters $z_i$. Such Markov chains typically arise from an underlying structure which we now describe. If one considers the matrices $P_i$ (resp. $\tilde{P}_i$) obtained by setting $z_i=1$ and $z_j=0$ for $j \neq i$ in the transition matrix $P$ (resp. $\tilde{P}$) in \eqref{eq:anjmc} (resp. \eqref{eq:anjmcplus}) 
then the
monoid generated by these matrices is 
$\mathscr R$-trivial. 
For a recent monograph on the connection between Markov chains and $\mathscr R$-trivial monoids, see \cite{asst2014}. One could reprove our statement about the eigenvalues in Theorem~\ref{thm:spectrum-annihilation} using this connection.
The general theory of free tree monoids described in \cite{asst2014}, however, does not directly apply to the annihilation chains because the generators $P_i$ (resp. $\tilde{P}_i$) do not always square to themselves (i.e. are not idempotents). It should be interesting to study when the results in \cite{asst2014} can be applied to situations when the generators are not idempotents.

This work suggests that it is worth studying a general framework for a class of problems in statistical physics, which we tentatively call ``boundary-driven Markov chains''. 
Just as the juggling and set-partition chains \eqref{eq:jugp} and \eqref{eq:jugp-enriched} studied in this paper, they have the property that the motion is deterministic for 
the most part (that is, in the bulk). Only when a ball (in the juggling context) reaches the boundary, i.e. Magnus' hand, something stochastic happens, namely the ball is thrown to a randomly chosen height. Similar things happen with the set-partition Markov chain. 

A natural idea is to combine the juggling process with an exclusion
process, for instance the Totally Asymmetric Exclusion Process (TASEP)
where particles do not all move at the same time but, instead, at each
time step one particle is selected randomly and moves if the next site
is empty. When a particle on the first site is selected, it is then
reinserted, say, uniformly at any available site on the lattice. This
model was investigated via a physical hydrodynamic approach in
\cite{AritaBouttierKrapivskyMallick}, where a nontrivial phase diagram
was found. It would be interesting to obtain more precise results on
this model, by looking for instance for a possible exact solution.

Finally, the fact that the stationary distributions of the Markov
chains considered in this paper all admit product forms is reminiscent
of the Zero-Range Process admitting a ``factorized stationary state''
\cite{EvansHanney,EvansMajumdar}, and one might wonder whether a
connection exists.

\noindent{\bf Acknowledgements:} The authors would like to thank N. Curien, P. Di Francesco, B. Haas, M. Josuat-Verg\`es and I. Kortchemski
for fruitful discussions during the completion of this work. The first author (A.A.) would like to acknowledge the hospitality during his stay at LIAFA, where this work was initiated.

\bibliographystyle{halpha}
\bibliography{juggle}

\end{document}